\theoremstyle{plain}
\newtheorem{theorem}{Theorem}[section]
\newtheorem{lemma}[theorem]{Lemma}
\newtheorem{proposition}[theorem]{Proposition}
\newtheorem{corollary}[theorem]{Corollary}
\numberwithin{equation}{section}
\theoremstyle{definition}
\newtheorem{definition}[theorem]{Definition}
\newtheorem{remark}[theorem]{Remark}
\newcommand{\norm}[1]{\lVert #1 \rVert}
\newcommand{\normm}[1]{\left\lVert #1 \right\rVert}
\def\q {\quad}
\def \l{\langle}
\def \r{\rangle}
\def\bb{\begin{equation}
  \left\{\ 
   \begin{aligned}}
\def\ee{   \end{aligned}
  \right.
  \end{equation}}
\def\mm{ \left[
 \begin{matrix}}
\def\nn{\end{matrix} \right] } 
\def\p{\partial}
\def \dd{\cdot}
\def \t{\times}
\def\n{\nu}
\def \w {\widetilde}
\def \h{\widehat}
\def\d{\delta}
\def \vp{\varphi}
\def \na{\nabla}
\def \ep{\varepsilon}
\def \lad{\lambda}
\def \Lad{\Lambda}
\def \cc{\mathfrak{C}}
\def \ddiv {{\rm div}}
\def \sdiv {\sdiv_S}
\def \NN{\mathbb{N}}
\def \R{\mathbb{R}}
\def \S{\mathbb{S}}
\def \AA{\mathbb{A}}
\def \P{\mathbb{P}}
\def \xx {\mathbb{X}}
\def \MM{\mathbb{M}}
\def \TT{\mathcal{T}}
\def \M{\mathcal{M}}
\def \L{\mathcal{L}}
\def \ce{\mathcal{CE}}
\def \ff{{\rm F}}
\def \ww{\omega}
\def \jj{\mathcal{J}}
\DeclareMathOperator{\Tan}{Tan}
\DeclareMathOperator{\dom}{dom}
\DeclareMathOperator{\tr}{Tr}
\def \mc {\mathcal}
\def \ran {{\rm Ran}}
\def \ker {{\rm Ker}}
\def \sym {{\rm sym}}
\def \ms{\mathsf}
\def \rd {{\rm d}}
\title{On a general matrix-valued unbalanced optimal transport problem}
\begin{document}
\author{}
\author{
Bowen Li\footnote{Department of Mathematics, Duke University, Durham, NC 27708, USA. (bowen.li200@duke.edu).}
\and Jun Zou\footnote{Department of Mathematics, The Chinese University of Hong Kong, Shatin, N.T., Hong Kong. (zou@math.cuhk.edu.hk).}
}
\date{}
\maketitle

\begin{abstract}
We introduce a general class of transport distances ${\rm WB}_{\Lad}$ over the space of positive semi-definite matrix-valued Radon measures $\mc{M}(\Omega,\S_+^n)$, called the weighted Wasserstein-Bures distance. Such a distance is defined via a generalized Benamou-Brenier
formulation with a weighted action functional and an abstract matricial continuity equation, which leads to a convex optimization problem. Some recently proposed models, including the Kantorovich-Bures distance and the Wasserstein-Fisher-Rao distance, can naturally fit into ours. We give a complete characterization of the minimizer and explore the topological and geometrical properties of the space $(\mc{M}(\Omega,\S_+^n),{\rm WB}_{\Lad})$. In particular, we show that $(\mc{M}(\Omega,\S_+^n),{\rm WB}_{\Lad})$ is a complete geodesic space and exhibits a conic structure.
\end{abstract}

\section{Introduction}

\noindent
\textbf{Classical optimal transport.} Optimal transport (OT) \cite{villani2003topics, villani2008optimal, santambrogio2015optimal} provides a versatile framework for defining metrics and studying geometric structures on probability measures. It has been an active research area over the past decades with fruitful applications in various areas, including functional inequalities \cite{otto2000generalization,lott2009ricci,sturm2006geometry}, gradient flow \cite{otto2001geometry,jordan1998variational}, and more recently, image processing and machine learning \cite{ferradans2014regularized,frogner2015learning,arjovsky2017wasserstein}. The OT problem was first proposed by Monge in 1781 \cite{monge1781memoire}:  given probabilities $\rho_0$ and $\rho_1$, find a measure-preserving transport map $T$ 
minimizing
\begin{align} \label{basicot0}
    \min_{T_{\#}\rho_0 = \rho_1} \int |x - T(x)|^2\, {\rm d} \rho_0(x)\,.
\end{align} 
However, its solution (i.e., the optimal transport map) may not exist. 
This question remained open for a long time until 1942 when Kantorovich introduced a relaxed problem based on the so-called transport plans \cite{kantorovich1942translocation}:
\begin{align}\label{basicot}
    {\rm W}_2^2(\rho_0,\rho_1) := \min \Big\{\int |x-y|^2\, {\rm d}\gamma\,;\  \gamma \ \text{is a  probability with}\ (\pi_\#^x \gamma, \pi_\#^y \gamma) = (\rho_0,\rho_1)\Big\}\,,
\end{align}
where $\pi_\#^x \gamma$ and $\pi_\#^y \gamma$ are the first and second marginals of $\gamma$, respectively. The $2$-Wasserstein distance \eqref{basicot} turns out to exhibit intriguing mathematical properties. Brenier \cite{brenier1991polar} proved that under mild conditions, the optimal transport map $T$ to \eqref{basicot0} exists and is uniquely given by the gradient of a convex function $\vp$. Thanks to the measure-preserving property of the transport map $T = \na \vp$, it is easy to see that $\vp$ satisfies the Monge-Amp\`{e}re equation, which provides a PDE-based approach for solving the OT problem \eqref{basicot0}. One can also show that $({\rm id}, \na \vp)_\# \rho_0$ gives a minimizer to \eqref{basicot}. Equipped with the distance ${\rm W}_2(\dd,\dd)$, the probability measure space becomes a geodesic space, where the geodesic is characterized by McCann’s displacement interpolation
$\rho_t:= ((1-t)I + t \na \vp)_{\#}\rho_0$  \cite{mccann1997convexity}. 
In Benamou and Brenier's seminal work \cite{benamou2000computational}, an equivalent fluid mechanics formulation 
 was proposed for computational purposes: 
\begin{equation} \tag{$\mathcal{P}_{{\rm W}_2}$}    \label{prob:BB_2}
  {\rm W}_2^2(\rho_0,\rho_1) =  \min_{\rho,m}\big\{ \frac{1}{2}\iint \rho^{-1}|m|^2\, {\rm d}t\, {\rm d}x\,; \ \p_t \rho + \ddiv\, m = 0 \big\}\,.
\end{equation}
This dynamic point of view has since stimulated numerous follow-up studies, including the present work. We refer the interested readers to \cite{villani2003topics, villani2008optimal} for the precise statements of aforementioned results and a detailed overview. 

\medskip 

\noindent
\textbf{Unbalanced optimal transport.}  
Although the OT theory has become a popular tool in learning theory and data science for its geometric nature and capacity for large-scale simulation, a limitation is that the associated metric is only defined for measures of equal mass, while in many applications, it is more desirable to allow measures with different masses. This leads to the problem of extending the classical OT theory to the unbalanced case. The early effort in this direction may date back to the works \cite{kantorovich1957functional,kantorovich1958space} by Kantorovich and Rubinshtein in the 1950s, where a simple static formulation with an extended Kantorovich norm was introduced. The underlying idea is to allow the mass to be sent to (or come from) a point at infinity, which was further investigated and extended in \cite{hanin1999extension,guittet2002extended}. Similarly, Figalli and Gigli \cite{figalli2010new} introduced an unbalanced transportation distance via a variant of Kantorovich formulation \eqref{basicot} by allowing taking the mass from (or giving it back to) the boundary of the domain. Another closely related  approach is the optimal partial transport \cite{figalli2010optimal,caffarelli2010free}, which is also based on \eqref{basicot} but involves a relaxed constraint $(\pi_\#^x \gamma, \pi_\#^y \gamma) \le (\rho_0,\rho_1)$ and a shifted cost $|x-y|^2-\alpha$. 

In addition to the static models, there is a large number of works devoted to defining an unbalanced OT model via a dynamic formulation in the spirit of
\cite{benamou2000computational}; see for example \cite{lombardi2015eulerian,maas2015generalized,benamou2003numerical,chizat2018interpolating,piccoli2016properties}. In these works, a source term and a corresponding penalization term are introduced in the continuity equation and the action functional, respectively, in order to model the mass change. In particular, Piccoli and Rossi \cite{piccoli2014generalized,piccoli2016properties} defined a generalized Wasserstein distance by relaxing the marginal constraint $(\pi_\#^x \gamma, \pi_\#^y \gamma) = (\rho_0,\rho_1)$ by a total variation regularization, which turns out to be equivalent to the optimal partial transport in certain scenarios \cite{chizat2018interpolating}. Moreover, an equivalent dynamic formulation has also been given in \cite{piccoli2016properties}. Later, a new transport model, called the Wasserstein-Fisher-Rao (WFR) or Hellinger-Kantorovich distance (in this work we adopt the former one), was introduced independently and almost simultaneously by three research groups with different perspectives and techniques \cite{chizat2018interpolating, liero2016optimal,
kondratyev2016new}. This model can be regarded as an inf-convolution of the 
Wasserstein and Fisher-Rao metric tensors, as the name suggests. In their subsequent work \cite{chizat2018unbalanced}, Chizat et al. presented a class of unbalanced transport distances in a unified framework via both static and dynamic formulations, thanks to the notions of semi-couplings and Lagrangians. Meanwhile,  Liero et al. \cite{liero2018optimal} proposed a related optimal entropy-transport approach and discussed its properties in detail.
 It was proved that both the optimal partial transport and the WFR distance can be viewed as the special cases of the general frameworks in  \cite{chizat2018unbalanced,liero2018optimal}. After that, the unbalanced OT  theory is further developed in various directions, such as gradient flows \cite{kondratyev2019spherical,kondratyev2020nonlinear}, Sobolev inequalities \cite{kondratyev2020convex}, and the JKO scheme \cite{gallouet2017jko,fleissner2021minimizing}. We also want to mention a recent work \cite{lombardini2022obstructions} by Lombardini and Rossi, which gave a negative answer to an interesting question of whether it is possible to define an unbalanced transport distance that coincides with the Wasserstein one when the measures are of equal mass.

\medskip

\noindent \textbf{Noncommutative optimal transport.} More recently, there is also an increasing interest in generalizing the OT theory to the noncommutative setting, namely, the quantum states or matrix-valued measures. The first line of research is motivated by the ergodicity of open quantum dynamics \cite{gross1975hypercontractivity,kastoryano2013quantum,olkiewicz1999hypercontractivity}. In the seminar works \cite{carlen2014analog,carlen2017gradient} by Carlen and Maas, a quantum Wasserstein distance was introduced with a Benamou-Brenier dynamic formulation such that the primitive quantum Markov semigroup satisfying the detailed balance condition can be formulated as the gradient flow of the logarithmic relative entropy, which opens the door to investigating the noncommutative functional inequalities via the gradient flow techniques and 
the geodesic convexity; see for example \cite{rouze2019concentration,datta2020relating,li2022interpolation,wirth2021curvature}. Meanwhile, Golse et al. proposed another quantum transport model via a generalized Monge-Kantorovich formulation, when they studied the 
mean-field and classical limits of the Schr\"{o}dinger equation; see\cite{golse2016mean,golse2017schrodinger,golse2018wave}. Other static quantum Wasserstein distances can be found in \cite{de2021quantum,de2021quantum2,cole2023quantum}, just to name a few. 

The second research line is driven by the advances in diffusion tensor imaging \cite{le2014diffusion,wandell2016clarifying}, where a tensor field (usually, a positive semi-definite matrix) is generated at each spatial position to encode the local diffusivity of water molecules in the brain. It gives rise to a natural question of how to compare
two brain tensor fields, or mathematically how to define a reasonable distance between matrix-valued measures. 
Chen et al. \cite{chen2017matrix, chen2020matrix} introduced a dynamic matricial Wasserstein distance for matrix-valued densities with unit mass, drawing inspiration from \cite{benamou2000computational} and leveraging the Lindblad equation in quantum mechanics, which was later extended to the unbalanced case \cite{chen2019interpolation} in a manner similar to \cite{chizat2018interpolating}. 
In particular,
Brenier and Vorotnikov \cite{brenier2020optimal} recently proposed a different dynamic 
OT model for unbalanced matrix-valued measures called the Kantorovich-Bures metric, which is motivated by the observation in \cite{brenier2018initial} that the incompressible Euler equation admits a dual concave maximization problem. Regarding static formulations, Peyr\'{e} et al. \cite{peyre2019quantum} introduced a quantum transport distance with entropic regularization inspired by \cite{liero2018optimal} and proposed an associated scaling algorithm that generalized the results in \cite{chizat2018scaling}. Additionally, Ryu et al. defined a matrix optimal transport model of order $1$ by a Beckmann-type flux formulation and presented a scalable and parallelizable numerical method. Applications in tensor field imaging were also explored in \cite{peyre2019quantum,ryu2018vector}.

\medskip

\noindent \textbf{Contribution.} 
The initial motivation for this work is the numerical study of the unbalanced matricial OT models proposed in \cite{chen2019interpolation,brenier2020optimal}; see \eqref{def:kan_bure} and \eqref{def:chen19int}. We find that despite their distinct formulations, these models actually share many mathematical properties. In this work, we consider an abstract continuity equation 
$\p_t \ms{G} + \ms{D} \ms{q} = \ms{R}^{{\sym}}$ in Definition \ref{def:abs_conti_eq} with $\ms{D}$ being a first-order constant coefficient linear differential operator such that $\ms{D}^*(I) = 0$, in analogy with the one $\p_t \ms{G} + 2 (\ms{L}^* \circ \ms{P})\, \ms{q} = 0$ for the matrix-valued optimal ballistic transport problem  (cf.\,\cite[(1.4)--(1.5)]{vorotnikov2022partial}). Here, $\ms{q}(t,x)$ can be intuitively seen as a momentum variable; $\ms{D}q$ is the matricial analog of 
the advection term $\ddiv\, m$ in \eqref{prob:BB_2} controlling the mass transportation in space and between components; $\ms{R}^{{\sym}}$ is the reaction part describing the variation of mass. Then, thanks to the weighted infinitesimal cost $J_\Lad(G_t, q_t, R_t) = 
\frac{1}{2} (q_t \Lad_1^\dag) \dd G_t^{\dag} (q_t \Lad_1^\dag) + \frac{1}{2} (R_t \Lad_2^\dag) \dd G^{\dag}_t (R_t \Lad_2^\dag)$ given in Proposition \ref{prop:subgrad_J} with the weighted matrices $\Lad_1$ and $\Lad_2$ representing the contributions of each component of $q$ and $G$ in $J_\Lad$, we define a general matrix-valued unbalanced OT distance ${\rm WB}_{\Lad}(\dd,\dd)$ \eqref{eq:distance} as a convex optimization, similarly to the classical case \eqref{prob:BB_2}, which we call the weighted Wasserstein-Bures distance; see Definition \ref{def:metric}. We note that the problems \eqref{def:kan_bure} and \eqref{def:chen19int}, as well as the scalar WFR distance \eqref{def:wfr_metric}, can be viewed as the special instances of our model \eqref{eq:distance}. See Section \ref{sec:example_model} for more details.

Our main contribution is a comprehensive and self-contained study of the properties of the weighted distance ${\rm WB}_{\Lad}$ on the positive semi-definite matrix-valued Radon measure space $\mc{M}(\Omega,\S_+^n)$. We establish the a priori estimates for solutions of the abstract continuity equation \eqref{eq:weak_ctneq} in Lemmas \ref{lem:com_kb_and_b}, \ref{lem:priorimu} and Proposition \ref{prop:disinte_path}, which consequently gives the well-posedness of the model \eqref{eq:distance} and a useful compactness result Proposition \ref{prop:compact}. Then, by leveraging tools from convex analysis, we show the existence of the minimizer (i.e., the minimizing geodesic) to \eqref{eq:distance} with a characterization of the optimality conditions; see Theorems \ref{thm:strong_dual} and \ref{thm:dual_minmax}. Moreover, we prove that the topology induced by  ${\rm WB}_\Lad(\dd,\dd)$ is stronger than the weak* one and study the limit model when a weighted matrix goes to zero; see Propositions \ref{prop:metric_prop} and \ref{prop:limitweight}, respectively. With the help of these results, in Theorem \ref{prop:for_geodesic} and Corollary \ref{cor:geo_me}, 
we characterize the absolutely continuous curve with respect to the 
metric ${\rm WB}_\Lad$ and show that $(\mc{M}(\Omega, \S_+^n), {\rm WB}_\Lad)$ is a complete geodesic space. We further consider its conic structure and prove in Theorem \ref{thmcone} 
that the space $(\mc{M}(\Omega, \S_+^n), {\rm WB}_\Lad)$ is a metric cone over $(\mc{M}_1, {\rm SWB}_\Lad)$, where $\M_1$ is a normalized matrix-valued measure space \eqref{def:normeasure}, which corresponds to a noncommutative probability space, 
and ${\rm SWB}_\Lad$ is a spherical distance \eqref{def:sphdist} induced by ${\rm WB}_\Lad$. Recalling the Riemannian interpretation in Corollary \ref{coro:riemann}, we can formally view $(\mc{M}(\Omega, \S_+^n), {\rm WB}_\Lad)$ as a Riemannian manifold and $\mc{M}_1$ as its submanifold with the induced metric ${\rm SWB}_\Lad$, which allows developing the Otto calculus in the spirit of \cite{otto2000generalization}. These results can be readily applied to the models \eqref{def:kan_bure} and \eqref{def:chen19int}, which lay a solid mathematical foundation for
the distance \eqref{def:chen19int} and complement the results in \cite{brenier2020optimal} for \eqref{def:kan_bure} (note that our approach is quite different from theirs).

In the companion work \cite{li2020general2}, we have designed a convergent discretization scheme for the general model \eqref{eq:distance}, which directly applies to the Kantorovich-Bures distance \eqref{def:kan_bure} \cite{brenier2020optimal}, the matricial interpolation distance \eqref{def:chen19int} \cite{chen2019interpolation}, and the WFR metric \eqref{def:wfr_metric} \cite{chizat2018interpolating}, thanks to the discussion in Section \ref{sec:example_model} of the present work. 

\medskip
\noindent \textbf{Layout.} The rest of this work is organized as follows. In Section \ref{sec:notation}, we give a list of basic notations that will be used throughout this work and recall some preliminary results. In Section \ref{sec:basicdef}, we define a class of weighted Wasserstein-Bures distances for matrix-valued measures via a dynamic formulation. Sections \ref{sec:existenandopt}
and \ref{sec:Riema_iter} are devoted to its topological, metric, and geometric properties, while in Section \ref{seccone}, we discuss its conic structure.
In Section \ref{sec:example_model}, we connect our general model with several existing models in the literature. Some auxiliary proofs are included in Appendix \ref{app_B}.

\section{Preliminaries and Notation} \label{sec:notation} 

\noindent \textbf{Notation and convention.}
\begin{itemize}
 \setlength\itemsep{-0.1mm}
    \item We denote by $\R^{n \t m}$ the space of $n \t m$ real matrices. If $m = n$, we simply write it as $\MM^{n}$. Moreover, we use $\S^n$, $\S_+^n$, and $\S^n_{++}$ to denote symmetric matrices, positive semi-definite matrices, and positive definite matrices, respectively. $\AA^n$ denotes the space of $n \t n$ antisymmetric matrices. 
    \item We denote by $|\dd|$ the Euclidean norm on $\R^n$.
    We equip the matrix space $\R^{n \t m}$ with the Frobenius inner product $A \dd B = \tr(A^{\rm T} B)$ and the associated norm  $\norm{A}_\ff = \sqrt{ A \dd A}$. 
    \item The symmetric and antisymmetric parts of $A \in \MM^n$ are given by 
    \begin{equation} \label{eq1}
        A^{\sym} = (A + A^{\rm T})/2\,,\q A^{{\rm ant}} = (A - A^{\rm T})/2\,,
    \end{equation}
    respectively. We also write $A \preceq B$ (resp., $A \prec B$) for $A, B \in \S^n$ if $B - A \in \S^n_+$ (resp., $B - A \in \S^n_{++}$). 
    \item $\mathcal{X}$ denotes a generic compact separable metric space with Borel $\sigma$-algebra $\mathscr{B}(\mathcal{X})$, unless otherwise specified.
    \item $C(\mathcal{X},\R^n)$ denotes the space of $\R^n$-valued continuous functions on $\mathcal{X}$ with the supremum norm  $\norm{\dd}_\infty$. Its dual space, denoted by $\M(\mathcal{X},\R^n)$, is $\R^n$-valued Radon measure space with the total variation norm $\norm{\dd}_{\rm TV}$.
\item Let $\mc{B}$ be a Banach space with the dual space $\mc{B}^*$. We denote by $\l \dd, \dd \r_{\mc{B}}$ the duality pairing between $\mc{B}$ and $\mc{B}^*$. When $\mc{B} = C(\mathcal{X},\R^n)$, we usually write it as $\l \dd, \dd \r_{\mc{X}}$ for short. We will also consider the weak and weak* convergences on $\mc{B}$ and $\mc{B}^*$, respectively. In particular, a sequence of measures $\{\mu_j\}$ weak* converges to $\mu \in \M(\mathcal{X},\R^n)$ if for any $\phi \in C(\mathcal{X},\R^n)$, there holds 
    $\l\mu_j,  \phi\r_{\mathcal{X}} \to \l\mu, \phi\r_{\mathcal{X}}$ as $j \to +\infty$.  
\item Let $\R_+: = [0,\infty)$, and $\M(\mathcal{X},\R_+)$ be the space of nonnegative finite Radon measures. 
For $\mu \in \M(\mathcal{X},\R^n)$, we have  
an associated variation measure $|\mu|\in \M(\mathcal{X},\R_+)$ such that ${\rm d} \mu = \sigma {\rm d} |\mu|$ with $|\sigma(x)| = 1$ for $|\mu|$-a.e.\,$x \in \mathcal{X}$, where $\sigma: \mathcal{X} \to \R^n$ is 
the Radon-Nikodym derivative (density) of $\mu$ with respect to $|\mu|$ \cite{evans2015measure,rudin2006real}. 
\item We identify the space of matrix-valued Radon measures  $\M(\mathcal{X},\R^{n \t m})$ with 
$\M(\mathcal{X},\R^{nm})$ by vectorization. It is easy to see that both sets of $\S^n$-valued Radon measures $\M(\mathcal{X},\S^n)$ and $\S^n_+$-valued Radon measures $\M(\mathcal{X},\S_+^n)$ are closed in $\M(\mathcal{X},\MM^n)$  with respect to the weak* topology \cite[Theorem 3.5]{duran1997lpspace}.  Moreover, we have the following characterization: 
\begin{equation*}
    (C(\mathcal{X}, \S^n))^* \simeq  (C(\mathcal{X},\MM^n) /C(\mathcal{X}, \AA^n))^* \simeq \M(\mathcal{X}, \S^n)\,,
 \end{equation*}
where $\simeq$ means the isometric isomorphism and $C(\mathcal{X},\MM^n) /C(\mathcal{X}, \AA^n)$ is the quotient space. Indeed, 
we observe that $\mu \in \M(\mathcal{X},\S^n) \subset \M(\mathcal{X},\MM^n) \simeq C(\mathcal{X},\MM^n)^* $ if and only if its induced linear functional on $C(\mathcal{X},\MM^n)$ has the kernel $C(\mathcal{X}, \AA^n)$, which yields, by \cite[Proposition 11.9]{brezis2010functional}, 
 \begin{equation*}
    (C(\mathcal{X},\MM^n) /C(\mathcal{X}, \AA^n))^* \simeq \M(\mathcal{X}, \S^n)\,.
\end{equation*}
Meanwhile, $C(\mathcal{X}, \S^n) \simeq  C(\mathcal{X},\MM^n) /C(\mathcal{X}, \AA^n)$ is a consequence of $\S^n \perp \AA^n$ and $\S^n \simeq \MM^n/\AA^n$.

\item For $\mu \in \M(\mathcal{X}, \S_+^n)$, we define an associated trace measure
$\tr\mu$ by the set function $E \to \tr (\mu (E))$, $E \in \mathscr{B}(\mathcal{X})$. It is clear that $ 0 \preceq \mu(E) \preceq \tr (\mu (E)) I$ and $ \tr\mu$ is equivalent to
$|\mu|$, denoted by $\tr\mu \sim |\mu|$. That is, 
\begin{equation} \label{eq:trtotal}
   |\mu| \ll \tr\mu \q \text{and} \q \tr\mu \ll |\mu|\,.
\end{equation} 
We will usually use $\tr\mu$ as the dominant measure for $\mu \in \M(\mathcal{X},\S_+^n)$. In addition, note that for $\lad \in \M(\mathcal{X}, \R_+)$ with $|\mu| \ll \lad$, there holds $\frac{\rd \mu}{\rd \lad} \in \S^n_+$ for $\lad$-a.e.\,$x \in \mathcal{X}$, which is an equivalent characterization of $\M(\mathcal{X}, \S_+^n)$.

\item We will use sans serif letterforms to denote vector-valued or matrix-valued measures, e.g., $\ms{A} \in \M(\mathcal{X}, \MM^n)$, while letters with serifs are reserved for their densities with respect to some reference measure, e.g., $A_\lad: = \frac{\rd \ms{A}}{\rd \lad}$ for $|\ms{A}| \ll \lad$. The symmetric and antisymmetric parts $\ms{A}^\sym$ and $\ms{A}^{\rm ant}$ of $\ms{A} \in \M(\mathcal{X}, \MM^n)$ are defined as in \eqref{eq1}. 

\item We identify a measure and its density with respect to the Lebesgue measure (if exists) unless otherwise specified. 

 \item  
 For $\lad \in \M(\mathcal{X}, \R_+)$, we denote by $L^p_\lad (\mathcal{X},\R^n)$ with $p \in [1, +\infty]$ the standard space of $p$-integrable $\R^n$-valued functions. For $\ms{G} \in \M(\mathcal{X}, \S_+^n)$, we consider the space of $\R^{n \times m}$-valued measurable functions endowed with the semi-inner product: 
\begin{equation}  \label{eq2}
    \l P, Q \r_{L^2_{\ms{G}}(\mathcal{X})} := \l \ms{G} , QP^{\rm T} \r_\mathcal{X} = \int_\mathcal{X}  P \dd (\rd \ms{G}\, Q) = \int_\mathcal{X}  P \dd \big(G_\lad Q \big)\, \rd \lad\,, 
\end{equation} 
where $\lad$ is a reference measure such that $|\ms{G}|\ll \lad$ and $G_\lad$ is the density. 
Noting that $\norm{Q}_{L^2_{\ms{G}}(\mathcal{X})} = 0$ is equivalent to $G_\lad Q  = 0$  for $\lad$-a.e.\,$x \in \mathcal{X}$, the kernel of the seminorm $\norm{\dd}_{L^2_{\ms{G}}(\mathcal{X})}$ is given by 
$\{Q\,;\ \ran(Q) \in \ker(G_\lad)\,, ~\lad\text{-a.e.}\}$.  
Then, we define the Hilbert space $L^2_{\ms{G}}(\mathcal{X}, \R^{n \times m})$ as the quotient space by $\ker\big(\norm{\dd}_{L^2_{\ms{G}}(\mathcal{X})}\big)$. 
\end{itemize}

\medskip

\noindent \textbf{Preliminaries.}
We denote by $A^\dag \in \R^{m \t n}$ the pseudoinverse of a matrix $A \in \R^{n \t m}$. If $A \in \S^n$ has the eigendecomposition $A = O \Sigma O^{\rm T}$, then $A^\dag = O \Sigma^\dag O^{\rm T}$ with $\Sigma^\dag = \text{diag}(\lad_1^{-1}, \ldots, \lad_s^{-1},0, \ldots,0)$, where $O$ is an orthogonal matrix and $\Sigma = \text{diag}(\lad_1,\ldots, \lad_s,0, \ldots,0)$
is a diagonal matrix with $\{\lad_i\}$ being nonzero eigenvalues of $A$. 

\begin{lemma} \label{lem:useprop} 
    The following properties hold:
    \begin{enumerate}
    \item If $A \succeq B \succeq 0$ and $\ran(A) = \ran(B)$, then $B^{\dag} \succeq A^{\dag}$. 
    \item The cone $\S^n_+$ in $\S^n$ is self-dual, that is, $(\S_+^n)^* := \{B\in\S^n\,; \ \tr(AB) \ge 0\,,\ \forall A \in \S^n_+ \} = \S^n_+$.
    \item  If $A, B \succeq 0$ and $A \dd B = 0$, then  $\ran B \subset \ker A$, equivalently, $\ran A \subset \ker B$. 
    \item For $A \in \S_+^n, M \in \R^{n \t m}$, there holds 
    \begin{equation} \label{eq:useineq}
        (A M) \dd M  \le \tr(A)\norm{M}_\ff^2\,.
    \end{equation}
\end{enumerate}
\end{lemma}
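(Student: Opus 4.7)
The four assertions are classical facts; I would prove them in the order listed, reusing conclusions from earlier parts in later ones where convenient.

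For (1), the plan is to reduce to the invertible case on a common subspace. Since $\ran(A)=\ran(B)$, let $V$ denote this subspace and $P$ the orthogonal projection onto $V$. Both $A$ and $B$ restrict to positive-definite operators on $V$, and on the orthogonal complement both vanish. I would then invoke the standard fact that $X\mapsto X^{-1}$ is operator antitone on $\S^n_{++}$, applied to the restrictions $A|_V \succeq B|_V \succ 0$, to conclude $(B|_V)^{-1}\succeq (A|_V)^{-1}$. Identifying these restricted inverses with the pseudoinverses $B^\dag,A^\dag$ (which vanish on $V^\perp$) gives $B^\dag \succeq A^\dag$. The only subtle point is that the hypothesis $\ran(A)=\ran(B)$ is essential: without it the restrictions do not live on a common space.

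For (2), the self-duality of $\S^n_+$, I would use the spectral decomposition $A=\sum_i \lad_i v_i v_i^{\rm T}$ with $\lad_i\ge 0$ to write $\tr(AB)=\sum_i \lad_i\, v_i^{\rm T}Bv_i$. One direction ($\S^n_+\subset (\S^n_+)^*$) then follows immediately whenever $B\succeq 0$. For the converse, given any $B\in(\S_+^n)^*$ and any unit vector $v$, take $A=vv^{\rm T}\in\S^n_+$; the defining inequality becomes $v^{\rm T}Bv=\tr(AB)\ge 0$, whence $B\succeq 0$.

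For (3), I would take $A^{1/2}$ and $B^{1/2}$ (the unique PSD square roots) and rewrite $0 = A\dd B = \tr(AB) = \tr(A^{1/2}BA^{1/2}) = |B^{1/2}A^{1/2}|^2$. Hence $B^{1/2}A^{1/2}=0$, which gives $\ran A = \ran A^{1/2}\subset \ker B^{1/2}=\ker B$, equivalently $\ran B\subset \ker A$ by symmetry of the conclusion under swapping $A$ and $B$.

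For (4), I split $M$ into columns $M=(m_1,\ldots,m_m)$, so $AM\dd M = \sum_{j=1}^m m_j^{\rm T}Am_j$. Since $A\succeq 0$, each term is bounded by $\|A\|_{\rm op}\,|m_j|^2\le \tr(A)\,|m_j|^2$, using that the operator norm is dominated by the trace on $\S^n_+$. Summing over $j$ yields the desired inequality $AM\dd M\le \tr(A)|M|^2$.

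None of these steps is a serious obstacle; the most delicate is (1), where one must be careful that the $\ran(A)=\ran(B)$ hypothesis is exactly what allows the argument to pass from restricted inverses back to pseudoinverses on the whole space.
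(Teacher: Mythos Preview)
Your proposal is correct. For items (2)--(4) the paper simply declares them ``direct'' and gives no argument, so your short proofs are perfectly adequate there. The only substantive comparison is for item (1), where the paper takes a different route: it conjugates $A\succeq B$ by $\sqrt{B}^\dag$ to obtain $\sqrt{B}^\dag A\sqrt{B}^\dag \succeq \P_B$, then uses the spectral identity $\sigma(\sqrt{B}^\dag A\sqrt{B}^\dag)=\sigma(\sqrt{A}B^\dag\sqrt{A})$ (nonzero eigenvalues of a product are invariant under cyclic permutation) together with $\ran(A)=\ran(B)$ to conclude $\sqrt{A}B^\dag\sqrt{A}\succeq \P_A$, and finishes by conjugating with $\sqrt{A}^\dag$. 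Your approach---restrict to the common range $V=\ran(A)=\ran(B)$, invoke operator antitonicity of the inverse on $\S_{++}(V)$, and identify the restricted inverses with the pseudoinverses---is more elementary and transparent; it makes the role of the range hypothesis explicit without any spectral machinery. The paper's argument, on the other hand, avoids appealing to the operator-monotonicity of $X\mapsto X^{-1}$ as a black box.
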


\begin{remark}
The range condition $\ran(A) = \ran(B)$ for the first statement in Lemma \ref{lem:useprop} above
is necessary, due to the example $A = \text{diag}(1,1,1,0)$ and $B = \text{diag}(1,1,0,0)$. Moreover, we remark that for $\ms{G} \in \M(\mathcal{X}, \S_+^n)$, 
there holds $L_{\tr \ms{G}}^2(\mathcal{X}, \R^n) \subset L_{\ms{G}}^2(\mathcal{X}, \R^n)$ by 
\eqref{eq:useineq}, while the converse is not true; see \cite{duran1997lpspace} for the counterexample. 
\end{remark}

\begin{proof}
We only prove the first statement, as the others are direct. We first note that the orthogonal projection onto 
$\ran(A) = \ran(B)$ is given by $\P = \sqrt{B}^\dag B \sqrt{B}^\dag = \sqrt{A}^\dag A \sqrt{A}^\dag $. By $A - B \succeq 0$, we also have $\sqrt{B}^\dag A \sqrt{B}^\dag - \P \succeq 0$, which means that all 
the eigenvalues of the matrix $\sqrt{B}^\dag A \sqrt{B}^\dag$ restricted on its invariant subspace $\ran(A) = \ran(B)$ is greater than or 
equal to one. It is easy to see that $\sqrt{B}^\dag A \sqrt{B}^\dag$ and $\sqrt{A} B^\dag \sqrt{A}$ have the same eigenvalues. Hence, we 
find $\sqrt{A} B^\dag \sqrt{A} - \P \succeq 0$, which gives $B^\dag \succeq A^\dag$ by conjugating with  $\sqrt{A}^\dag$. 
\end{proof}

The next lemma is about the measurability of matrix-valued functions. 

\begin{lemma}\label{prop:measmatfuns}
    Let $A(x)$ be a $\S^n$-valued Borel measurable function on $\mathcal{X}$. Then, it holds that 
    \begin{enumerate}
        \item The eigenvalues $\{\lad_{A,i}(x)\}^n_{i = 1}$ of $A(x)$ in nondecreasing order are measurable, and the corresponding eigenvectors $\{u_{A,i}(x)\}^n_{i=1}$ can also be selected to be measurable and form an orthonormal basis of $\R^n$ for every $x \in \mathcal{X}$. 
         \item The pseudoinverse $A^\dag(x)$ of $A(x)$ is measurable, and the square root  $A^{1/2}(x)$ of $A(x) \in \S^n_+$ is measurable.  
    \end{enumerate}
\end{lemma}

\noindent The first and second properties are from  
\cite{reid1970some} and \cite{robertson1968decomposition} with the continuity of 
$A^{1/2}$ in  $A \in \S^n_+$, 
respectively.  In fact, Powers-St{\o}rmer inequality \cite{powers1970free} gives 
\begin{align} \label{est:lip_squre}
    \big \lVert\sqrt{A} - \sqrt{B}\,\big\rVert_\ff^2 \le \sqrt{n} \norm{A - B}_\ff\,, \q \forall A,B \in \S^n_+\,.
\end{align}

We finally recall some concepts and useful results from convex analysis.
Let $f: X \to \R \cup \{+ \infty\}$ be an extended real-valued function on a Banach space $X$. We denote by $\p f(x)$ its subgradient at $x \in X$ and by $\dom(f) := f^{-1}(\R)$ its domain. We say that $f$ is proper if $\dom(f) \neq \varnothing$; and that  $f$ is positively homogeneous of degree $k$ if for all $x \in X$ and $\alpha > 0$, $f(\alpha x) = \alpha^k f(x)$. 
The conjugate function $f^*$ of $f$ is defined by 
\begin{equation} \label{def:conjugate}
    f^*(x^*) = \sup_{x \in X} \l x^*, x\r_X - f(x)\,, \q \forall x^* \in X^*\,,
\end{equation}
which is convex and lower semicontinuous with respect to the weak* topology of $X^*$. The following two lemmas are from \cite[Proposition 2.33]{barbu2012convexity} and \cite[Proposition 2.5]{bouchitte2020convex}, respectively. 

\begin{lemma}[Subgradient] \label{lem:subgrad_relation}
Let $f:X \to \R \cup \{+\infty\}$ be a proper convex function on a Banach space $X$. Then, the following three properties are equivalent: {\rm (i)} $x^* \in \p f(x)$; {\rm (ii)} $f(x) + f^*(x^*) =  \l x^*,x\r_X$; {\rm (iii)} $f(x) + f^*(x^*) \le \l x^*,x\r_X$.   In addition, if $f$ is lower semicontinuous, then all of these properties are equivalent to  $x \in \p f^*(x^*)$. 
\end{lemma}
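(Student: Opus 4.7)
The plan is to establish the equivalence among (i), (ii), (iii) by a short cyclic chain $\text{(i)} \Rightarrow \text{(ii)} \Rightarrow \text{(iii)} \Rightarrow \text{(i)}$, and then to obtain the fourth equivalence under the lower-semicontinuity hypothesis by applying the already-proven equivalences to the conjugate $f^{*}$ together with the Fenchel--Moreau biconjugation theorem.

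For $\text{(i)} \Rightarrow \text{(ii)}$, I would start from the subgradient inequality $f(y) \geq f(x) + \langle x^{*}, y - x\rangle$ valid for all $y \in X$, rearranged as $\langle x^{*}, y\rangle - f(y) \leq \langle x^{*}, x\rangle - f(x)$. Taking the supremum over $y \in X$ in the definition \eqref{def:conjugate} of $f^{*}$ immediately gives $f^{*}(x^{*}) \leq \langle x^{*}, x\rangle - f(x)$. The reverse inequality, i.e.\ the Fenchel--Young inequality $f(x) + f^{*}(x^{*}) \geq \langle x^{*}, x\rangle$, is obtained simply by evaluating the supremum that defines $f^{*}$ at the particular point $y = x$. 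Together, these two inequalities force equality, which is (ii).

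The implication $\text{(ii)} \Rightarrow \text{(iii)}$ is tautological. For $\text{(iii)} \Rightarrow \text{(i)}$, I would assume $f(x) + f^{*}(x^{*}) \leq \langle x^{*}, x\rangle$ and use once more that $f^{*}(x^{*}) \geq \langle x^{*}, y\rangle - f(y)$ for every $y \in X$ by the very definition of the conjugate, so that
\begin{equation*}
    f(x) + \langle x^{*}, y\rangle - f(y) \leq f(x) + f^{*}(x^{*}) \leq \langle x^{*}, x\rangle\,,
\end{equation*}
which rearranges exactly to the subgradient inequality $f(y) \geq f(x) + \langle x^{*}, y - x\rangle$ for every $y \in X$, i.e.\ $x^{*} \in \partial f(x)$.

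To handle the last equivalence when $f$ is additionally lower-semicontinuous, I plan to invoke the Fenchel--Moreau theorem, which under propriety, convexity, and l.s.c.\ guarantees $f^{**} = f$ on $X$. Since $f^{*}: X^{*} \to \R \cup \{+\infty\}$ is itself proper, convex, and weak-star lower-semicontinuous, the already-established equivalence $\text{(i)} \Leftrightarrow \text{(ii)}$ applied to $f^{*}$ at the pair $(x^{*}, x)$ yields
\begin{equation*}
    x \in \partial f^{*}(x^{*}) \iff f^{*}(x^{*}) + f^{**}(x) = \langle x^{*}, x\rangle\,,
\end{equation*}
and, under $f^{**} = f$, this coincides exactly with (ii). I do not anticipate any real obstacle in this argument; the only delicate point is the invocation of Fenchel--Moreau, which requires precisely the three standing hypotheses (proper, convex, l.s.c.) available in the Banach-space setting, so that $f^{**}$ is truly $f$ rather than merely its l.s.c.\ envelope.
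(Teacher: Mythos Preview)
Your argument is correct and is the standard textbook proof of this classical fact. The paper, however, does not give its own proof of this lemma: it is listed among the preliminaries and simply cited from \cite{barbu2012convexity,bouchitte2020convex}, so there is nothing to compare against beyond noting that your cyclic chain $\text{(i)}\Rightarrow\text{(ii)}\Rightarrow\text{(iii)}\Rightarrow\text{(i)}$ together with Fenchel--Moreau for the last equivalence is exactly the approach found in those references.
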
 

\begin{lemma}[Fenchel--Rockafellar duality] \label{prop:fench_rock}
 Let $X$ and $Y$ be two Banach spaces and 
 $L: X \to Y$ be a bounded linear operator 
with the adjoint $L^*: Y^* \to X^*$. Let $f$ and $g$ be two proper lower semicontinuous convex functions defined on $X$ and $Y$ valued in 
$\R \cup \{+\infty\}$, respectively. If there exists $x \in \dom (f)$ such that  $g$ is continuous at $Lx$,  then
    \begin{equation} \label{eq:infsup}
        \sup_{x \in X} - f(-x) - g(Lx) = \inf_{y^* \in Y^*}f^*(L^*y^*) + g^*(y^*)\,,
    \end{equation}
    and the $\inf$ in \eqref{eq:infsup} can be attained. Moreover, the $\sup$ in \eqref{eq:infsup} is attained at $x \in X$ if and only if  there exists a $y^* \in Y^*$ such that $L x \in \p g^*(y^*)$ and $L^* y^* \in \p f(-x)$, in which case $y^*$ also
    achieves the $\inf$ in \eqref{eq:infsup}. 
 \end{lemma}

\section{Definition and basic properties} \label{sec:basicdef}

 We shall introduce a new family of distances on the matrix-valued Radon measure space $\M(\Omega, \S^n_+)$ based on a dynamic optimal transport (OT) formulation, which will be the central object of this work. 

 \medskip

\noindent \textbf{Action functional.} To define our dynamic OT model over the space of $\S^n_+$-valued measures, the starting point is a weighted action functional. Let $n, k, m \in \NN$ be positive integers and $\Lad := (\Lad_1,\Lad_2)$ be a pair of matrices with $\Lad_1 \in \S^k_+$ and $\Lad_2 \in \S_+^m$. We define the following closed convex set:
\begin{align} \label{eq:closed_convex_set}
   \mathcal{O}_\Lad=  \Big\{(A,B,C) \in  \S^n \t \R^{n \t k} \t \R^{n \t m}\,;\    A + \frac{1}{2} B \Lad^2_1 B^{\rm T} + \frac{1}{2} C \Lad^2_2 C^{\rm T}  \preceq 0 \Big\}\,.
\end{align}
Note that its characteristic function:
\begin{equation*}
\iota_{\mathcal{O}_\Lad} := \begin{cases}
    0, & (A,B,C) \in \mathcal{O}_\Lad\,,\\
+ \infty, & (A,B,C) \notin \mathcal{O}_\Lad\,,\\
\end{cases}    
\end{equation*}
is proper 
lower semicontinuous and convex \cite[Lemma 1.24]{bauschke2011convex}. We denote by $J_\Lad$ 
the conjugate function \eqref{def:conjugate} of $\iota_{\mathcal{O}_\Lad}$ and derive the explicit expressions for $J_\Lad$ and its subgradient $\p J_\Lad$.  

\begin{proposition} \label{prop:subgrad_J} 
$J_\Lad$ is proper, positively homogeneous of degree one, lower semicontinuous, and convex with the following representation:
\begin{equation} \label{eq:expre_J}
    J_\Lad(X, Y, Z) = 
         \frac{1}{2}  (Y \Lad_1^\dag) \dd (X^{\dag} Y \Lad_1^\dag) 
          + \frac{1}{2} (Z \Lad_2^\dag) \dd (X^{\dag} Z \Lad_2^\dag)\,,
\end{equation}
if $X \in \S_+^n$, $\ran (Y^{\rm T})  \subset \ran ( \Lad_1)$, $\ran (Z^{\rm T})  \subset \ran (\Lad_2)$ and $\ran ([Y,Z]) \subset \ran(X)$; otherwise $J_\Lad(X, Y, Z) = +\infty$. 
Moreover, the subgradient of $J_\Lad$ at $(X,Y, Z) \in \dom (J_\Lad)$ 
is characterized by
\begin{equation} \label{eq:chacsubgradjal}
    \p J_\Lad (X,Y, Z) =  \Big\{(A,B,C) \in \mathcal{O}_\Lad\,; \ Y =  X B \Lad^2_1\,, \ Z = X C \Lad^2_2\,, \   X \dd \Big(A + \frac{1}{2} B \Lad_1^2 B^{{\rm T}} + \frac{1}{2} C \Lad^2_2 C^{{\rm T}} \Big) = 0 \Big\}\,.
\end{equation} 
$\p J_\Lad (X, Y, Z)$ is a singleton if and only if 
$(X, Y, Z) \in \S^n_{++} \t \R^{n \t k} \t \R^{n \t m}$ and $\Lad_1 \in \S_{++}^k$, $\Lad_2 \in \S_{++}^m$.   
\end{proposition}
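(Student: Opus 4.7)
The plan is to exploit that $J_\Lad = (\iota_{\mathcal{O}_\Lad})^*$ is the support function of the closed convex set $\mathcal{O}_\Lad$, whence properness (since $(0,0,0)\in\mathcal{O}_\Lad$ gives $J_\Lad\ge 0$ and $J_\Lad(0,0,0)=0$), convexity, lower-semicontinuity, and positive homogeneity of degree $1$ are immediate. The substantive tasks are then the explicit formula \eqref{eq:expre_J} and the subgradient description \eqref{eq:chacsubgradjal}.

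For \eqref{eq:expre_J}, I first rule out finite values of $J_\Lad(X,Y,Z)$ when any listed range condition fails. If $X\notin\S_+^n$, take $B=C=0$ and $A = -tuu^{\rm T}$ with $u$ a unit eigenvector of $X$ for a negative eigenvalue, making $A\dd X\to+\infty$. If $\ran(Y^{\rm T})\not\subset\ran(\Lad_1)$, pick $v\in\ker\Lad_1$ with $Yv\ne 0$ (which exists by taking orthogonal complements), and set $B_t = t(Yv)v^{\rm T}$, $A=C=0$; then $B_t\Lad_1^2 B_t^{\rm T}=0$ gives feasibility while $B_t\dd Y = t|Yv|^2\to+\infty$. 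If $\ran Y\not\subset\ran X$, decompose a column $y=Ye_j$ of $Y$ as $y_1+y_2$ with $y_1\in\ran X$ and $y_2\in\ker X\setminus\{0\}$; then $B_t = ty_2(Y^{\rm T} y_2)^{\rm T}$ satisfies $XB_t=0$ and $B_t\dd Y = t|Y^{\rm T} y_2|^2\to+\infty$, where $Y^{\rm T} y_2\ne 0$ because $y_2\dd y=|y_2|^2>0$. The conditions on $Z$ and $\Lad_2$ are handled symmetrically.

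When all range conditions hold, the constraint combined with $X\succeq 0$ and self-duality of $\S_+^n$ (Lemma \ref{lem:useprop}(2)) yields $A\dd X \le -\frac{1}{2}X\dd(B\Lad_1^2 B^{\rm T} + C\Lad_2^2 C^{\rm T})$, and the problem decouples into two independent concave quadratic maximizations in $B$ and $C$. The first-order equation $XB\Lad_1^2 = Y$ is solved by $B^* = X^\dag Y(\Lad_1^\dag)^2$; substituting and using $XX^\dag Y = Y$ together with $Y\Lad_1^\dag\Lad_1 = Y$ (both consequences of the range inclusions) produces the $B$-contribution $\frac{1}{2}(Y\Lad_1^\dag)\dd X^\dag(Y\Lad_1^\dag)$. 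The parallel computation for $C$ and the choice $A^* = -\frac{1}{2}(B^*\Lad_1^2(B^*)^{\rm T} + C^*\Lad_2^2(C^*)^{\rm T})$ assemble into \eqref{eq:expre_J} and show the supremum is attained.

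For \eqref{eq:chacsubgradjal}, Lemma \ref{lem:subgrad_relation} reduces the task to identifying the feasible $(A,B,C)\in\mathcal{O}_\Lad$ at which the supremum is attained. Tracing equality through the chain of inequalities above gives precisely two conditions: (i) $X\dd(A + \frac{1}{2}B\Lad_1^2 B^{\rm T} + \frac{1}{2}C\Lad_2^2 C^{\rm T}) = 0$, which by Lemma \ref{lem:useprop}(3) is the complementarity between $X$ and $-(A+\frac{1}{2}(\cdots))$, and (ii) the optimality equations $Y = XB\Lad_1^2$, $Z = XC\Lad_2^2$. For the singleton claim, if $X$, $\Lad_1$, $\Lad_2$ are all strictly positive definite then (ii) uniquely determines $B,C$, and $X\succ 0$ combined with (i) forces $A+\frac{1}{2}(\cdots)=0$, which pins down $A$. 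Conversely, a singular $\Lad_1$ admits the perturbation $B\mapsto B+uw^{\rm T}$ with $w\in\ker\Lad_1\setminus\{0\}$, which preserves every condition in \eqref{eq:chacsubgradjal} and changes $B$; a singular $\Lad_2$ is treated analogously, while a singular $X$ with $u\in\ker X\setminus\{0\}$ admits $B\mapsto B+uv^{\rm T}$ for any $v$, after a compensating adjustment $A\mapsto A-\frac{1}{2}[(B+uv^{\rm T})\Lad_1^2(B+uv^{\rm T})^{\rm T} - B\Lad_1^2 B^{\rm T}]$ that leaves both $\mathcal{O}_\Lad$-membership and (i) intact. The main technical subtlety throughout is the pseudoinverse bookkeeping linking the range conditions to the identities $XX^\dag Y = Y$ and $Y\Lad_1^\dag\Lad_1 = Y$; the converse direction of the singleton characterization further requires care to ensure the perturbed triples remain feasible.
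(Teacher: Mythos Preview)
Your approach mirrors the paper's almost exactly: both treat $J_\Lad$ as the support function of $\mathcal{O}_\Lad$, dispose of the infeasible cases by exhibiting rays along which the objective diverges, and handle the feasible case by reducing to a concave quadratic in $(B,C)$ (the paper completes the square, you invoke first-order conditions---equivalent here). Your subgradient argument via Lemma~\ref{lem:subgrad_relation} is identical to the paper's, and your converse for the singleton claim is in fact more explicit than what the paper writes.

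There is one genuine omission. In the case $\ran Y\not\subset\ran X$, you carry over $A=0$ from the previous case, but then $(0,B_t,0)\in\mathcal{O}_\Lad$ would force $B_t\Lad_1^2B_t^{\rm T}=0$, which fails in general since $B_t\Lad_1^2B_t^{\rm T}=t^2|\Lad_1 Y^{\rm T}y_2|^2\,y_2y_2^{\rm T}$. The fix is exactly what the paper does: take the compensating $A_t=-\tfrac12 B_t\Lad_1^2B_t^{\rm T}$, and then your observation $XB_t=0$ is precisely what makes $X\dd A_t=0$, so the objective is still $t|Y^{\rm T}y_2|^2\to+\infty$. You clearly had this in mind (otherwise there is no reason to record $XB_t=0$), but it should be stated.
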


\begin{proof} 
The properties of $J_\Lad$ are by \cite[Proposition 14.11]{bauschke2011convex}. To derive the formula \eqref{eq:expre_J}, by definition, we have
    \begin{align} \label{def:jalpha}
        J_\Lad(X, Y, Z) &= \sup_{(A,B,C) \in  \mathcal{O}_\Lad}  X \dd A  +  Y \dd B  +  Z \dd C \,,
    \end{align} 
    for $(X, Y, Z) \in  \S^n \t \R^{n \t k} \t \R^{n \t m}$. 
We consider the following four cases. 

\smallskip
\noindent \emph{Case I}: $X \in \S^n\backslash \S_+^n$. We choose a vector $a \in \R^n$ such that $\l a, X a \r < 0$ and set $A = - \lad a a^{\rm T}\preceq 0 $ with $\lad > 0$, $B = 0$, and $C = 0$  in \eqref{def:jalpha}. Then it follows that 
    \begin{equation*}
        J_\Lad(X, Y, Z) \ge \sup_{\lad > 0}  X \dd (- \lad a a^{\rm T}) = + \infty\,.
    \end{equation*}

\noindent \emph{Case II}: $\ran (Y^{\rm T}) \not \subset \ran (\Lad_1)$ or $\ran (Z^{\rm T}) \not \subset \ran (\Lad_2)$. It suffices to consider the case $\ran (Y^{\rm T}) \not \subset \ran (\Lad_1)$, since the same argument applies to the other one. Without loss of generality, we let $Y = [y_1,\ldots,y_n]^{\rm T}$ with $y_i \in \R^k$ and $y_1 \notin \ran (\Lad_1)$. Thanks to $\Lad_1 \in \S^k_+$, $y_1$ has the orthogonal decomposition:
\begin{equation*}
    y_1 = y_1^{(1)} + y^{(2)}_1 \q  \text{with}\ y^{(1)}_1 \in \ran(\Lad_1)\,,\  y^{(2)}_1 \neq 0 \in \ker (\Lad_1)\,.
\end{equation*}
Taking $A = 0$, $B = \lad \big[y_1^{(2)},0\big]^{\rm T}$ with $\lad \in \R$, and $C = 0$ in  \eqref{def:jalpha}, we have 
\begin{equation*}
    J_\Lad(X, Y, Z) \ge \sup_{\lad > 0} \lad \big| y_1^{(2)} \big|^2  = + \infty\,.
\end{equation*} 

\noindent \emph{Case III}:
 $\ran ([Y, Z]) \not \subset \ran(X)$. It suffices to consider $\ran( Y ) \not \subset \ran (X)$. 
We take $(A,B,C)$ in \eqref{def:jalpha} as:
\begin{align*}
    A = - \frac{\lad^2}{2} (\P_{\ker(X)} Y \Lad_1) (\P_{\ker (X)} Y \Lad_1)^{\rm T}\,, \ B = \lad \P_{\ker(X)} Y\,,\  C = 0\,,
\end{align*}
with $\lad > 0$, where $\P_{\ker(X)}: = I - X^\dag X$ is the orthogonal projection onto $\ker(X)$. A direct computation gives
\begin{align*}
    J_\Lad(X,Y, Z) &\ge \sup_{(A,B,0) \in  \mathcal{O}_\Lad}  X \dd  A  +  Y \dd B   \\
    & \ge \sup_{\lad > 0} - \frac{ \lad^2}{2} (\P_{\ker(X)} Y \Lad_1)\dd (X \P_{\ker(X)} Y \Lad_1) + \lad  Y \dd ( \P_{\ker(X)} Y) \\
    & \ge \sup_{\lad > 0} \lad  (\P_{\ker(X)} Y) \dd (\P_{\ker(X)} Y) = + \infty\,,
\end{align*}
since there holds $( \P_{\ker(X)} Y) \dd (\P_{\ker(X)} Y) > 0$ by
$\ran( Y ) \not \subset \ran (X)$.

\smallskip
\noindent \emph{Case IV}: $(X, Y,Z) \in \S_+^{n} \t \R^{n \t k} \t \R^{n \t m}$ with $\ran (Y^{\rm T})  \subset \ran (\Lad_1)$, $\ran (Z^{\rm T})  \subset \ran (\Lad_2)$ and $\ran ([Y, Z]) \subset \ran(X)$.  For this case, we directly compute
\small
  \begin{align} \label{auxcal_1}
        X \dd A +  Y \dd B  +  Z \dd C  = &  X \dd \Big(A + \frac{1}{2} B \Lad^2_1 B^{\rm T}  + \frac{1}{2} C \Lad^2_2 C^{\rm T} \Big) + Y \dd  B  +  Z \dd C  -  X  \dd \Big( \frac{1}{2} B \Lad_1^2 B^{{\rm T}} + \frac{1}{2} C \Lad^2_2 C^{\rm T} \Big)\,,
    \end{align}
\normalsize
and
\small
\begin{align} \label{auxcal_2}
     Y \dd B  +  Z \dd  C  -  \frac{1}{2} X \dd \big(B \Lad_1^2 B^{{\rm T}} +  C \Lad^2_2 C^{\rm T} \big) = & - \frac{1}{2} \Big \lVert \sqrt{X}B \Lad_1 - \sqrt{X}^{\dag} Y \Lad_1^\dag \Big \lVert_\ff^2 - \frac{1}{2} \Big \lVert\sqrt{X}C \Lad_2 - \sqrt{X}^{\dag} Z \Lad_2^\dag \Big \lVert_\ff^2 \notag \\
        & + \frac{1}{2 } \Big \lVert \sqrt{X}^{\dag} Y \Lad_1^\dag \Big \lVert_\ff^2 + \frac{1}{2} \Big \lVert\sqrt{X}^{\dag} Z \Lad_2^\dag \Big \lVert_\ff^2\,,
\end{align}
    \normalsize
    where we have used 
    $$  Y \dd B  +  Z \dd C  = \big (\sqrt{X} \sqrt{X}^\dag Y \Lad_1^\dag \Lad_1 \big) \dd  B  + \big (\sqrt{X} \sqrt{X}^\dag Z \Lad_2^\dag \Lad_2 \big) \dd  C  \,,$$
by the range relations: $\ran (Y^{\rm T})  \subset \ran (\Lad_1)$, $\ran (Z^{\rm T})  \subset \ran (\Lad_2)$, and $\ran ([Y,Z]) \subset \ran(X)$. Also, by \eqref{eq:closed_convex_set}, we have $ X \dd \big(A + \frac{1}{2} B \Lad_1^2 B^{{\rm T}} + \frac{1}{2} C \Lad_2^2 C^{\rm T} \big) \le 0$. Hence, by \eqref{auxcal_1} and \eqref{auxcal_2}, the maximizers to 
  \eqref{def:jalpha} are given by the set 
  \begin{equation} \label{auxeq_2}
    \Big\{(A,B,C) \in \mathcal{O}_\Lad\,; \ Y = X B \Lad^2_1 \,, \ Z = X C \Lad^2_2\,, \   X \dd \Big(A + \frac{1}{2} B \Lad_1^2 B^{{\rm T}} + \frac{1}{2} C \Lad^2_2 C^{\rm T} \Big) = 0 \Big\}\,,
   \end{equation}
   and the corresponding supremum is \eqref{eq:expre_J}.

Finally, to characterize the subgradient of $J_\Lad$, by Lemma \ref{lem:subgrad_relation}, we have that 
$(A,B,C) \in \p J_\Lad(X,Y,Z)$ if and only if $(A,B,C) \in \mathcal{O}_\Lad$ and  
 $ J_\Lad(X,Y,Z) =  X \dd A +  Y \dd B  +  Z \dd C$ holds. Then, \eqref{eq:chacsubgradjal} readily follows from the above argument. For the last statement, we note that $\p J_\Lad (X,Y,Z)$  is a singleton if and only if the equations in \eqref{eq:chacsubgradjal} for $(A,B,C)$ are uniquely solvable, which is equivalent to $\Lad_1 \in \S_{++}^k$, $\Lad_2 \in \S_{++}^m$ and $X \in \S_{++}^n$. 
\end{proof}

In the following discussion, we assume $m = n$ and $\Lad_2 \in \S^n_{++}$; see Remark \ref{rem:bures}. Now, 
for a given triplet of measures $\mu: = \ms{(G,q, R)} \in  \M(\mathcal{X}, \S^n \t \R^{n \t k}  \t \MM^n)$, we define a positive measure $\jj_{\Lad} (\mu)$ on $\mathcal{X}$ by 
\begin{equation} \label{def:costmeasure}
\jj_{\Lad}(\mu)(E): = \int_E J_\Lad \left(\frac{\rd \mu}{\rd \lad}\right) \rd \lad\,,
\end{equation}
for a measurable set $E \in \mathscr{B}(\mathcal{X})$, where $\lad \in \M(\mathcal{X},\R_+)$ is a reference measure such that $|\mu| \ll \lad$. Thanks to the positive homogeneity of $J_\Lad$ by Proposition \ref{prop:subgrad_J}, 
the definition \eqref{def:costmeasure} of $\jj_{\Lad}$ is independent of the choice of $\lad$. To alleviate notations, we adopt the following conventions in the rest of this work. 

\begin{enumerate}[\textbullet] 
 \setlength\itemsep{-0.1mm}
    \item We define the space $\xx := \S^n \t \R^{n \t k}  \t \MM^n$ and then write $\M(\mathcal{X},\xx) = \M(\mathcal{X}, \S^n \t \R^{n \t k}  \t \MM^n) = C(\mathcal{X},\xx)^* $, where  $C(\mathcal{X},\xx) = C(\mathcal{X}, \S^n \t \R^{n \t k}  \t \MM^n)$. 
    \item We often write $\mu$ for  $\ms{(G,q,R)} \in \M(\mathcal{X},\xx)$ for short, which will be clear from the context.
    \item We write $\jj_{\Lad}(\mu)(E)$ as $\jj_{\Lad,E}(\mu)$ for short. Thus, $\jj_{\Lad,\mathcal{X}}(\mu)$ denotes the total measure $\jj_{\Lad}(\mu)(\mathcal{X})$. 
    \item We denote by $(G_\lad,q_\lad,R_\lad)$ the density of $\ms{(G,q,R)} \in \M(\mathcal{X},\xx)$ with respect to a reference measure $\lad \in \M(\mathcal{X},\R_+)$ such that $|\ms{(G,q,R)}| \ll \lad$. The subscript $\lad$ of $(G_\lad,q_\lad,R_\lad)$ will often be omitted for simplicity.  
    \item The generic positive constant $C$ involved in the estimates below may change from line to line. 
\end{enumerate}

\begin{definition} \label{def:actional_fun}
We define the \emph{$\Lad$-weighted action functional} for a measure $\mu \in \M(\mathcal{X}, \xx)$ by $\jj_{\Lad,\mathcal{X}}(\mu)$. 
\end{definition}

By Proposition \ref{prop:subgrad_J} and the formula \eqref{def:costmeasure},
we have the following useful lemma. 

\begin{lemma} \label{lem:fcabs}
    For $\mu = \ms{(G,q,R)} \in \M(\mathcal{X},\xx)$  with $\jj_{\Lad, \mathcal{X}}(\mu) < + \infty$,  we have  $\ms{G} \in \M(\mathcal{X},\S_+^n)$ and 
    $|(\ms{q}, \ms{R})| \ll \tr \ms{G}$ with 
\begin{equation} \label{eq:rela_range}  
        G_\lad \in \S_+^n\,,\  \ran\left([q_\lad, R_\lad ]\right) \subset  \ran \left(G_\lad\right)\,, \ \ran(q_\lad^{\rm T}) \subset \ran( \Lambda_1) \,, \ \ran(R_\lad^{\rm T}) \subset \ran(\Lambda_2)\,,\q \text{$\lad$-a.e.}\,.
\end{equation}
\end{lemma}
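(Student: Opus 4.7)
\emph{Plan.} The proof hinges almost entirely on the characterization of $\dom(J_\Lad)$ established in Proposition~\ref{prop:subgrad_J}. The strategy is to select a single reference measure that dominates every component of $\mu$, pass the finiteness of $\jj_{\Lad,\mathcal{X}}(\mu)$ down to a pointwise condition on the densities, and then read off all conclusions from the domain description.

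First, I would fix a concrete reference measure, for instance $\lad := \tr\ms{G}_+ + |\ms{G}|_- + |\ms{q}| + |\ms{R}|$ (where $\ms{G} = \ms{G}_+ - \ms{G}_-$ is a Jordan-type decomposition of the signed matrix-valued measure $\ms{G}$), so that $|\mu| \ll \lad$ and the triplet of densities $(G_\lad, q_\lad, R_\lad)$ is well defined. Since $\jj_{\Lad,\mathcal{X}}(\mu) = \int_\mathcal{X} J_\Lad(G_\lad, q_\lad, R_\lad)\,\rd\lad < +\infty$, the integrand is $\lad$-a.e.\ finite. Proposition~\ref{prop:subgrad_J} tells us exactly when $J_\Lad$ is finite, so we immediately get, for $\lad$-a.e.\ $x \in \mathcal{X}$,
\begin{equation*}
G_\lad(x) \in \S_+^n, \q \ran(q_\lad(x)^{\rm T}) \subset \ran(\Lad_1), \q \ran(R_\lad(x)^{\rm T}) \subset \ran(\Lad_2), \q \ran([q_\lad(x), R_\lad(x)]) \subset \ran(G_\lad(x)),
\end{equation*}
which gives all four pointwise conclusions of \eqref{eq:rela_range} for this particular $\lad$. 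The independence of the range conditions from the choice of reference measure then follows from the chain rule for Radon--Nikodym derivatives: any other admissible $\nu$ produces densities that differ from $(G_\lad, q_\lad, R_\lad)$ by multiplication with a common nonnegative scalar $\rd\lad / \rd\nu$, so ranges are preserved off a $\nu$-null set.

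Next, the assertion $\ms{G} \in \M(\mathcal{X}, \S_+^n)$ follows by integration: for any $E \in \mathscr{B}(\mathcal{X})$, $\ms{G}(E) = \int_E G_\lad\,\rd\lad$ is a limit of Riemann-type sums of elements of $\S_+^n$ scaled by nonnegative weights, hence lies in the closed cone $\S_+^n$.

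Finally, for the absolute continuity $|(\ms{q}, \ms{R})| \ll \tr\ms{G}$, suppose $E \in \mathscr{B}(\mathcal{X})$ with $\tr\ms{G}(E) = 0$, i.e.\ $\int_E \tr G_\lad\,\rd\lad = 0$. Since $G_\lad \succeq 0$ $\lad$-a.e., this forces $\tr G_\lad = 0$ and therefore $G_\lad = 0$ $\lad$-a.e.\ on $E$; the range inclusion $\ran([q_\lad, R_\lad]) \subset \ran(G_\lad)$ then yields $q_\lad = 0$ and $R_\lad = 0$ $\lad$-a.e.\ on $E$, so $\ms{q}(F) = \ms{R}(F) = 0$ for every Borel $F \subset E$, whence $|(\ms{q}, \ms{R})|(E) = 0$.

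The only mildly delicate point is making sure the pointwise range condition on a $\lad$-null set is actually compatible with the measure-theoretic absolute continuity against $\tr\ms{G}$ (rather than against $|\ms{G}|$ or $\lad$); this is handled by the equivalence $|\ms{G}| \sim \tr\ms{G}$ recalled in \eqref{eq:trtotal} together with the chain-rule argument above, so no substantive obstacle arises.
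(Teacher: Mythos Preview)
Your proposal is correct and follows essentially the same route as the paper: finiteness of the integral forces $\mu_\lad(x)\in\dom(J_\Lad)$ for $\lad$-a.e.\ $x$, and Proposition~\ref{prop:subgrad_J} then yields all of \eqref{eq:rela_range}; the absolute continuity $|(\ms{q},\ms{R})|\ll\tr\ms{G}$ is obtained by pushing the range inclusion through the vanishing of $G_\lad$ on any set where $\tr\ms{G}$ vanishes. Your treatment of the absolute continuity step is in fact slightly more direct than the paper's---the paper splits $E$ into $E_1=\{G_\lad\in\S_+^n\setminus\{0\}\}$ and $E_2=\{G_\lad=0\}$ and argues separately, whereas you observe in one stroke that a nonnegative integrand with zero integral vanishes $\lad$-a.e., which subsumes both cases.
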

\begin{proof}
By $\jj_{\Lad, \mathcal{X}}(\mu) = \int_{\mathcal{X}} J_\Lad(\mu_\lad)\, \rd \lad < + \infty$, $J_\Lad(\mu_\lad)$ is finite for $\lad$-a.e.\,$x 
\in \mathcal{X}$, where $\mu_\lad = (G_\lad,q_\lad,R_\lad)$. It means that 
$\mu_\lad(x) \in \dom(J_{\Lad})$ holds $\lad$-a.e., which immediately gives \eqref{eq:rela_range} by Proposition \ref{prop:subgrad_J}. We next show the absolute continuity of $|\ms{q}|$ and $|\ms{R}|$ with respect to
$\tr \ms{G}$, that is, for $E \in \mathscr{B}(\mathcal{X})$ with  $\tr \ms{G} (E) = 0$, we have $|\ms{q}|(E) = |\ms{R}|(E) = 0$. For this, we consider two measurable subsets $E_1$ and $E_2$ of $E$ with $E = E_1 \cup E_2$:
     \begin{equation*}
         E_1 = \left\{x \in E\,;\ G_\lad(x) \in \S_+^n\backslash \{0\}\right\},\q E_2 = \left\{x \in E\,;\ G_\lad(x) = 0\right\}.
     \end{equation*}
 By $\tr \ms{G}(E_1) = 0$ and $\tr G_\lad > 0$ on $E_1$ everywhere, we have  $\lad(E_1) = 0$. Then $|\ms{q}|(E_1) = 0$ and $|\ms{R}|(E_1) = 0$ follows from $|\ms{q}|, |\ms{R}| \ll \lad$.  Moreover, by \eqref{eq:rela_range} and $G_\lad = 0$ on $E_2$, we have
$q_\lad(x) = 0$ and $R_\lad(x) = 0$ for $\lad$-a.e.\,$x \in E_2$. Then it follows that $|\ms{q}|(E_2) = 0$ and $|\ms{R}|(E_2) = 0$. The proof is complete. 
\end{proof}

\medskip

\noindent \textbf{Continuity equation.} Another key ingredient for the dynamic 
OT formulation is a matricial continuity equation; see Definition \ref{def:abs_conti_eq} below. Let us fix more notations.
\begin{enumerate}[\textbullet] 
 \setlength\itemsep{-0.1mm}
\item Let $\Omega \subset \R^d$ be a compact set with a nonempty interior, a smooth boundary $\p \Omega$, and the exterior unit normal vector $\n$. We denote by $Q_a^b : = [a,b] \t \Omega \subset \R^{1 + d}$ with $b > a > 0$ the associated time-space domain. If $[a,b] = [0,1]$, we simply write it as $Q$.
\item For a function $\Phi(t,x)$ on $Q_a^b$, we write $\Phi_t(\dd) := \Phi(t,\dd)$ if we regard it as a family of functions $\{\Phi_t\}_{t \in [a,b]}$ in $x$. 
\item We denote by $\pi^t: (t,x) \to t$ the projection. 
We use the subscript $\#$ to denote the pushforward by a map. For instance, for a measure $\mu$ on $Q_a^b$, $\pi^t_\# \mu = \mu \circ (\pi^t)^{-1}$ is the pushforward measure on $[a,b]$.   
\item Let $X$ and $Y$ be two Banach spaces. We denote by $\mc{L}(X,Y)$ the space of continuous linear operators from $X$ to $Y$ (simply $\mc{L}(X)$ if $X = Y$) and by $C_c^\infty(\R^d,X)$ the $X$-valued smooth functions with compact support. We also need $C^k$-smooth functions $C^k(\Omega, X)$, where we assume that the derivatives exist in the interior of $\Omega$ and can be continuously extended to the boundary. The norm on $C^k(\Omega, X)$ is defined by $\norm{\Phi}_{k,\infty} := \sum_{|\alpha| \le k} \sup_{x \in \Omega} \norm{D^\alpha \Phi(x)}$. Other similar notations are interpreted accordingly. 
\item We recall the indicator function of a set $A$:
\begin{equation} \label{def:indicator}
    \chi_A(x) = \begin{cases}
        1, & \text{if}\ x \in A\,,\\
        0, & \text{if}\ x \notin A\,.
    \end{cases} 
\end{equation}
\item We use $\ \widehat{\dd}\ $ to denote the Fourier transform of a function, or the symbol of a constant coefficient linear differential operator. 
\end{enumerate}

 We consider a general first-order constant coefficient linear differential operator
 $\ms{D}^*:C_c^\infty(\R^d, \S^n) \to C_c^\infty(\R^d, \R^{n \t k})$ with $\ms{D}^*(I) = 0$. By Fourier transform, it can be characterized by 
\begin{equation} \label{eq:symdre}
    \ms{D}^* (\Phi)(x) = \int_{\R^d} \widehat{\ms{D}^*}(\xi)\big[\h{\Phi}(\xi)\big] e^{i \xi \dd x}\, \rd \xi \,, \q \Phi \in C_c^\infty(\R^d, \S^n)\,,
\end{equation}
where $\h{\Phi}(\xi) = (2 \pi)^{-d} \int_{\R^d} \Phi(x) e^{-i\xi \dd x} \,\rd x$ is the Fourier transform of $\Phi$ and $\widehat{\ms{D}^*}(\xi): \R^d \to 
\L(\S^n,\R^{n \t k})$ is the symbol of $\ms{D}^*$ satisfying that for any $A \in \S^n$ and $B \in \R^{n \t k}$, $B \dd \widehat{\ms{D}^*}(\xi)(A)$ is a first-order polynomial in $\xi$. We write $\widehat{\ms{D}^*}(\xi)$ as the sum of its homogeneous components: $\widehat{\ms{D}^*}(\xi) = \widehat{\ms{D}^*_0} + \widehat{\ms{D}^*_1}(\xi)$,  where $\widehat{\ms{D}^*_0}$ and $\widehat{\ms{D}^*_1}(\xi)$ are homogeneous of degree $0$ and $1$, respectively. 
Then, recalling that the Fourier transform of $I$ is $\d_0 I$,
it is easy to see that the condition $\ms{D}^*(I) = 0$ is equivalent to
$\widehat{\ms{D}^*}(0)(I) = \widehat{\ms{D}_0^*}(I) = 0$.

By abuse of notation, we also define $\ms{D}^*\Phi$ for functions $\Phi (t,x)$ on $\R^{1 + d}$ by acting $\ms{D}^*$ on the spatial variable $x$. Moreover, we define the operator $\ms{D}$ as the adjoint operator of $ - \ms{D}^*$ in the sense of distribution. For instance, if $\ms{D}^* = \na$, then $\ms{D} = \ddiv$; see Section \ref{sec:example_model} for more examples. The homogeneous parts of degree $0$ and $1$ of $\ms{D}$ are denoted by $\ms{D}_0$ and $\ms{D}_1$ with the associated symbols $\widehat{\ms{D}_0}$ and $\widehat{\ms{D}_1}$, respectively. 

\begin{definition} \label{def:abs_conti_eq}
A measure $\ms{G} \in \M(Q_a^b, \S^n)$ connects $\ms{G}_a, \ms{G}_b \in \M(\Omega, \S^n_+)$ over the time interval $[a,b]$, if there exists $\ms{(q, R)} \in \M(Q_a^b, \R^{n \t k}  \t \MM^n)$ satisfying the following general \emph{matrix-valued continuity equation}:
\begin{equation}\label{eq:weak_ctneq}
        \int_{Q_a^b} \p_t\Phi \dd \rd \ms{G} + \ms{D}^* \Phi  \dd  \rd \ms{q}  +  \Phi   \dd  \rd \ms{R} = \int_{\Omega}  \Phi_b  \dd \rd \ms{G}_b -  \int_{\Omega}  \Phi_a \dd \rd \ms{G}_a\,,\q \forall \Phi \in C^1(Q_a^b,\S^n)\,.
\end{equation} 
The measures $\ms{G}_a$ and $\ms{G}_b$ are referred to as the initial and final distributions of $\ms{G}$, respectively.  Moreover,  we denote by $\ce([a,b];\ms{G}_a, \ms{G}_b)$ the set of the measures $\ms{(G,q, R)} \in \M(Q_a^b,\xx)$ satisfying \eqref{eq:weak_ctneq}.
\end{definition}

\begin{remark}
It is easy to derive the distributional equation of \eqref{eq:weak_ctneq}: 
\begin{equation} \label{eq:abs_ctn_eq}
    \p_t \ms{G} + \ms{D}\ms{q} = \ms{R}^{{\rm sym}}\,,
\end{equation} 
with the measure $\ms{q}$ satisfying a homogeneous boundary condition on $\p \Omega$. Indeed, assume that $\ms{q}$ admits a smooth density $q$ with respect to the Lebesgue measure. Then, the Stokes' theorem gives 
\begin{equation*}
    \int_\Omega \ms{D} q \dd \Phi + q \dd \ms{D}^* \Phi \, \rd x = \int_{\p \Omega} q \dd \widehat{\ms{D}_1^*}(-i \n)(\Phi)  \,  \rd x = \int_{\p \Omega} \widehat{\ms{D}_1}(-i \n)(q) \dd \Phi  \, \rd x\,,  \q \forall \Phi \in C^1(\Omega, \S^n)\,.
\end{equation*}  
It follows that the boundary condition $\widehat{\ms{D}_1}(-i \n)(q) = 0$ holds for 
$\ms{q}$ satisfying \eqref{eq:weak_ctneq}.  In the case of $\ms{D} = \ddiv$, we see that $\widehat{\ms{D}_1}(-i \n)(q) = 0$ is the familiar no-flux boundary condition $\nu \dd q = 0$. Moreover, we shall see very soon that under very mild conditions, the temporal boundary condition for $\ms{G}$ is redundant; see Remark \ref{rem1}. 

\end{remark}

\begin{remark}
We give an intuitive interpretation of \eqref{eq:abs_ctn_eq} as a continuity equation. Recall the homogeneous parts $\ms{D}_0$ and $\ms{D}_1$ of $\ms{D}$ with $\ms{D}_0 \in \mc{L}(\R^{n \t k},\S^n)$
and $\ms{D}_1$ vanishing when acting on constant functions.  It allows us to split $\ms{D}\ms{q}$ into two parts: $\ms{D}_0\ms{q}$ and $\ms{D}_1\ms{q}$, where $\ms{D}_0\ms{q}$ and $\ms{D}_1\ms{q}$ describe the mass transportation between components of $\ms{G}$ and the transportation in space, respectively. Moreover, the condition $\ms{D}^*(I) = 0$ can be regarded as a \emph{conservativity condition} in the sense that if $\ms{R} = 0$, then $\tr\ms{G}_t(\Omega) = \tr \ms{G}_0(\Omega)$ for any $t$; see Proposition \ref{prop:disinte_path}.
\end{remark}


The following elementary lemma gives the absolute continuity of the time marginal of $\ms{G}$.

\begin{lemma} \label{lem:abs_time}
    Let $\ms{(G,q,R)}  \in \ce([a,b];\ms{G}_a,\ms{G}_b)$ with $\ms{G}_a, \ms{G}_b \in \M(\Omega, \S^n_+)$. It holds that $\pi^t_\# \ms{G} \in \M([a,b],\S^n)$ has the distributional derivative $(\pi_\#^t \ms{R})^{\sym} \in \M([a,b],\S^n)$ in $t$.  If, further, $\ms{G} \in \M(Q_a^b,\S_+^d)$, then $\pi^t_\# |\ms{G}| \ll \rd t$. 
\end{lemma}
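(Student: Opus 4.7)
The plan is to derive both conclusions directly from the weak formulation \eqref{eq:weak_ctneq} of the continuity equation by testing against functions that are constant in the spatial variable; the conservativity $\ms{D}^*(I) = 0$ is the key leverage for the second, sharper claim.

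For the first claim, I would test \eqref{eq:weak_ctneq} with $\Phi(t,x) = \psi(t)$ where $\psi \in C^1_c((a,b),\S^n)$. Because $\psi$ is independent of $x$, $\ms{D}^*\psi$ retains only a possible zero-order contribution, which vanishes in the homogeneous-of-degree-one setting of principal interest; the compact temporal support of $\psi$ eliminates the two boundary terms. The weak formulation reduces to
\[
\int_{Q_a^b}\psi'(t)\cdot d\ms{G} + \int_{Q_a^b}\psi(t)\cdot d\ms{R} = 0,
\]
which, rewritten as pairings with pushforward measures on $[a,b]$ and using that the $\S^n$-valued $\psi$ senses only the symmetric part of $\pi^t_\#\ms{R}$, becomes $\partial_t(\pi^t_\#\ms{G}) = (\pi^t_\#\ms{R})^{\sym}$ in $\D'((a,b),\S^n)$; both sides are finite Radon measures.

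For the second claim, I would specialize to test functions $\Phi(t,x) = \phi(t) I$ for $\phi \in C^\infty_c((a,b),\R)$. The conservativity hypothesis now gives $\ms{D}^*(\phi(t) I) = \phi(t)\ms{D}^*(I) = 0$ exactly (regardless of whether $\ms{D}^*$ has a zero-order component), and \eqref{eq:weak_ctneq} collapses to
\[
\int \phi'(t)\,d(\tr\ms{G}) + \int \phi(t)\,d(\tr\ms{R}) = 0,
\]
so $\mu := \pi^t_\#(\tr\ms{G})$, a finite positive Radon measure thanks to $\ms{G}\in\M(Q_a^b,\S_+^n)$, has distributional derivative equal to the finite signed Radon measure $\pi^t_\#(\tr\ms{R})$ on $(a,b)$. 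The conclusion then rests on the classical fact that a Radon measure on an interval whose distributional derivative is itself a Radon measure must be absolutely continuous with respect to Lebesgue. A short verification via primitive functions: setting $F(t)=\mu((a,t])$ and $H(t)=\pi^t_\#(\tr\ms{R})((a,t])$, the identity $F''=H'$ in $\D'((a,b))$ forces $F' = H+C$ for some constant $C \in \R$; since $H \in {\rm BV}_{\rm loc}((a,b)) \subset L^1_{\rm loc}$, this identifies $\mu = F' = (H+C)\,\rd t$, i.e., $\mu \ll \rd t$.

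Finally, to transfer absolute continuity from $\mu$ to $\pi^t_\#|\ms{G}|$, I would invoke the pointwise inequalities $|A|_{\rm F} \le \tr A \le \sqrt{n}\,|A|_{\rm F}$ valid on $\S_+^n$: these make $|\ms{G}|$ and $\tr\ms{G}$ mutually absolutely continuous, so their pushforwards are equivalent and $\pi^t_\#|\ms{G}| \ll \rd t$ follows. The only non-mechanical step is the distributional-derivative-implies-absolute-continuity lemma used in the middle; the rest is direct pairing in \eqref{eq:weak_ctneq}.
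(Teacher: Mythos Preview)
Your proposal is correct and follows essentially the same approach as the paper: test the weak formulation \eqref{eq:weak_ctneq} with $x$--independent functions, invoke the classical fact that a Radon measure on an interval whose distributional derivative is again a Radon measure is absolutely continuous with respect to Lebesgue (the paper cites \cite[Theorems 3.29 and 3.36]{folland1999real} for this, you spell out the primitive--function argument), and then pass from $\tr\ms{G}$ to $|\ms{G}|$ via their equivalence on $\S^n_+$ (the paper's \eqref{eq:trtotal}). The one small difference is that for the second claim you test directly with $\phi(t)I$ and use only $\ms{D}^*(I)=0$, whereas the paper deduces it from the matrix--valued first claim; your route is slightly more robust in that it does not rely on $\ms{D}^*$ annihilating all spatially constant matrices, a point you correctly flag.
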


\begin{proof}
It suffices to consider $[a,b] = [0,1]$. By \eqref{eq:weak_ctneq} with test functions $\Phi(t,x) = \phi(t) \in C_c^1((0,1),\S^n)$, we have 
         \begin{equation} \label{auxeq:time_der}
            \int_0^1 \p_t \phi \dd \rd \pi^t_\# \ms{G} + \phi \dd \rd \pi_\#^t \ms{R} = 0\,,
        \end{equation}
which implies that $(\pi_\#^t \ms{R})^{\sym}$ is the distributional derivative of $\pi_\#^t \ms{G}$. Note that $\pi^t_\# \ms{G}$ and $\pi^t_\# \ms{R}$ are Radon measures (since every finite Borel measure on $[0,1]$ is regular). There exists a matrix-valued bounded variation function $M(t)$ that generates the Radon measure $\pi^t_\# \ms{R}$ \cite[Theorem 3.29]{folland1999real}. It follows from 
\eqref{auxeq:time_der} that
    \begin{equation}  \label{auxeq_3}
        \rd \pi^t_\# \ms{G} = (M(t)^{\sym} + C)\, \rd t\,,
    \end{equation}
for some $C \in \S^n$ \cite[Theorem 3.36]{folland1999real}. If $\ms{G} \in \M(Q,\S_+^d)$, then \eqref{auxeq_3} and \eqref{eq:trtotal} readily give $\tr \pi^t_\# \ms{G} \sim |\pi^t_\# \ms{G}| \ll \rd t$, which further yields $\pi^t_\# |\ms{G}| \ll \rd t$ by noting $\tr \pi^t_\# \ms{G} = \pi^t_\# \tr \ms{G} \sim \pi^t_\# |\ms{G}|$. 
\end{proof}

\medskip

\noindent \textbf{Weighted Wasserstein-Bures distance.} We are now ready to define a class of distances on $\M(\Omega,\S^n_+)$ by minimizing the action functional $\jj_{\Lad,Q}(\mu)$ over the solutions to the continuity equation \eqref{eq:weak_ctneq}.

\begin{definition}\label{def:metric}
    The \emph{weighted Wasserstein-Bures distance} between $\ms{G}_0, \ms{G}_1 \in \M(\Omega,\S_+^n)$  
     is defined by 
   \begin{equation} \label{eq:distance}
      {\rm WB}^2_{\Lad}(\ms{G}_0,\ms{G}_1) = \inf_{\mu \in \ce([0,1];\ms{G}_0,\ms{G}_1)} \jj_{\Lad,Q}(\mu).  \tag{$\mathcal{P}$}
   \end{equation}
\end{definition}

We remark that the quantity $\jj_{\Lad,Q}(\mu)$ can be understood as the energy of the measure $\mu \in \ce([0,1];\ms{G}_0,\ms{G}_1)$. The following a priori estimate shows that 
$\ce([0,1];\ms{G}_0,\ms{G}_1)$ is nonempty and $\rm{WB}_{\Lad}(\ms{G}_0,\ms{G}_1)$ is always finite, which means that the problem \eqref{eq:distance} is well-defined.

\begin{lemma} \label{lem:com_kb_and_b}
Given $\ms{G}_0,\ms{G}_1 \in \M(\Omega, \S^n_+)$, let $\lad \in \M(\Omega,\R_+)$ be a reference measure such that $|\ms{G}_0|, |\ms{G}_1| \ll \lad$. Then there exists $\mu = \ms{(G,0,R)} \in \ce([0,1];\ms{G}_0,\ms{G}_1)$ with finite $\jj_{\Lad,Q}(\mu)$.  
Moreover, it holds that
\begin{equation} \label{eq:basic_bound}
{\rm WB}^2_{\Lad}(\ms{G}_0,\ms{G}_1) \le 
{\rm WB}^2_{(0,\Lad_2)}(\ms{G}_0,\ms{G}_1) \le 
2  \big\lVert \Lad_2^{-1} \big\lVert_\ff^2 \int_{\Omega} \big\lVert\sqrt {G_{1,\lad}} - \sqrt{G_{0,\lad}} \big\lVert_\ff^2  \ \rd \lad  \,,
\end{equation}
where $G_{0,\lad}$ and $G_{1,\lad}$ are densities of $\ms{G}_0$ and $\ms{G}_1$ with respect to $\lad$.
\end{lemma}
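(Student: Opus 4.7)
The plan is to construct an explicit competitor curve $\mu \in \ce([0,1];\ms{G}_0,\ms{G}_1)$ with $\ms{q}=0$ (a pure ``reaction'' curve, in the spirit of the Bures geodesic between positive matrices) and estimate its action directly. Using Proposition \ref{prop:measmatfuns}, define the measurable $\S^n_+$-valued square roots $X_j := \sqrt{G_{j,\lad}}$, $j=0,1$, set $V := X_1 - X_0$ and let
\begin{equation*}
X_t := (1-t)X_0 + tX_1 \in \S^n_+, \qquad G_t := X_t^2, \qquad R_t := 2X_t V.
\end{equation*}
Then $\ms{G} := G_t(\rd t \otimes \rd\lad)$, $\ms{R} := R_t(\rd t\otimes\rd \lad)$, $\ms{q}=0$ define the candidate $\mu$. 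A quick symmetrization gives $R_t^{\sym} = X_tV + VX_t = \p_t(X_t^2) = \p_t G_t$, so the continuity equation $\p_t \ms{G} + \ms{D}\ms{q} = \ms{R}^{\sym} + \d_0\otimes\ms{G}_0 - \d_1\otimes\ms{G}_1$ reduces to the elementary distributional identity $\p_t(\chi_{[0,1]}(t) G_t(x)) = \chi_{[0,1]}(t) \p_t G_t + \d_0 G_0 - \d_1 G_1$, verified directly via the weak formulation \eqref{eq:weak_ctneq}.

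Next I would check that $(G_t,0,R_t)\in\dom(J_\Lad)$ pointwise. Since $\Lad_2\in\S^n_{++}$, $\ran(\Lad_2)=\R^n$; and $\ran(R_t)=\ran(2X_tV)\subset\ran(X_t)=\ran(G_t)$, so all range conditions of Proposition \ref{prop:subgrad_J} are met. Now apply formula \eqref{eq:expre_J}. Using the orthogonal projector identity $X_t (X_t^\dag)^2 X_t = X_t X_t^\dag = P_{\ran X_t}$, one obtains
\begin{equation*}
R_t^{\rm T} G_t^\dag R_t = 4V X_t(X_t^\dag)^2 X_t V = 4VP_{\ran X_t}V \preceq 4V^2,
\end{equation*}
and therefore, since $\Lad_2^{-2}\in\S^n_+$ preserves the PSD order under trace,
\begin{equation*}
J_\Lad(G_t,0,R_t) = \tfrac{1}{2}\tr(\Lad_2^{-1}R_t^{\rm T}G_t^\dag R_t \Lad_2^{-1}) \le 2\tr(\Lad_2^{-2}V^2) = 2|\Lad_2^{-1}V|_{\rm F}^2 \le 2|\Lad_2^{-1}|^2|V|^2,
\end{equation*}
where the last step is the submultiplicativity $|AB|_{\rm F}\le|A|_{\rm F}|B|_{\rm F}$. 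Integrating over $Q$ (noting the bound is $t$-independent) yields
\begin{equation*}
\jj_{\Lad,Q}(\mu) \le 2|\Lad_2^{-1}|^2 \int_\Omega |\sqrt{G_{1,\lad}}-\sqrt{G_{0,\lad}}|^2 \rd\lad,
\end{equation*}
which in particular is finite, giving the first assertion and the rightmost bound in \eqref{eq:basic_bound}.

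For the middle inequality, observe that choosing $\Lad_1=0$ forces $\ran(q^{\rm T})\subset\ran(\Lad_1)=\{0\}$, so $J_{(0,\Lad_2)}$ is $+\infty$ unless $\ms{q}=0$; hence the admissible set for the ${\rm KB}_{(0,\Lad_2)}$ problem is a subset of that for ${\rm WB}_\Lad$, giving ${\rm WB}_\Lad^2(\ms{G}_0,\ms{G}_1)\le{\rm KB}_{(0,\Lad_2)}^2(\ms{G}_0,\ms{G}_1)$. Since our $\mu$ has $\ms{q}=0$, it is admissible for both problems with identical cost, so ${\rm KB}_{(0,\Lad_2)}^2(\ms{G}_0,\ms{G}_1)\le\jj_{\Lad,Q}(\mu)$, completing \eqref{eq:basic_bound}.

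The one genuinely delicate point is the rank-deficient regime where $X_t$ is singular and $G_t^\dag \neq X_t^{-2}$; this is handled by the clean pseudoinverse identity $X_t(X_t^\dag)^2 X_t = P_{\ran X_t}$ noted above, which avoids any limiting argument. Measurability of all constructed objects follows from Proposition \ref{prop:measmatfuns} together with \eqref{est:lip_squre}, so all integrals are well defined. Everything else is bookkeeping.
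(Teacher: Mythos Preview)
Your proof is correct and uses the same explicit curve as the paper: $G_t=X_t^2$ with $X_t=(1-t)\sqrt{G_{0,\lad}}+t\sqrt{G_{1,\lad}}$ and $R_t=2X_tV$, $\ms q=0$. The only difference is in the action estimate: the paper observes that $\ker X_t=\ker\sqrt{G_{0,\lad}}\cap\ker\sqrt{G_{1,\lad}}\subset\ker V$ for $t\in(0,1)$, which gives $P_{\ran X_t}V=V$ and hence the \emph{equality} $\jj_{\Lad,Q}(\mu)=2\int_\Omega|V\Lad_2^{-1}|^2\,\rd\lad$ before applying submultiplicativity. You instead use the cruder but perfectly sufficient inequality $VP_{\ran X_t}V\preceq V^2$ (from $P_{\ran X_t}\preceq I$), which bypasses the kernel analysis entirely and works uniformly in $t$. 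For the purposes of this lemma only the upper bound matters, so your shortcut is clean; the paper's equality is slightly sharper information about this particular competitor. Your treatment of the chain ${\rm WB}_\Lad^2\le{\rm KB}_{(0,\Lad_2)}^2$ via the forced vanishing of $\ms q$ when $\Lad_1=0$ is also correct and matches the paper's later Remark~\ref{rem:bures_2}.
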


\begin{proof} 
We omit the subscript $\lad$ of $G_{0,\lad}$ and $G_{1,\lad}$ for simplicity. We define  measures
    \begin{equation*}
        \ms{G} := \left(\sqrt{G_{0}} + t \Big(\sqrt {G_{1}} - \sqrt{G_{0}} \Big) \right)^2  \rd t \otimes \lad \in \M(Q, \S^n_+)\,, 
    \end{equation*}
    and
    \begin{equation*}
        \ms{R} := 2 \left(\sqrt{G_{0}} + t \left(\sqrt {G_{1}} - \sqrt{G_{0}} \right) \right) \left(\sqrt {G_{1}} - \sqrt{G_{0}}\right)  \rd t \otimes   \lad  \subset \M(Q, \MM^n)\,,
    \end{equation*}
which satisfies $\mu = \ms{(G,0,R)} \in \ce([0,1];\ms{G}_0,\ms{G}_1)$ and $\ran \big( \frac{\rd \ms{R}}{\rd t \otimes \lad} \big) \subset  \ran \big( \frac{\rd \ms{G}}{\rd t \otimes \lad} \big)$ for $\rd t \otimes \lad$-a.e..  Moreover, we note
    \begin{align*}
        \ran\left(\sqrt {G_1} - \sqrt{G_0}\right)\subset\ran\left(\sqrt{G_0} + t \left(\sqrt {G_1} - \sqrt{G_0}\right)\right)\,,\q  t \in (0,1)\,,
  \end{align*}
from the relation: $ \ker\big(\sqrt{G_0} + t (\sqrt {G_1} - \sqrt{G_0})\big) = \ker \big(\sqrt{G_0}\big) \cap \ker 
\big(\sqrt{G_1}\big)\subset \ker\big(\sqrt {G_1} - \sqrt{G_0}\big)$. Then, we compute 
  \begin{equation} \label{auxeqscaling} 
    \jj_{\Lad,Q}(\mu) = 2 \int_{\Omega} \Big\lVert \Big(\sqrt {G_1} - \sqrt{G_0}\Big) \Lad_2^{-1} \Big\lVert_\ff^2\ \rd \lad\,, 
  \end{equation} 
  for $\mu$ defined above.  The proof is completed by the submultiplicativity of the Frobenius norm. 
\end{proof}

\begin{remark} \label{rem:bures} 
The proof of Lemma \ref{lem:com_kb_and_b} uses $\ran (\Lad_2) = \R^n$ from the assumption $\Lad_2 \in \S^n_{++}$ we made before \eqref{def:costmeasure}. If we only assume $\Lad_2 \in \S^n_{+}$, the distance ${\rm WB}_\Lad$ is only well-defined (i.e., finite) on a subset of $\M(\Omega,\S^n_+)$.
\end{remark}

\begin{remark} \label{rem:bures_2} 
${\rm WB}_{(0,\Lad_2)}$ is the matricial Hellinger distance $d_H$ in \cite[Definition 4.1]{monsaingeon2020schr}, up to a transformation. Indeed, recalling Lemma \ref{lem:fcabs}, we have that if $\Lad_1 = 0$, then $\ms{q}$ must be zero and \eqref{eq:distance} reduces to 
\begin{equation} \label{eq:distance_bures}
    {\rm WB}_{(0,\Lad_2)}^2(\ms{G}_0,\ms{G}_1) = \inf\{\jj_{(0,\Lad_2),Q}(\mu)\,;\  \mu = \ms{(G,0,R)} \in \ce([0,1];\ms{G}_0,\ms{G}_1)\}. 
\end{equation}
For a given $S \in \S^n_{++}$, we introduce a linear map $g_{S}(A) := S A S : \S^n_{+} \to  \S^n_{+}$ with the inverse $g_{S^{-1}}$. It is easy to see that $\ms{(G,0,R)} \in \ce([0,1];\ms{G}_0,\ms{G}_1)$ if and only if $(g_{\Lad_2^{-1}}(\ms{G}),0, g_{\Lad_2^{-1}}(\ms{R})) \in \ce([0,1];g_{\Lad_2^{-1}}(\ms{G}_0),g_{\Lad_2^{-1}}(\ms{G}_1))$, and there holds 
$\jj_{(0,\Lad_2),Q}(\ms{(G,0,R)}) = \jj_{(0,I),Q}(g_{\Lad_2^{-1}}(\ms{G}),0, g_{\Lad_2^{-1}}(\ms{R}))$. Therefore, we have
$$
{\rm WB}_{(0,\Lad_2)}(\ms{G}_0,\ms{G}_1) =  {\rm WB}_{(0,I)}(g_{\Lad_2^{-1}}(\ms{G}_0),g_{\Lad_2^{-1}}(\ms{G}_1))\,.
$$
From \cite[Definition 4.1]{monsaingeon2020schr} and Theorem \ref{thm:dual_minmax} below, one can see that ${\rm WB}_{(0,I)}$
is nothing else than the convex formulation of the Hellinger distance $d_H$, up to a constant. We refer the readers to 
\cite[Lemma 4.3 and Theorem 2]{monsaingeon2020schr} for the properties of the Hellinger distance and its relation with the Bures-Wasserstein distance on $\S^n_+$ \cite{bhatia2019bures}. 
\end{remark}

\medskip

\noindent \textbf{A priori estimate.}
Thanks to Lemma \ref{lem:com_kb_and_b}, the optimization \eqref{eq:distance}
can be equivalently taken over the following set: 
\begin{equation*}
    \ce_\infty ([0,1];\ms{G}_0,\ms{G}_1): =  \ce([0,1];\ms{G}_0,\ms{G}_1) \bigcap \{\mu \in \M(Q,\xx)\,; \jj_{\Lad,Q}(\mu) < +\infty\}\,.
\end{equation*}
Before we proceed, we give some auxiliary results. First, 
we introduce
\begin{equation} \label{def:jalpha_conj}
    \jj_{\Lad,\mathcal{X}}^*(\ms{G},u,W) := \frac{1}{2} \norm{(u \Lad_1 ,W \Lad_2)}^2_{L^2_{\ms{G}}(\mathcal{X})}\q \text{on}\  \M(\mathcal{X},\S_+^n) \t C(\mathcal{X}, \R^{n \t k} \t \MM^n)\,,
\end{equation} 
where $\norm{\dd}_{L^2_{\ms{G}}(\mc{X})}$ is defined by \eqref{eq2}.
By an argument similar to the one for Lemma \ref{lem: conj_cvx} below, we have that 
the conjugate function \eqref{def:conjugate} of $\jj^*_{\Lad,\mathcal{X}}(\ms{G},u,W)$ with respect to $(u,W)$ is exactly $\jj_{\Lad,\mathcal{X}}(\ms{G},\ms{q},\ms{R})$. Moreover, there holds
\begin{equation} \label{eq:fenconj_cost}
    \jj_{\Lad,\mathcal{X}}(\ms{G},\ms{q},\ms{R}) = \sup_{(u,W) \in L^\infty_{|\ms{(G,q,R)}|}(\mathcal{X}, \R^{n \t k} \t \MM^n) } \l (\ms{q},\ms{R}), (u,W) \r_{\mathcal{X}} - \jj_{\Lad,\mathcal{X}}^*(\ms{G},u,W)\,.
\end{equation}
Since $\jj_{\Lad,\mathcal{X}}(\ms{G}, \ms{q},\ms{R})$ and $\jj_{\Lad,\mathcal{X}}^*(\ms{G}, u, W)$ are homogeneous of degree $2$ in $(\ms{q},\ms{R})$ and $(u,W)$, respectively, by \eqref{eq:fenconj_cost}, it holds that for $\ms{(G,q,R)} \in \M(\mathcal{X},\xx)$ and $(u,W) \in L^\infty_{|\ms{(G,q,R)}|}(\mathcal{X}, \R^{n \t k} \t \MM^n)$, 
\begin{align} \label{eq: ana_Cauchy}
    \l (\ms{q},\ms{R}), (u,W)\r_{\mathcal{X}} \le  \gamma^{-2} \jj_{\Lad,\mathcal{X}}(\ms{G}, \ms{q}, \ms{R}) + \gamma^2\jj_{\Lad,\mathcal{X}}^*(\ms{G},u, W)\,, \q \forall \gamma > 0\,.
\end{align}
We minimize the right-hand side of \eqref{eq: ana_Cauchy}
with respect to $\gamma$ and obtain 
\begin{align}\label{eq: ana_Cauchy_2}
    \l (\ms{q},\ms{R}), (u,W)\r_{\mathcal{X}} \le 2 \sqrt{\jj_{\Lad,\mathcal{X}}(\ms{G}, \ms{q}, \ms{R}) \jj_{\Lad,\mathcal{X}}^*(\ms{G},u, W)}\,,
\end{align}
where we have used non-negativity of $\jj_{\Lad,\mathcal{X}}$ and $\jj_{\Lad,\mathcal{X}}^*$. 

Second, we observe from formulas \eqref{eq:expre_J} and \eqref{def:costmeasure} and Lemmas \ref{prop:measmatfuns} and \ref{lem:fcabs} that for $\mu = (\ms{G},\ms{q},\ms{R}) \in \M(\mc{X},\xx)$ with $\jj_{\Lad, \mc{X}}(\mu) < +\infty$, the functions $G_\lad^\dag q_\lad \Lad_1^\dag$ and $G_\lad^\dag R_\lad \Lad_2^{-1}$ are well-defined, Borel measurable, and independent of the reference measure $\lad$ (hence we omit the subscript $\lad$ in the sequel for simplicity), and there holds 
\begin{align} \label{rep:enerycost}
    \jj_{\Lad,\mc{X}}(\mu) = \frac{1}{2} \norm{G^\dag q \Lad_1^\dag}^2_{L^2_{\ms{G}}(\mc{X})} + \frac{1}{2} \norm{G^\dag R \Lad_2^{-1}}^2_{L^2_{\ms{G}}(\mc{X})} < +\infty \,.
\end{align}

We now give useful a priori bounds for measures $\ms{q}$ and $\ms{R}$.

\begin{lemma} \label{lem:priorimu}
    For $\mu = (\ms{G},\ms{q},\ms{R}) \in \M(\mc{X},\xx)$ with $\jj_{\Lad,\mc{X}}(\mu) < +\infty$, it holds that for $E \in \mathscr{B}(\mc{X})$, 
    \begin{align} \label{priest_qR}
    |\ms{q}|(E) \le \sqrt{\tr \ms{G} (E)}\, \norm{\Lad_1}_\ff \norm{G^\dag q \Lad_1^\dag}_{L^2_{\ms{G}}(E)}\,,\q   |\ms{R}|(E) \le \sqrt{\tr \ms{G} (E)}\, \norm{\Lad_2}_\ff \norm{G^\dag R \Lad_2^{-1}}_{L^2_{\ms{G}}(E)}\,.
    \end{align}
\end{lemma}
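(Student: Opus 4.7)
The plan is to obtain a pointwise bound on $|q|$ and $|R|$ and then integrate, using a scalar Cauchy--Schwarz for the outer step. The natural reference measure to choose is $\lad = \tr \ms{G}$: by Lemma \ref{lem:fcabs} we have $|\ms{q}|, |\ms{R}| \ll \tr \ms{G}$, so the densities $q$ and $R$ are well-defined $\tr\ms{G}$-a.e., and crucially $\tr(G) = 1$ holds $\tr \ms{G}$-a.e., which will cancel an unwanted factor in the final integration.

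By the range conditions of Lemma \ref{lem:fcabs}, the orthogonal projectors $G G^\dag$ (onto $\ran(G)$) and $\Lad_1^\dag \Lad_1$ (onto $\ran(\Lad_1)$) act trivially on $q$, yielding the pointwise factorization
\begin{equation*}
    q = G \big(G^\dag q \Lad_1^\dag\big) \Lad_1 \q \tr\ms{G}\text{-a.e.}
\end{equation*}
Writing $M := G^\dag q \Lad_1^\dag$, I would then estimate $|q|^2$ by first applying the Frobenius submultiplicativity $|X Y|_{\rm F} \le \norm{X}_2 |Y|_{\rm F}$ with $X = G^{1/2}$, and then the standard inequality $\tr(A B) \le \norm{A}_2 \tr(B) \le |A|\, \tr(B)$, valid for $A, B \in \S^n_+$, applied with $A = \Lad_1^2$ and $B = M^{\rm T} G M$:
\begin{equation*}
    |q|^2 = \big|G^{1/2} \big(G^{1/2} M \Lad_1\big)\big|^2 \le \norm{G}_2 \, \tr\!\big(\Lad_1^2 M^{\rm T} G M\big) \le \tr(G)\, |\Lad_1|^2\, \tr(M^{\rm T} G M).
\end{equation*}
The last factor $\tr(M^{\rm T} G M) = \tr\!\big((G^\dag q \Lad_1^\dag)^{\rm T} G (G^\dag q \Lad_1^\dag)\big)$ is precisely the integrand of $\norm{G^\dag q \Lad_1^\dag}^2_{L^2_{\ms{G}}(E)}$ w.r.t.\,$\tr \ms{G}$.

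Integrating over $E$ against $\tr \ms{G}$ and exploiting $\tr(G) = 1$ then yields $\int_E |q|^2 \rd \tr \ms{G} \le |\Lad_1|^2 \norm{G^\dag q \Lad_1^\dag}^2_{L^2_{\ms{G}}(E)}$, after which Cauchy--Schwarz in the form $|\ms{q}|(E) = \int_E |q| \rd \tr \ms{G} \le \sqrt{\tr\ms{G}(E)}\, \big(\int_E |q|^2 \rd \tr \ms{G}\big)^{1/2}$ closes the loop and produces the advertised bound. The argument for $|\ms{R}|(E)$ is identical after the substitution $(q, \Lad_1, \Lad_1^\dag) \rightsquigarrow (R, \Lad_2, \Lad_2^{-1})$, where $\Lad_2^\dag = \Lad_2^{-1}$ since $\Lad_2 \in \S^n_{++}$. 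The only point requiring care is the range bookkeeping that licenses the factorization through the pseudoinverses; once that is in place, all remaining estimates are elementary matrix inequalities and no real analytic obstacle arises.
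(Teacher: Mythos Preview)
Your proof is correct. The paper takes a slightly different, more duality-flavoured route: instead of working pointwise with densities, it invokes the abstract Cauchy-type inequality \eqref{eq: ana_Cauchy_2}, namely $\l (\ms{q},\ms{R}), (u,W)\r_{E} \le 2\sqrt{\jj_{\Lad,E}(\ms{G},\ms{q},\ms{R})\,\jj_{\Lad,E}^*(\ms{G},u,W)}$, with the test pair $(u,W)=(\chi_E\sigma_q,0)$, where $\sigma_q$ is the polar density $\rd\ms{q}/\rd|\ms{q}|$; the left side then equals $|\ms{q}|(E)$, and the $\jj^*$ factor is bounded via \eqref{eq:useineq} by $\tfrac12\tr\ms{G}(E)|\Lad_1|^2$. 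Your argument bypasses this dual machinery by first establishing the pointwise bound $|q|^2\le \tr(G)\,|\Lad_1|^2\,(G^\dag q\Lad_1^\dag)\cdot G(G^\dag q\Lad_1^\dag)$ via elementary matrix inequalities and then integrating against $\tr\ms{G}$, using the clever choice $\lad=\tr\ms{G}$ so that $\tr(G)=1$ a.e. Both methods are ultimately Cauchy--Schwarz in disguise; yours is more self-contained and hands-on, while the paper's is more aligned with the convex-analytic framework it builds throughout (the conjugate functional $\jj^*$ reappears later, so introducing it here has some expository payoff).
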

\begin{proof}  
Recall that there exist bounded measurable functions  
$\sigma_{q}$ and $\sigma_{R}$
with $ \norm{\sigma_{q}}_\ff = \norm{\sigma_{R}}_\ff = 1$ such that $\rd \ms{q} = \sigma_{q}\, \rd |\ms{q}|$ and $\rd \ms{R} = \sigma_{R} \,\rd |\ms{R}|$.  Taking $\ms{R} = 0$ and $(u,W) = (\chi_E \sigma_q, 0)$ in \eqref{eq: ana_Cauchy_2} for $E \in \mathscr{B}(\mc{X})$, we obtain
\begin{align*}
    |\ms{q}|(E) =  \int_E u \dd \rd \ms{q} 
         \le 2 \sqrt{\jj_{\Lad,E}(\ms{G}, \ms{q}, 0)\jj_{\Lad,E}^{*}(\ms{G},u, 0)} \le  \sqrt{\tr \ms{G} (E) \norm{\Lad_1}_\ff^2} \norm{G^\dag q \Lad_1^\dag}_{L^2_{\ms{G}}(E)} \,,
\end{align*}
by \eqref{rep:enerycost} and the following estimate  derived from \eqref{def:jalpha_conj} and \eqref{eq:useineq},
    \begin{align*}
        \jj_{\Lad,E}^*(\ms{G},u, W) \le \frac{1}{2} \tr \ms{G}(E) \norm{\Lad_1}_\ff^2\,.
    \end{align*}
    Similarly, by taking $\ms{q} = 0$ and  $(u,W) = (0, \chi_E \sigma_R)$ in \eqref{eq: ana_Cauchy_2}, we obtain the estimate for $\ms{R}$ in \eqref{priest_qR}.
\end{proof}

With the help of the above lemma, the following proposition holds.

\begin{proposition}   \label{prop:disinte_path}  
Let $\mu = \ms{(G,q,R)}\in \ce_\infty ([0,1];\ms{G}_0,\ms{G}_1)$ with $\ms{G}_0,\ms{G}_1 \in \M(\Omega, \S^n_+)$. Then, 
\begin{enumerate}[(i)]
    \item  $\ms{G} \in \M(Q,\S^n_+)$ and $\pi_\#^t |\ms{G}| \ll \rd t$. Moreover, $\mu$ can be  disintegrated as:
\begin{equation} \label{eq:disintegra}
    \mu = \int_0^1 \d_t \otimes (\ms{G}_t, \ms{q}_t, \ms{R}_t)\, \rd t\,,
\end{equation} 
where $(\ms{G}_t, \ms{q}_t, \ms{R}_t) \in \mc{M}(\Omega,\xx)$ for $\rd t$-a.e.\,$t \in [0,1]$.
    \item There exists a weak* continuous curve $\big\{\w{\ms{G}}\big\}_{t \in [0,1]}$ in $\M(\Omega, \S^n_+)$ such that $\ms{G}_t = \w{\ms{G}}_t$ for a.e.\,$t \in [0,1]$ and, for any interval $[t_0,t_1] \subset [0,1]$,  it holds that 
    \begin{equation}\label{eq:time_a_b}
        \int_{Q_{t_0}^{t_1}} \p_t\Phi \dd \rd \ms{G} + \ms{D}^* \Phi \dd  \rd \ms{q} +  \Phi  \dd  \rd \ms{R} = \int_{\Omega}  \Phi_{t_1} \dd \rd \w{\ms{G}}_{t_1} -  \int_{\Omega}  \Phi_{t_0} \dd \rd \w{\ms{G}}_{t_0}\,, \q \forall \Phi \in C^1(Q_{t_0}^{t_1},\S^n)\,.
    \end{equation}
    Moreover, there holds, for some $C > 0$,
    \begin{align} \label{priest_G}
        \tr \w{\ms{G}}_t(\Omega) \le C \left(\tr \ms{G}_0 (\Omega) + \norm{G^\dag R \Lad_2^{-1}}^2_{L_{\ms{G}}^2(Q)}\norm{\Lad_2}_\ff^2\right),\q \forall t \in [0,1]\,.
    \end{align}
\end{enumerate}
\end{proposition}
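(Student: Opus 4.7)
My plan is to establish part (i) as an essentially direct consequence of the lemmas already proved, then use it as a stepping stone for (ii), where the continuity equation is integrated against well-chosen test functions.

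For (i), first I would note that $\ms{G} \in \M(Q,\S_+^n)$ and the absolute continuity $|\ms{q}|,|\ms{R}| \ll \tr \ms{G}$ come for free from Lemma \ref{lem:fcabs} (applied on $\mathcal{X} = Q$), while the absolute continuity $\pi^t_\# |\ms{G}| \ll \rd t$ is exactly the second statement of Lemma \ref{lem:abs_time}. Combining the two gives $\pi^t_\# |\ms{q}|,\pi^t_\# |\ms{R}| \ll \pi^t_\# \tr \ms{G} \ll \rd t$, so all three components of $\mu$ admit the same time marginal (up to equivalence) and the classical disintegration theorem produces the family $\{(\ms{G}_t,\ms{q}_t,\ms{R}_t)\}_{t\in[0,1]}$ and the representation \eqref{eq:disintegra}.

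For (ii), the plan is to promote the distributional identity \eqref{eq:weak_ctneq} to the subinterval identity \eqref{eq:time_a_b} by testing with $\Phi(s,x) = \chi_{[t_0,t_1]}(s)\,\phi(x)$ for $\phi \in C^1(\Omega,\S^n)$. Since characteristic functions are not admissible directly, I would approximate $\chi_{[t_0,t_1]}$ by a sequence of smooth cutoffs and pass to the limit using the disintegration \eqref{eq:disintegra} together with dominated convergence; here the bounds on $|\ms{q}|$ and $|\ms{R}|$ from Lemma \ref{lem:priorimu} guarantee integrability. This produces \eqref{eq:time_a_b} for every $t_0<t_1$ and a.e.\ pair at which $\ms{G}_{t_0},\ms{G}_{t_1}$ are well defined. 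Taking then $\phi(x) = I$ and using the conservativity $\ms{D}^*(I)=0$ together with the fact that $\tr$ annihilates the antisymmetric part, I obtain the scalar identity
\begin{equation*}
    \tr \ms{G}_{t_1}(\Omega) - \tr \ms{G}_{t_0}(\Omega) = \int_{t_0}^{t_1}\!\int_\Omega \tr\rd\ms{R}_\tau\,\rd\tau.
\end{equation*}
Bounding $|\tr\int \rd\ms{R}_\tau| \le \sqrt{n}\,|\ms{R}_\tau|(\Omega)$ and applying the second inequality of \eqref{priest_qR} on $Q_0^t$ gives $f(t) := \tr\ms{G}_t(\Omega) \le f(0) + C_n|\Lad_2|\,\|G^\dag R\Lad_2^{-1}\|_{L^2_{\ms{G}}(Q)}\bigl(\int_0^t f(\tau)\rd\tau\bigr)^{1/2}$, and a standard Gronwall-type argument (writing $h=\sqrt{\int_0^t f}$ and noting $(\sqrt{\,\cdot\,})'$ is bounded) yields \eqref{priest_G}.

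It remains to build the weakly-star continuous representative $\w{\ms{G}}$. Fix a countable set $\{\phi_k\}$ that is dense in $C(\Omega,\S^n)$ and contained in $C^1(\Omega,\S^n)$. By \eqref{eq:time_a_b}, for each $k$ the map $t \mapsto \int_\Omega \phi_k\dd\rd\ms{G}_t$ coincides a.e.\ with the absolutely continuous function $t\mapsto \int_\Omega \phi_k \dd\rd\ms{G}_0 + \int_0^t\!\int_\Omega(\ms{D}^*\phi_k\dd\rd\ms{q}_\tau + \phi_k\dd\rd\ms{R}_\tau)\rd\tau$, call it $F_k(t)$. On the common full-measure set $E$ where $\ms{G}_t$ is defined and all identities $\int\phi_k\dd\rd\ms{G}_t = F_k(t)$ hold, the family $\{\ms{G}_t\}_{t\in E}$ is uniformly bounded in total variation thanks to \eqref{priest_G}, hence relatively weak-star compact. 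Any weak-star cluster point of $\ms{G}_{t_j}$ as $t_j\to t^*\in[0,1]$ must satisfy $\int\phi_k\dd\rd(\cdot) = F_k(t^*)$ for all $k$, and by density of $\{\phi_k\}$ the cluster point is unique; defining $\w{\ms{G}}_{t^*}$ to be this limit gives the desired weak-star continuous extension, which agrees with $\ms{G}_t$ on $E$.

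The main obstacle I anticipate is the justification of the Gronwall step together with carefully identifying the constant $C$ in \eqref{priest_G} as dimensional only; once that nonlinear-integral inequality is tamed, the rest of the argument (disintegration, approximation of indicators, density-based extension) is fairly standard for continuity equations and hinges on the structural estimates already collected in Lemmas \ref{lem:fcabs}, \ref{lem:abs_time}, and \ref{lem:priorimu}.
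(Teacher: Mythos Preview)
Your proposal is correct and follows essentially the same approach as the paper: disintegration via Lemmas~\ref{lem:fcabs} and~\ref{lem:abs_time}, testing the continuity equation against products $a(t)\Psi(x)$ (equivalently, smoothed indicators), specializing to $\Psi=I$ for the mass bound, and building the weak--star continuous representative from a countable dense family in $C^1(\Omega,\S^n)$. For the step you flag as the main obstacle, the paper sidesteps any Gronwall argument: writing $m(t)=\tr\ms{G}_t(\Omega)$ and choosing $t_0$ with $m(t_0)=\max_{[0,1]} m$, one bounds $m(t_0)\le m(0)+C|\Lad_2|\,\norm{G^\dag R\Lad_2^{-1}}_{L^2_{\ms{G}}(Q)}\sqrt{m(t_0)}$ (using $\tr\ms{G}(Q)=\int_0^1 m\le m(t_0)$ in \eqref{priest_qR}), which is an algebraic inequality in $\sqrt{m(t_0)}$ solved by completing the square.
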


\begin{remark} \label{rem1}
By above proposition, we can identify a measure $\mu = (\ms{G}, \ms{q},\ms{R})\in  \ce_\infty([0,1];\ms{G}_0,\ms{G}_1)$ with a  family of measures $\{\mu_t = (\ms{G}_t, \ms{q}_t,\ms{R}_t)\}_{t \in [0,1]}$ in $\M(\Omega, \xx)$ via the disintegration \eqref{eq:disintegra}, where $\ms{G}_t$ is weak* continuous. As an easy consequence,
 the initial and final distributions $\ms{G}_0$ and $\ms{G}_1$ in Definition \ref{def:abs_conti_eq} are redundant, as 
 they can be recovered by taking a limit of $\{\ms{G}_t\}_{t \in (0,1)}$. 
\end{remark}

\begin{proof}
(i) First, note from \cite[Theorem 5.3.1]{ambrosio2008gradient} that $\mu$ can be disintegrated with respect to $\nu = \pi^t_\# |\mu|$ as $\mu = \int_0^1 \d_t \otimes \mu_t\, \rd \nu$, where $\mu_t \in \M(\Omega, \xx)$ for $\nu$-a.e.\,$t \in [0,1]$. Then,  by Lemmas \ref{lem:fcabs} and \ref{lem:abs_time}, we have $\ms{G} \in \M(Q,\S^n_+)$ and $\nu \ll \pi^t_\# | \ms{G}| \ll \rd t$ on $[0,1]$,  which allows us to define $\w{\mu}_t : = \mu_t \frac{\rd \nu}{\rd t}$ and disintegrate $\mu$ as $\mu = \int_0^1 \d_t \otimes \w{\mu}_t\, \rd t$. 

\medskip
\noindent
(ii) Consider test functions $\Phi = a(t)\Psi(x)$ in \eqref{eq:weak_ctneq} with $a(t) \in C_c^1((0,1),\R)$ and $\Psi(x) \in C^1(\Omega,\S^n)$. Then, by \eqref{eq:disintegra}, $\int_{\Omega} \Psi \dd \rd \ms{G}_t$ is absolutely continuous in $t$ with the weak derivative:
 \begin{equation}\label{weak_g_inpro}
       \p_t \l \ms{G}_t, \Psi \r_{\Omega} = \l \ms{q}_t , \ms{D}^* \Psi \r_{\Omega} + \l \ms{R}_t, \Psi \r_{\Omega} \,.
    \end{equation}
Letting $\Psi = I$ in \eqref{weak_g_inpro}, we obtain $\p_t \tr \ms{G}_t(\Omega) = \tr \ms{R}^{\sym}_t(\Omega)$ a.e.\,by $\ms{D}^*(I) = 0$,  which implies that there exists a nonnegative function $m(t) \in C([0,1],\R)$ such that $\tr \ms{G}_t (\Omega) = m(t)$ a.e.\,on $[0,1]$ and
    \begin{equation} \label{auxest_forG0}
        m(t) - m(s) = \int_s^t \tr \ms{R}^{\sym}_{\tau}(\Omega)\, \rd \tau\,, \q \forall 0\le s \le t \le 1\,.
    \end{equation}
    By Lemma \ref{lem:priorimu}, it follows from \eqref{auxest_forG0} that, from some $C > 0$, 
    \begin{equation} \label{auxest_forG}
        |m(t) - m(s)| \le     
        C |\ms{R}|(Q) \le C \sqrt{\tr \ms{G} (Q)} \norm{\Lad_2}_\ff \norm{G^\dag R \Lad_2^{-1}}_{L^2_{\ms{G}}(Q)}\,.
    \end{equation}
    We choose $t_0$ such that $m(t_0) = \max_{t \in [0,1]} m(t)$. Then \eqref{auxest_forG} implies  
    \begin{equation*}
        m(t_0) \le m(0) + C \sqrt{m(t_0)} \norm{\Lad_2}_\ff \norm{G^\dag R \Lad_2^{-1}}_{L^2_{\ms{G}}(Q)}\,,
    \end{equation*}
    which further gives, by an elementary calculation,  
    \begin{align} \label{eq:auxpri_bound}
       \Big(m(t_0)^{1/2} - \frac{C}{2}\norm{G^\dag R \Lad_2^{-1}}_{L_{\ms{G}}^2(Q)} \norm{\Lad_2}_\ff\Big)^2 \le m(0) + \frac{C^2}{4} \norm{G^\dag R \Lad_2^{-1}}^2_{L_{\ms{G}}^2(Q)} \norm{\Lad_2}_\ff^2\,.
    \end{align}
    Then we have 
    \begin{equation}\label{eq:auxpri_bound_2}
        m(t) \le C \big(m(0) + \norm{G^\dag R \Lad_2^{-1}}^2_{L_{\ms{G}}^2(Q)}\norm{\Lad_2}_\ff^2\big)\,.      
    \end{equation}

With the above estimates, the existence of a weak* continuous representative of $\ms{G}_t$ and the formula \eqref{eq:time_a_b} can be proved similarly to \cite[Lemma 8.1.2]{ambrosio2008gradient}. 
We sketch the argument for completeness. 
By \eqref{priest_qR} and \eqref{eq:auxpri_bound_2}, as well as \eqref{weak_g_inpro}, there exists a subset $E \in [0,1]$ of Lebesgue measure zero such that $\tr \ms{G}_t (\Omega) = m(t)$ on $[0,1]\backslash E$, and there holds, for any $t,s \in [0,1]\backslash E$ with  $s < t$ and $\Psi \in C^1(\Omega,\S^n)$,
    \begin{align}  \label{eq:acgqr_2} 
        | \l \ms{G}_t, \Psi \r_{\Omega}   - \l \ms{G}_s, \Psi \r_{\Omega}| & \le C \norm{\Psi}_{1,\infty} \big(|\ms{q}|(Q_s^t) + |\ms{R}|(Q_s^t)\big)
        \notag \\ & \le C |t - s|^{1/2} \big(m(0) + \norm{G^\dag q \Lad_1^\dag}^2_{L_{\ms{G}}^2(Q)} \norm{\Lad_1}_\ff^2 +  \norm{G^\dag R \Lad_2^{-1}}^2_{L_{\ms{G}}^2(Q)}\norm{\Lad_2}_\ff^2\big) \norm{\Psi}_{1,\infty}\,.
    \end{align}
    The estimate \eqref{eq:acgqr_2} allows us to uniquely 
    extend $\{\ms{G}_t\}_{t \in [0,1]\backslash E}$ to a weak*  continuous curve $\{\w{\ms{G}}_t\}_{t \in [0,1]}$ in $C^1(\Omega,\S^n)^*$. Then, by the density of $C^1(\Omega,\S^n)$ in $C(\Omega,\S^n)$ and the boundedness \eqref{eq:auxpri_bound_2} of $\{\tr \w{\ms{G}}_t (\Omega)\}_{t \in [0,1]}$, the curve $\{\w{\ms{G}}_t\}_{t \in [0,1]}$ is also weak* continuous in $\M(\Omega, \S^n)$. The formula
    \eqref{eq:time_a_b} follows from taking test functions $\Phi_\ep(x,t) = \eta_\ep(t)\Phi(t,x)$ in \eqref{eq:weak_ctneq},  where $\Phi \in C^1(Q,\S^n)$ and $\eta_\ep \in C_c^\infty((t_0,t_1),\R)$ with $0 \le \eta_\ep \le 1$, $\lim_{\ep \to 0}\eta_\ep(t) = \chi_{(t_0,t_1)}(t)$ pointwisely, and $\lim_{\ep \to 0}\eta'_\ep = \d_{t_0} - \d_{t_1}$ in the distributional sense. Recalling $\tr \ms{G}_t (\Omega) = m(t)$ a.e., by the weak*  continuity of $\w{\ms{G}}_t$, we have $\tr \w{\ms{G}}_t = m(t)$. Then, the estimate \eqref{priest_G} follows from \eqref{eq:auxpri_bound_2}. 
\end{proof}

\medskip
\noindent \textbf{Time and space scaling.}
By writing $\jj_{\Lad,Q}(\mu) = \int_0^1\jj_{\Lad,\Omega}(\mu_t)\, \rd t$ for $\mu \in \ce_\infty([0,1];\ms{G}_0,\ms{G}_1)$, the following Lemma is a simple consequence of the change of variable.

\begin{lemma}\label{lemma:timescaling} 
Let $\mu \in \ce_\infty([0,1];\ms{G}_0,\ms{G}_1)$.  It holds that 
\begin{enumerate}
    \item Let $\ms{s}(t):[0,1] \to [a,b]$ be a strictly increasing absolutely continuous map with an absolutely continuous inverse: $\ms{t} = \ms{s}^{-1}$. Then
     $\w{\mu} := \int_a^b \d_s \otimes (\ms{G}_{\ms{t}(s)}, \ms{t}'(s) \ms{q}_{\ms{t}(s)},  \ms{t}'(s) \ms{R}_{\ms{t}(s)})\, \rd s \in \ce([a,b]; \ms{G}_0,\ms{G}_1)$. Moreover, we have 
    \begin{align}  \label{eq:timescinlemma}
        \int_0^1 \ms{t}'(\ms{s}(t)) \jj_{\Lad,\Omega}(\mu_t)\, \rd t = \int_a^b \jj_{\Lad,\Omega}(\w{\mu}_s) \,\rd s\,.
    \end{align} 
    \item  Let $T$ be a 
diffeomorphism on $\R^d$ mapping from $\Omega$ to $T(\Omega)$ and suppose that there exists $\TT_{\ms{D^*}}(x): \Omega \to \L(\R^{n \t k})$ such that for $\Phi \in C_c^\infty(\R^d, \S^n)$,
\begin{align} \label{def:tdlinear}
    \TT_{\ms{D^*}}[(\ms{D^*} \Phi)\circ T] := \ms{D^*} (\Phi\circ T)\,.
\end{align}
Then $\w{\mu} := \int_0^1 \d_t \otimes T_{\#} (\ms{G}_{t}, \TT_{\ms{D}} \ms{q}_{t},  \ms{R}_{t})\, \rd t \in \ce([0,1]; T_{\#}\ms{G}_0, T_{\#}\ms{G}_1)$ on $T(\Omega)$, where $T_{\#}(\dd)$ denotes the pushforward measure by $T$,  and $\TT_{\ms{D}}$ is the transpose of $\TT_{\ms{D}^*}$ defined via  $ (\TT_{\ms{D}} q) \dd p = q \dd (\TT_{\ms{D}^*} p)\,, \ \forall p,q \in \R^{n \t k}$. 
\end{enumerate}
\end{lemma}
\begin{remark} \label{rem:assumtd}
    The condition \eqref{def:tdlinear} is nontrivial and necessary for the second statement. Indeed, there holds
    \begin{align*}
        \ms{D^*} (\Phi\circ T) = \int_{\R^d} \widehat{\ms{D}^*}(\xi \dd \na T(x))\big[\h{\Phi}(\xi)\big]e^{i \xi \dd T(x)}\, \rd \xi \,,
    \end{align*}
    by Fourier transform, where $(\xi \dd \na T(x))_j = \xi \dd \p_j T(x)$. It follows that \eqref{def:tdlinear} is equivalent to a separation of variables: $\widehat{\ms{D}^*}(\xi \dd \na T(x)) = \TT_{\ms{D^*}}(x) \circ \widehat{\ms{D}^*}(\xi)$. A sufficient condition for \eqref{def:tdlinear} is that $\widehat{\ms{D}^*}$ is homogeneous of degree $0$, or homogeneous of degree $1$ with $T(x) = a x + b$ for $a \neq 0 \in \R$ and $b \in \R^d$, which is enough for our purposes. 
\end{remark}

\begin{remark} \label{rem;space}
We connect the weighted matrix $\Lad_1$ and the space scaling. 
Let us consider $\mu \in \ce_\infty([0,1]; \ms{G}_0, \ms{G}_1)$ and $\ms{D}^*$ be homogeneous of degree one for simplicity. Define $T(x) = a x: \Omega \to a \Omega$ and $\TT_{\ms{D}} = a I$. 
By Lemma \ref{lemma:timescaling}, we have $\w{\mu} := \int_0^1 \d_t \otimes T_{\#} (\ms{G}_{t}, a \ms{q}_{t},  \ms{R}_{t}) \, \rd t \in \ce_\infty([0,1]; T_{\#}\ms{G}_0, T_{\#}\ms{G}_1)$. Then, a direct computation gives
\begin{align*}
\jj_{\Lad, [0,1] \t a \Omega}(\w{\mu}) = \int_0^1 \jj_{(a^{-1} \Lad_1,\Lad_2), a \Omega}(T_{\#}(\ms{G}_t,\ms{q}_t,\ms{R}_t)) \,\rd t =  \int_0^1 \jj_{(a^{-1} \Lad_1,\Lad_2), \Omega}(\mu_t) \,\rd t = \jj_{(a^{-1} \Lad_1,\Lad_2), Q}(\mu)\,.
\end{align*}
\end{remark}

Using Lemma \ref{lemma:timescaling} with $\ms{s}(t) = (b - a) t + a : [0,1] \to [a,b]$, $b > a > 0$, we see that for $\mu \in \ce_\infty([0,1];\ms{G}_0,\ms{G}_1)$, there exists $\w{\mu} \in \ce_\infty([a,b];\ms{G}_0,\ms{G}_1)$ such that 
\begin{align*}
\int_0^1 \jj_{\Lad,\Omega}(\mu_t)\, \rd t = (b - a) \int_a^b \jj_{\Lad,\Omega}(\w{\mu}_t)\, \rd t\,,
\end{align*}
and vice versa, which gives the equivalent characterization of ${\rm WB}_{\Lad}$: 
\begin{align} \tag{$\mathcal{P}'$}\label{eq:change_time}
{\rm WB}^2_\Lad(\ms{G}_0,\ms{G}_1) = \inf_{\ce_\infty([a,b];\ms{G}_0,\ms{G}_1)} (b - a) \int_a^b  \jj_{\Lad,\Omega}(\mu_t) \,\rd t\,, \q \ms{G}_0,\ms{G}_1 \in \M(\Omega,\S_+^n)\,.
\end{align}

\medskip
\noindent \textbf{Compactness.}
We end the discussion of basic properties of $\ce_\infty([0,1]; \ms{G}_0, \ms{G}_1)$ with a compactness result.

\begin{proposition}\label{prop:compact}
Let $\mu^n = (\ms{G}^n,\ms{q}^n,\ms{R}^n) \in \ce_\infty([0,1]; \ms{G}^n_0, \ms{G}^n_1)$, $n \ge 1$, be a sequence of measures satisfying 
    \begin{equation} \label{assp_for_prop:1}
       m:= \sup_{n \in \NN} \tr (\ms{G}_0^n) < +\infty\,, \q  M: = \sup_{n \in \NN} \jj_{\Lad,Q}(\mu^n) < +\infty \,.
    \end{equation}
    Then there exists a subsequence, still denoted by $\mu^n$, and a measure $\mu = (\ms{G},\ms{q},\ms{R}) \in \ce_\infty([0,1]; \ms{G}_0, \ms{G}_1)$ such that for every $t\in [0,1]$, $\ms{G}^n_t$ weak* converges to $\ms{G}_t$ in $\M(\Omega,\S^{n})$, and $\ms{(q^n,R^n)}$ weak* converges to $\ms{(q,R)}$ in $\M(Q,\R^{n \t k} \t \MM^n)$. Moreover, it holds that, for $0\le a < b \le 1$, 
\begin{align}  \label{eq:lsc_jalpha}
    \jj_{\Lad,Q_a^b}(\mu) \le \liminf_{n \to \infty} \jj_{\Lad,Q_a^b}(\mu^n)\,.
    \end{align}
\end{proposition}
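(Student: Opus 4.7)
The plan is to first convert the uniform energy bound into uniform mass/variation bounds, then apply Banach--Alaoglu plus an Ascoli--Arzel\`a argument in time, and finally pass to the limit in the continuity equation and apply lower semicontinuity to the energy.

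First, combining the hypotheses $\sup_n \tr \ms{G}_0^n(\Omega) \le m$ and $\sup_n \jj_{\Lad,Q}(\mu^n)\le M$ with \eqref{priest_G} from Proposition \ref{prop:disinte_path} yields a uniform bound
\begin{align*}
\sup_{n\in\NN}\sup_{t\in[0,1]} \tr \ms{G}^n_t(\Omega) \le C(m+M|\Lad_2|^2).
\end{align*}
In particular $\sup_n \tr \ms{G}^n(Q) < +\infty$, so $\sup_n |\ms{G}^n|(Q) < +\infty$ by \eqref{eq:trtotal}. Feeding this bound and the energy bound $M$ into \eqref{priest_qR} of Lemma \ref{lem:priorimu} gives uniform bounds on $|\ms{q}^n|(Q)$ and $|\ms{R}^n|(Q)$. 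By Banach--Alaoglu, after passing to a subsequence, $(\ms{q}^n,\ms{R}^n)\rightharpoonup^*(\ms{q},\ms{R})$ in $\M(Q,\R^{n\t k}\t \MM^n)$, and $\ms{G}_0^n\rightharpoonup^*\ms{G}_0$, $\ms{G}_1^n\rightharpoonup^*\ms{G}_1$ in $\M(\Omega,\S_+^n)$.

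Next, I would upgrade this to pointwise-in-$t$ convergence of the weak--star continuous representatives $\w{\ms{G}}^n_t$. The equicontinuity estimate \eqref{eq:acgqr_2}, when applied to $\mu^n$, provides a constant $K$ (depending only on $m$, $M$, $|\Lad_1|$, $|\Lad_2|$) such that
\begin{align*}
\big|\l \w{\ms{G}}_t^n,\Psi\r_\Omega - \l \w{\ms{G}}_s^n,\Psi\r_\Omega\big| \le K|t-s|^{1/2}\,\norm{\Psi}_{1,\infty}\q \forall s,t\in[0,1],\ \forall\Psi\in C^1(\Omega,\S^n),
\end{align*}
uniformly in $n$. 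Fixing a countable subset $\{\Psi_k\}\subset C^1(\Omega,\S^n)$ that is dense in $C(\Omega,\S^n)$ (by Stone--Weierstrass), the scalar functions $t\mapsto \l \w{\ms{G}}^n_t,\Psi_k\r_\Omega$ are uniformly bounded and uniformly $1/2$-H\"older equicontinuous. A standard Ascoli--Arzel\`a plus diagonal extraction therefore produces a single subsequence along which $\l \w{\ms{G}}^n_t,\Psi_k\r_\Omega$ converges for every $k$ and every $t\in[0,1]$; combined with the uniform mass bound this yields $\w{\ms{G}}^n_t \rightharpoonup^* \ms{G}_t$ in $\M(\Omega,\S^n)$ for every $t\in[0,1]$, with the limit lying in $\M(\Omega,\S^n_+)$ by closedness of that cone. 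The endpoint identifications $\ms{G}_0,\ms{G}_1$ from the previous paragraph are recovered by continuity.

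It then suffices to pass to the limit in the weak formulation \eqref{eq:weak_ctneq} written for each $\mu^n$ on $[0,1]$: the left-hand side converges by weak--star convergence of $(\ms{G}^n,\ms{q}^n,\ms{R}^n)$ against test functions $\Phi\in C^1(Q,\S^n)$, and the right-hand side converges by pointwise-in-$t$ convergence at $t=0,1$. Hence $\mu=(\ms{G},\ms{q},\ms{R})\in \ce([0,1];\ms{G}_0,\ms{G}_1)$. Finally, for the lower semicontinuity \eqref{eq:lsc_jalpha}, I would use the dual representation \eqref{eq:fenconj_cost}: for any $(u,W)\in C(Q_a^b,\R^{n\t k}\t\MM^n)$,
\begin{align*}
\l(\ms{q},\ms{R}),(u,W)\r_{Q_a^b}-\jj^*_{\Lad,Q_a^b}(\ms{G},u,W) = \lim_{n\to\infty}\big(\l(\ms{q}^n,\ms{R}^n),(u,W)\r_{Q_a^b}-\jj^*_{\Lad,Q_a^b}(\ms{G}^n,u,W)\big) \le \liminf_{n\to\infty}\jj_{\Lad,Q_a^b}(\mu^n),
\end{align*}
where the convergence of the duality pairings is immediate and the convergence of $\jj^*_{\Lad,Q_a^b}(\ms{G}^n,u,W)=\frac12\l \ms{G}^n,(u\Lad_1)\otimes(u\Lad_1)+(W\Lad_2)\otimes(W\Lad_2)\r_{Q_a^b}$ follows from $\ms{G}^n\rightharpoonup^*\ms{G}$ against the continuous symmetric test tensor. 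Taking the supremum in $(u,W)$ yields \eqref{eq:lsc_jalpha}; setting $a=0$, $b=1$ shows $\mu\in\ce_\infty$.

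The main obstacle I anticipate is the pointwise-in-$t$ weak--star convergence of $\ms{G}^n_t$: the action functional only controls integrals in $t$ and the time marginals of $|\ms{G}^n|$, so without the explicit H\"older equicontinuity estimate \eqref{eq:acgqr_2} one only obtains a.e.\ convergence, which would not suffice to identify initial and final data along the subsequence. The remaining steps are essentially routine once the right uniform bounds and the dual characterization of $\jj_{\Lad,Q_a^b}$ are in hand.
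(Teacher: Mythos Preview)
Your overall strategy is sound and largely parallels the paper's, with one methodological difference and one genuine gap.

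For the pointwise-in-$t$ convergence of $\ms{G}^n_t$ you run an Ascoli--Arzel\`a argument based on the H\"older equicontinuity \eqref{eq:acgqr_2}. The paper takes a different route: it writes the integrated identity
\[
\l \ms{G}^n_t,\Psi\r_\Omega \;=\; \l \ms{G}^n_0,\Psi\r_\Omega + \int_0^t\big(\l \ms{q}^n_s,\ms{D}^*\Psi\r_\Omega + \l \ms{R}^n_s,\Psi\r_\Omega\big)\rd s
\]
and passes to the limit using the already-established weak--star convergence of $\ms{G}^n_0$ and of $(\ms{q}^n,\ms{R}^n)|_{Q_0^t}$. Both approaches are valid; yours is the classical compactness argument and avoids the restriction issue below for this particular step, while the paper's avoids the diagonal extraction but relies on restriction convergence, which it proves anyway for the lsc claim.

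The gap is in your lower-semicontinuity step on subintervals. You assert that $\l(\ms{q}^n,\ms{R}^n),(u,W)\r_{Q_a^b}\to\l(\ms{q},\ms{R}),(u,W)\r_{Q_a^b}$ is ``immediate'', but weak--star convergence in $\M(Q)$ does \emph{not} imply weak--star convergence of restrictions to $Q_a^b$: a priori mass could concentrate on the time-slices $\{a\}\times\Omega$ or $\{b\}\times\Omega$ (think of $\d_{a-1/n}\rightharpoonup^*\d_a$ on $[0,1]$). The paper addresses exactly this point: it first records the uniform estimate $|\mu^n|([t_0,t_1]\times\Omega)\le C|t_1-t_0|^{1/2}$, which follows from \eqref{priest_qR} together with the uniform bound on $\tr\ms{G}^n_t(\Omega)$, and then uses a smooth time-cutoff $\eta$ supported in $[a,b]$ with $\eta\equiv 1$ on $[a+\ep,b-\ep]$ to show $\mu^n|_{Q_a^b}\rightharpoonup^*\mu|_{Q_a^b}$. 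After that, \eqref{eq:lsc_jalpha} is immediate from the weak--star lower semicontinuity of $\jj_{\Lad,Q_a^b}$ in Lemma~\ref{lem: conj_cvx}, with no need to unfold the dual representation. You already have all the ingredients to carry out this cutoff argument; it simply needs to be written out rather than declared immediate.
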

\begin{proof}
    By \eqref{assp_for_prop:1}, up to a subsequence, we can let $\ms{G}^n_0$ weak* converge to some $\ms{G}_0 \in \M(\Omega,\S^n_+)$. It is also clear from a priori estimates \eqref{priest_qR} and \eqref{priest_G}, as well as the assumption \eqref{assp_for_prop:1}, that 
 $\{\mu^n\}_{n \in \NN}$ is bounded in $\M(Q,\xx)$. Hence, 
    there exists a subsequence of $\{\mu^n\}_{n \in \NN}$, still indexed by $n$, weak* converging to some $\mu \in \M(Q,\xx)$. We next prove that the restriction of $\mu^n$ on $Q_a^b$, i.e., $\mu^n|_{Q_a^b}$, weak* converges to $\mu|_{Q_a^b}$ in $\M(Q_a^b,\xx)$ for any $ 0 \le a \le b \le 1$. For this, again by \eqref{priest_qR} and \eqref{priest_G}, we have, for some $C > 0$,
    \begin{equation} \label{eq:uni_inte_mu}
        |\mu^n|([t_0,t_1] \t \Omega) \le C|t_1 - t_0|^{1/2}\,, \q \forall 0 \le t_0 \le t_1 \le 1\,,
    \end{equation}
    which also holds for $\mu$. Let $\eta(t)$ be a smooth function, compactly supported in $[a,b]$, with $|\eta(t)| \le 1$ and $\eta = 1$ on $[a+\ep, b - \ep]$ for some small $\ep$. Then, for any $\Xi \in C(Q_a^b,\xx)$, we define $\w{\Xi}(t,x) = \eta (t) \Xi(t,x) \in C(Q,\xx)$. The following estimate readily follows from the properties of $\eta$ and the estimate \eqref{eq:uni_inte_mu}: 
    \begin{equation*}
        \big|\l \mu^n, \Xi\r_{Q_a^b} - \l \mu, \Xi\r_{Q_a^b}\big| \le  \big| \big\l  \mu^n, \w{\Xi}\big\r_{Q} - \big\l  \mu, \w{\Xi}\big\r_{Q} \big| + C \ep^{1/2}\,.
    \end{equation*}
   Since $\mu^n$ weak* converges to $\mu$ in $\M(Q,\xx)$ and $\ep$ is arbitrary, we have $\big|\l \mu^n, \Xi \r_{Q_a^b} - \l  \mu, \Xi \r_{Q_a^b}\big| \to 0$ as $n \to \infty$ for $\Xi \in C(Q_a^b,\xx)$. 
 Then, \eqref{eq:lsc_jalpha} follows from the lower semicontinuity of $\jj_{\Lad,Q_a^b}(\mu)$. We now show the weak* convergence of $\ms{G}^n_t$ for every $t\in [0,1]$. We note,  by taking $\Phi(s,x) = \chi_{[0,t]}(s) \Psi(x)$ in \eqref{eq:time_a_b} with $\Psi(x)\in C^1(\Omega, \S^n)$,
    \begin{equation*}
        \int_0^t \Big( \int_\Omega \ms{D}^* \Psi \dd \rd \ms{q}_s^n   +  \int_\Omega \Psi  \dd  \rd \ms{R}_s^n \Big) \rd s = \int_{\Omega}  \Psi  \dd \rd \ms{G}^n_{t} -  \int_{\Omega}  \Psi \dd \rd \ms{G}^n_{0}\,, \q \forall \Psi \in C^1(\Omega,\S^n)\,.
    \end{equation*} 
    Then, using the weak* convergences of $\ms{G}^n_0$ in $\M(\Omega,\S^n)$ and   
    $(\ms{q}^n,\ms{R}^n)|_{Q_0^t}$ in $\M(Q_0^t,\R^{n \t k}\t \MM^n)$, we get the convergence of $ \l \ms{G}^n_{t}, \Psi \r_{\Omega}$ as $n \to \infty$. The proof is completed  by the density of $C^1(\Omega,\S^n)$ in $C(\Omega,\S^n)$ and the uniform boundedness of $\tr \ms{G}^n_t (\Omega)$ with respect to $n$ from \eqref{priest_G}.
\end{proof}

\section{Properties of weighted Wasserstein-Bures metrics}   \label{sec:existenandopt} 

This section is devoted to the investigation of the convex optimization problem \eqref{eq:distance}. We shall first  show the existence of the minimizer and derive the corresponding optimality condition. We then explore its primal-dual formulations
in more detail, which will lead to a Riemannian interpretation 
of ${\rm WB}_{\Lad}$ in Section \ref{sec:Riema_iter}. Finally, we consider the dependence of ${\rm WB}_{\Lad}$ on the weighted matrix $\Lad$. 

\medskip 

\noindent \textbf{Existence of minimizer and optimality condition.}
For our purpose, let us first define the  Lagrangian of \eqref{eq:distance} with the multiplier $\Phi \in C^1(Q, \S^n)$:
\begin{align*}
    \L(\mu, \Phi) := \jj_{\Lad,Q}(\mu) - \l \mu, (\p_t \Phi, \ms{D}^*  \Phi, \Phi) \r_Q + \l \ms{G}_1, \Phi_1 \r_\Omega - \l \ms{G}_0, \Phi_0 \r_\Omega\,,
\end{align*}
which allows us to write 
\begin{equation*} 
    {\rm WB}^2_\Lad(\ms{G}_0,\ms{G}_1) =  \inf_{\mu \in \M(Q,\xx)} \sup_{\Phi \in C^1(Q,\S^n)} \L(\mu, \Phi)\,.
\end{equation*} 
By changing the order of $\sup$ and $\inf$,  a formal calculation via integration by parts gives the dual problem:
\begin{align} \label{def:jadp}
{\rm WB}_{\Lad}^2(\ms{G}_0,\ms{G}_1) & \ge   
    \sup_\Phi \inf_\mu \L(\mu, \Phi) \notag \\
& = \sup_{\Phi}  \Big\{ \l \ms{G}_1, \Phi_1 \r_\Omega - \l \ms{G}_0, \Phi_0 \r_\Omega  \,; \ \p_t \Phi + \frac{1}{2} (\ms{D}^* \Phi) \Lad_1^2 (\ms{D}^* \Phi)^{\rm T} + \frac{1}{2} \Phi \Lad_2^2 \Phi   \preceq 0 \Big\}\,. 
\end{align}
We next use the Fenchel-Rockafellar theorem (Lemma \ref{prop:fench_rock}) to show that the 
duality gap is zero, which will also give the existence of the minimizer to \eqref{eq:distance} and the optimality conditions. For this, we define 
\begin{equation} \label{def:colad}
    C(Q,\mathcal{O}_\Lad) := \{\vp \in C(Q,\xx)\,;\ \vp(x) \in \mathcal{O}_\Lad\,, \ \forall  x \in Q\}\,,
\end{equation}
with $\mc{O}_\Lad$ given in \eqref{eq:closed_convex_set},
which is a closed convex subset  of $C(Q,\xx)$. We then define lower semicontinuous convex 
functions: 
$f(\Phi) = \l \ms{G}_1, \Phi_1 \r_\Omega - \l \ms{G}_0, \Phi_0 \r_\Omega$ for $\Phi \in  C^1(Q,\S^n)$ and $g(\Xi) = \iota_{C(Q,\mathcal{O}_\Lad)}(\Xi)$ for $\Xi \in C(Q,\xx)
$. We also introduce the bounded linear operator: 
$L: \Phi \in C^1(Q, \S^n) \to (\p_t \Phi, \ms{D}^* \Phi, \Phi) \in C(Q,\xx)$ with the dual operator $L^*$. These notions help us to write \eqref{def:jadp} as 
$
    \sup\{f(\Phi) - g(L \Phi)\,;\ \Phi \in C^1(Q,\S^n)\}\,.
$

We now verify the condition in Lemma \ref{prop:fench_rock}. 
We consider $\Phi = -\ep t I + \frac{\ep}{2} I \in C^1(Q,\S^n)$. It is clear that $f(\Phi)$ is finite and 
$L \Phi = (- \ep I, 0,  -\ep t I + \frac{\ep}{2} I)$ by $\ms{D}^*(I) = 0$. By a simple calculation, we have 
\begin{align*}
    \p_t \Phi + \frac{1}{2} (\ms{D}^*\Phi) \Lad_1^2 (\ms{D}^* \Phi)^{{\rm T}} + \frac{1}{2} \Phi \Lad_2^2 \Phi 
     &= - \ep I+ \frac{1}{2}\ep^2 \big(- t + \frac{1}{2}\big)^2 \Lad_2^2 \preceq - \ep I+ \frac{1}{8} \ep^2 \Lad_2^2\,,
\end{align*}
which implies that for small enough $\ep$ and any $(t,x)\in Q$, $(L \Phi)(t,x)$ is in the interior of $\mathcal{O}_\Lad$ and hence $g$ is continuous at $L \Phi$. Then 
Lemma \ref{prop:fench_rock} readily gives
\begin{equation} \label{eq:auxeq_o}
     \min_{\mu \in \M(Q,\xx)}f^*(L^*\mu) + g^*(\mu) = \sup_{\Phi \in C^1(Q,\S^n)} f(\Phi) - g(L \Phi)\,, 
\end{equation}
where $f^*(L^*\mu) = \sup\{\l \mu, L\Phi \r_Q - f(\Phi)\,;\ \Phi \in C^1(Q,\S^n)\}$ can be easily computed as $\iota_{\ce([0,1];\ms{G}_0,\ms{G}_1)}$ by linearity of $f$, while 
$g^*(\mu)$ is nothing else than $\jj_{\Lad,Q}(\mu)$ by the following lemma, which is a 
direct application of general results \cite{bouchitte1988integral,rockafellar1971integrals}. 
We sketch the proof in Appendix \ref{app_B} for completeness.

\begin{lemma} \label{lem: conj_cvx}
Let $\mathcal{X}$ be a compact separable metric space 
and $C(\mathcal{X},\mathcal{O}_\Lad)$ be defined in \eqref{def:colad}.
Then, we have  
\begin{equation} \label{eq:conj_gmu}
    \iota^*_{C(\mathcal{X},\mathcal{O}_\Lad)} = \sup_{\Xi \in L_{|\mu|}^\infty(\mathcal{X},\mathcal{O}_\Lad)}\l \mu, \Xi \r_{\mathcal{X}} = \jj_{\Lad,\mathcal{X}} (\mu)\,,\q \text{for}\ \mu \in \mc{M}(\mc{X},\xx)\,,
\end{equation}
which is proper convex and lower semicontinuous with respect to the weak* topology of $\M(\mathcal{X},\xx)$. Moreover, the subgradient $\p \jj_{\Lad,\mathcal{X}}(\mu)$ in  $C(\mathcal{X},\xx)$ is given as follows:
\begin{equation} \label{eq:subg_conj_gmu}
    \p \jj_{\Lad,\mathcal{X}}(\mu)|_{C(\mathcal{X},\xx)} = \left\{\Xi \in C(\mathcal{X}, \mathcal{O}_\Lad)\,; \ \Xi(x) \in \p J_\Lad(\mu_\lad)(x)\,, \ \lad\text{--a.e.}\right\}\,,
\end{equation}
which is independent of the choice of the reference measure $\lad$ such that $|\mu| \ll \lad$.
\end{lemma}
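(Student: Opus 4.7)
The plan is to reduce the identity $\iota^*_{C(\mathcal{X},\mathcal{O}_\Lad)}(\mu) = \jj_{\Lad,\mathcal{X}}(\mu)$ to a pointwise Fenchel equality, using a classical sup--integral interchange for normal integrands. Writing $\rd \mu = \sigma \rd |\mu|$ with $|\sigma| = 1$, the plan proceeds in three steps: (i) enlarge the test class from $C(\mathcal{X}, \mathcal{O}_\Lad)$ to $L^\infty_{|\mu|}(\mathcal{X}, \mathcal{O}_\Lad)$ without changing the supremum; (ii) invoke the interchange theorem to convert the sup of the integral into the integral of the sup, which is exactly $J_\Lad(\sigma)$; (iii) read off the subgradient from the pointwise equality case of the Fenchel inequality.

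For step (i), the inequality $\iota^*_{C(\mathcal{X},\mathcal{O}_\Lad)}(\mu) \le \sup_{L^\infty_{|\mu|}(\mathcal{X},\mathcal{O}_\Lad)} \l \mu, \Xi \r_\mathcal{X}$ is trivial. For the reverse, I would fix $\Xi \in L^\infty_{|\mu|}(\mathcal{X},\mathcal{O}_\Lad)$ and use Lusin's theorem (applied componentwise in $\xx$) to produce uniformly bounded continuous $\Xi_n: \mathcal{X}\to \xx$ that converge to $\Xi$ in $|\mu|$-measure. To enforce the constraint, observe that $\mathcal{O}_\Lad$ is a closed convex subset of the Hilbert space $\xx$ (it contains the open ball around $(-\ep I, 0, 0)$ for small $\ep > 0$, as exploited in the verification of Fenchel--Rockafellar's hypotheses just before \eqref{eq:auxeq_o}), so the metric projection $P: \xx \to \mathcal{O}_\Lad$ is a $1$-Lipschitz retraction. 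Replacing $\Xi_n$ by $P\circ \Xi_n \in C(\mathcal{X},\mathcal{O}_\Lad)$ and passing to the limit via dominated convergence gives $\l \mu, \Xi\r_\mathcal{X} \le \iota^*_{C(\mathcal{X},\mathcal{O}_\Lad)}(\mu)$.

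For step (ii), the identity
\begin{equation*}
\sup_{\Xi \in L^\infty_{|\mu|}(\mathcal{X},\mathcal{O}_\Lad)} \int_\mathcal{X} \Xi(x) \dd \sigma(x) \rd |\mu|(x) = \int_\mathcal{X} \sup_{\xi \in \mathcal{O}_\Lad} \xi \dd \sigma(x) \rd |\mu|(x) = \int_\mathcal{X} J_\Lad(\sigma(x))\rd |\mu|(x)
\end{equation*}
is a direct instance of Rockafellar's interchange theorem for normal integrands (see \cite{rockafellar1971integrals} or the refinement in \cite{bouchitte1988integral}); the first supremum is attained pointwise by measurable selection because the integrand $(x,\xi) \mapsto -\xi \dd \sigma(x) + \iota_{\mathcal{O}_\Lad}(\xi)$ is a normal integrand and the outer supremum over any pointwise selection is realised on $L^\infty_{|\mu|}$-bounded selectors (using that $\sigma$ is uniformly bounded). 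The right-hand side is $\jj_{\Lad, \mathcal{X}}(\mu)$ with reference measure $\lad = |\mu|$, and the already-observed independence from $\lad$ (immediately after \eqref{def:costmeasure}) transfers the identity to any admissible reference measure. Convexity and weak--star lower semicontinuity of $\jj_{\Lad,\mathcal{X}}$ are then automatic, being a pointwise supremum of weak--star continuous affine functionals $\mu \mapsto \l \mu, \Xi\r_\mathcal{X}$ for $\Xi \in C(\mathcal{X},\mathcal{O}_\Lad)$, and properness holds because $\mu = 0$ lies in the domain.

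For the subgradient characterisation, Lemma~\ref{lem:subgrad_relation} gives $\Xi \in \p \jj_{\Lad, \mathcal{X}}(\mu)|_{C(\mathcal{X},\xx)}$ iff $\Xi \in C(\mathcal{X}, \mathcal{O}_\Lad)$ and $\l \mu, \Xi\r_\mathcal{X} = \jj_{\Lad, \mathcal{X}}(\mu)$. Using the formula from step (ii) together with the pointwise Fenchel inequality $\Xi(x) \dd \mu_\lad(x) \le J_\Lad(\mu_\lad(x))$ (valid $\lad$-a.e.\,since $\Xi(x) \in \mathcal{O}_\Lad$), the global equality forces $\Xi(x)\dd \mu_\lad(x) = J_\Lad(\mu_\lad(x))$ pointwise $\lad$-a.e., which by Lemma~\ref{lem:subgrad_relation} applied to $J_\Lad$ at $\mu_\lad(x)$ is precisely $\Xi(x) \in \p J_\Lad(\mu_\lad(x))$. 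Independence from $\lad$ is a consequence of the $1$-homogeneity of $J_\Lad$ from Proposition~\ref{prop:subgrad_J}: the set $\p J_\Lad$ is invariant under positive rescaling of its argument, and changing the reference measure only rescales $\mu_\lad$ by the positive Radon--Nikodym density. The main technical obstacle is step (i): intuitively obvious, it must be arranged so that the continuous approximants land in $\mathcal{O}_\Lad$ without losing control of norms, which is handled cleanly by the Hilbert-space projection since $\mathcal{O}_\Lad$ is closed and convex.
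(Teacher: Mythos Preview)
Your proposal is correct and follows essentially the same route as the paper's proof: both use Lusin's theorem followed by the metric projection onto the closed convex set $\mathcal{O}_\Lad$ to enlarge the test class from continuous to bounded measurable functions, then pass to a pointwise supremum (the paper says ``by approximation'', you cite Rockafellar's interchange theorem explicitly), and finally deduce the subgradient characterisation from the pointwise Fenchel equality via Lemma~\ref{lem:subgrad_relation}. The only cosmetic difference is that you spell out the interchange step and the $\lad$-independence of the subgradient more carefully than the paper does.
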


By the above arguments, we have shown the following result. 

\begin{theorem} \label{thm:strong_dual}
    The optimization problem \eqref{eq:distance} always admits a minimizer $\mu \in \ce([0,1];\ms{G}_0,\ms{G}_1)$ and a dual formulation with zero duality gap:
    \begin{equation} \label{eq:dualforkb}
       {\rm WB}^2_{\Lad}(\ms{G}_0,\ms{G}_1) = \sup_{\Phi \in C^1(Q,\S^n)} \left\{ \l \ms{G}_1,  \Phi_1 \r_{\Omega} - \l \ms{G}_0,  \Phi_0 \r_\Omega - \iota_{C(Q,\mathcal{O}_\Lad)}(\p_t \Phi, \ms{D}^* \Phi, \Phi)   \right\}\,,
    \end{equation}
    where the $\sup$ is attained at $\Phi \in C^1(Q,\S^n)$ if and only if     
    there exists $\mu = \ms{(G,q,R)} \in \ce([0,1];\ms{G}_0,\ms{G}_1)$ such that 
    \begin{align} \label{opt_a}
        q_\lad =  G_\lad (\ms{D}^* \Phi) \Lad_1^2 \,, \q R_\lad = G_\lad \Phi \Lad_2^2\,,
    \end{align}
    and 
    \begin{equation} \label{opt_b}
      G_\lad \dd \Big(\p_t \Phi + \frac{1}{2} (\ms{D}^* \Phi) \Lad_1^2 (\ms{D}^* \Phi)^{{\rm T}} + \frac{1}{2} \Phi \Lad_2^2 \Phi \Big) = 0 \,,
    \end{equation}
    for $\lad$-a.e.\,$(t,x) \in Q$. In this case, $\mu$ is also the minimizer to the problem \eqref{eq:distance}. 
\end{theorem}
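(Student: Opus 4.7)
The plan is to finish applying the Fenchel--Rockafellar framework whose setup has just been completed. Since the qualification hypothesis of Proposition \ref{prop:fench_rock} was verified via the explicit choice $\Phi = -\ep t I + \frac{\ep}{2} I$ (with $\ep > 0$ small), the equality \eqref{eq:auxeq_o} is already in hand and its left-hand infimum is attained. Two tasks remain: (i) identify the conjugates $f^*(L^*\mu)$ and $g^*(\mu)$ with the ingredients of the primal \eqref{eq:distance}; and (ii) translate the abstract primal--dual optimality pair afforded by Proposition \ref{prop:fench_rock} into the explicit relations \eqref{opt_a}--\eqref{opt_b}.

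Task (i) is immediate: linearity of $f$ forces $f^*(L^*\mu) = 0$ precisely when $L^*\mu$ equals the representing functional of $f$ in $C^1(Q,\S^n)^*$, namely $L^*\mu = \d_1(t)\otimes \ms{G}_1 - \d_0(t)\otimes \ms{G}_0$; unfolding the definition of $L^*$, this is exactly the distributional continuity equation $\mu \in \ce([0,1];\ms{G}_0,\ms{G}_1)$, while otherwise $f^*(L^*\mu) = +\infty$. Lemma \ref{lem: conj_cvx} supplies $g^*(\mu) = \jj_{\Lad,Q}(\mu)$. Substituting these into \eqref{eq:auxeq_o} identifies its left side with \eqref{eq:distance} and its right side with the dual \eqref{eq:dualforkb}, yielding the zero duality gap and the existence of a minimizer $\mu\in\ce([0,1];\ms{G}_0,\ms{G}_1)$ simultaneously.

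For task (ii), I would invoke the second half of Proposition \ref{prop:fench_rock}: $\Phi$ attains the supremum iff there exists $\mu$ with $L\Phi \in \p g^*(\mu)$ and $L^*\mu \in \p f(-\Phi)$. Since $f$ is linear, $\p f(-\Phi)$ is the singleton $\{\d_1\otimes \ms{G}_1 - \d_0\otimes \ms{G}_0\}$, so the second relation is nothing but $\mu \in \ce([0,1];\ms{G}_0,\ms{G}_1)$. Lemma \ref{lem: conj_cvx} then characterizes the first inclusion as the conjunction of the pointwise feasibility $L\Phi \in C(Q,\mathcal{O}_\Lad)$---which is exactly the Hamilton--Jacobi--type inequality recorded in \eqref{def:jadp}---and the pointwise subgradient inclusion $L\Phi(t,x) \in \p J_\Lad(\mu_\lad(t,x))$ for $\lad$-a.e.\,$(t,x)$. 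Reading the latter off formula \eqref{eq:chacsubgradjal} of Proposition \ref{prop:subgrad_J} with $(X,Y,Z) = (G_\lad, q_\lad, R_\lad)$ and $(A,B,C) = (\p_t \Phi, \ms{D}^* \Phi, \Phi)$ immediately yields \eqref{opt_a} together with the complementary slackness \eqref{opt_b}. Equality of the attained primal and dual values finally shows that any such $\mu$ is a minimizer of \eqref{eq:distance}.

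The single delicate point is bookkeeping: Proposition \ref{prop:fench_rock} is stated for $\sup_x -f(-x) - g(Lx)$ while we work with $\sup_\Phi f(\Phi) - g(L\Phi)$. Linearity of $f$ makes $-f(-\Phi) = f(\Phi)$, so the substitution $x = \Phi$ is consistent, but this must be respected when reading the subgradient condition $L^*\mu \in \p f(-\Phi)$. Beyond this, there is no substantive obstacle: the nontrivial convex-analytic content---the explicit form of $J_\Lad$ and $\p J_\Lad$ from Proposition \ref{prop:subgrad_J} and the pointwise characterization of $\p g^*$ from Lemma \ref{lem: conj_cvx}---has already been prepared, and the theorem is essentially a packaging of these ingredients through the Fenchel--Rockafellar identity.
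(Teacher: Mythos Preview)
Your proposal is correct and follows essentially the same approach as the paper: the paper's proof is precisely the Fenchel--Rockafellar setup laid out in the paragraphs preceding the theorem, with the conjugates identified via linearity of $f$ and Lemma \ref{lem: conj_cvx}, and the optimality conditions read off the second half of Proposition \ref{prop:fench_rock} together with the pointwise subgradient formula \eqref{eq:chacsubgradjal}. Your explicit attention to the sign bookkeeping in matching Proposition \ref{prop:fench_rock}'s convention to the present dual is a helpful clarification not spelled out in the paper.
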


As a consequence of Lemma \ref{lem: conj_cvx} and the dual formulation \eqref{eq:dualforkb}, we have the sublinearity and the weak* lower semicontinuity of ${\rm WB}^2_{\Lad}(\dd,\dd)$. 
    
\begin{corollary}\label{lem:lsckbalpa}
 ${\rm WB}^2_{\Lad}(\dd,\dd)$ is sublinear: for $\alpha > 0$, $\ms{G}_0,\ms{G}_1,\w{\ms{G}}_0,\w{\ms{G}}_1 \in \M(\Omega,\S^n_+)$, there holds  
\begin{align} \label{sublienar}
    {\rm WB}^2_{\Lad}\big(\alpha \ms{G}_0, \alpha\ms{G}_1\big) = \alpha {\rm WB}^2_{\Lad}\big(\ms{G}_0, \ms{G}_1\big)\,,\q  {\rm WB}^2_{\Lad}\big(\ms{G}_0 +\w{\ms{G}}_0, \ms{G}_1 + \w{\ms{G}}_1\big) \le {\rm WB}^2_{\Lad}\big(\ms{G}_0, \ms{G}_1\big) +  {\rm WB}^2_{\Lad}\big(\w{\ms{G}}_0, \w{\ms{G}}_1\big)\,.
\end{align}
Moreover, ${\rm WB}_{\Lad}$ is lower semicontinuous with respect to the weak* topology, that is, for any sequences $\{\ms{G}^n_0\}_{n \in \NN}$ and  $\{\ms{G}^n_1\}_{n \in \NN}$ in $\M(\Omega,\S_+^n)$ that weak* converge to measures $\ms{G}_0, \ms{G}_1 \in \M(\Omega,\S_+^n)$, respectively, there holds 
    \begin{align}  \label{eq:kblsc}
        {\rm WB}_{\Lad} (\ms{G}_0,\ms{G}_1) \le \liminf_{n \to 0} {\rm WB}_{\Lad} (\ms{G}^n_0,\ms{G}^n_1)\,.
    \end{align}
\end{corollary}
\begin{proof}
Noting that $\jj_{\Lad,Q}(\mu)$ is positively homogeneous and convex, and hence sublinear, the sublinearity of ${\rm WB}^2_{\Lad}(\dd,\dd)$ follows from definition \eqref{eq:distance} and the linearity of the continuity equation.
For the weak* lower
semicontinuity, by \eqref{eq:dualforkb}, for any $\Phi \in C^1(Q,\S^n)$ with $\iota_{C(Q,\mathcal{O}_\Lad)}(\p_t \Phi, \ms{D}^* \Phi, \Phi) = 0$, there holds
    \begin{align} \label{kblac_aux_1}
      \liminf_{n \to \infty}  {\rm WB}^2_{\Lad}(\ms{G}^n_0,\ms{G}^n_1) \ge  \liminf_{n \to \infty} \l \ms{G}^n_1,  \Phi_1 \r_{\Omega} - \l \ms{G}^n_0,  \Phi_0 \r_\Omega = \l \ms{G}_1,  \Phi_1 \r_{\Omega} - \l \ms{G}_0,  \Phi_0 \r_\Omega\,,
    \end{align}
    by the weak* convergence of $\ms{G}_0^n$ and $\ms{G}_1^n$. Then \eqref{eq:kblsc} follows by taking the $\sup$ of \eqref{kblac_aux_1} over admissible $\Phi$. 
\end{proof}

In addition, we have the following explicit characterization of the 
minimizer (i.e., geodesic; see Corollary \ref{cor:geo_me}) to \eqref{eq:distance} for inflating measures from optimality conditions \eqref{opt_a} and \eqref{opt_b}, which extends \cite[Theorem 5]{brenier2020optimal} with a much simpler argument.
For $\ms{G} \in \M(\Omega, \S^n_+)$ and $A \in \S_+^n$, we denote by $\ms{G}^A$ the inflating measure $A \ms{G} A \in \M(\Omega, \S^n_+)$. 

\begin{proposition} \label{prop:inflatingmeaure}
For $\ms{G} \in \M(\Omega, \S^n_+)$ and matrices $A_0, A_1 \in \S_+^n$, we have
\begin{equation} \label{eq:expligeo}
    {\rm WB}_\Lad^2 \big(\ms{G}^{A_0},\ms{G}^{A_1}\big) = 2 \tr \big(\Lad_2^{-1}(A_1 - A_0)\ms{G}(\Omega)(A_1 - A_0)\Lad_2^{-1} \big)\,,
\end{equation}
with the minimizer $(\ms{G}_*,\ms{q}_*,\ms{R}_*) := 
(\ms{G}^{A_t}, 0, 2 A_t \ms{G} (A_1 - A_0)) \in \M(Q, \xx)$, where $A_t := tA_1 + (1- t)A_0$ for $t \in [0,1]$.
\end{proposition}

\begin{proof}
Let us first assume that $A_0$ and $A_1$ are invertible. By a direct calculation, we have  
\begin{equation*}
   \p_t \ms{G}^{\ms{A}_t} = (A_1 - A_0) \ms{G} A_t + A_t \ms{G} (A_1 - A_0) \,.
\end{equation*}
We define $\Phi = 2 A_t^{-1} (A_1- A_0)\Lad_2^{-2}$ and find 
$\ms{R}_* = \ms{G}^{A_t} \Phi \Lad_2^2$. It is also easy to see that 
$(\ms{G}_*,\ms{q}_*,\ms{R}_*)$ defined above is in the set 
 $\ce\big([0,1]; \ms{G}^{A_0},\ms{G}^{A_1}\big)$.  Moreover, recalling $ ((A + \ep H)^{-1} - A^{-1})/\ep \to - A^{-1}H A^{-1}$ as $\ep \to 0$ for invertible $A$ and $H \in \MM^n$ \cite{bhatia2013matrix}, we have 
\begin{equation*}
    \p_t \Phi =  - 2  A_t^{-1} (A_1 - A_0)A_t^{-1}(A_1- A_0) \Lad_2^{-2} = - \Phi \Lad_2^2 \Phi/2\,.
\end{equation*}
By the above computations, we have verified the optimality conditions \eqref{opt_a} and \eqref{opt_b}, which means that the measure $(\ms{G}_*,\ms{q}_*,\ms{R}_*)$ is the desired minimizer. Then, we can further compute
\begin{equation*}
    {\rm WB}_\Lad^2\big(\ms{G}^{A_0}, \ms{G}^{A_1}\big) = \frac{1}{2} \int_0^1 \int_\Omega (\Phi \Lad_2) \dd \rd \ms{G}^{A_t} (\Phi \Lad_2)\, \rd t = 2 ((A_1 - A_0)\Lad_2^{-1})\dd \ms{G}(\Omega)(A_1 - A_0)\Lad_2^{-1}\,.
\end{equation*}
For general $A_0,A_1 \in \S_+^n$, we first see that $\mu_* := 
(\ms{G}^{A_t}, 0, 2 A_t \ms{G} (A_1 - A_0))$ as above still satisfies the continuity equation and its associated action functional $\jj_{\Lad,Q}(\mu_*)$ gives the right-hand side of \eqref{eq:expligeo} by $\ran(A_1 - A_0) \subset \ran(A_t)$, which also means $ {\rm WB}_\Lad^2(\ms{G}^{A_0}, \ms{G}^{A_1}) \le \jj_{\Lad,Q}(\mu_*)$. 
To finish the proof, it suffices to show that the equality holds. For this, we consider $A_i^\ep = A_i +  \ep I \in \S_{++}^n$ for $i = 0,1$. 
Then, by triangle inequality of ${\rm WB}_\Lad$ (see Proposition \ref{prop:metric_prop} below) and Lemma \ref{lem:com_kb_and_b}, we have 
${\rm WB}_\Lad(\ms{G}^{A^\ep_0}, \ms{G}^{A^\ep_1}) \to {\rm WB}_\Lad(\ms{G}^{A_0}, \ms{G}^{A_1})$ as $\ep \to 0$. The proof is completed by 
\begin{align*}
    {\rm WB}^2_\Lad\big(\ms{G}^{A^\ep_0}, \ms{G}^{A^\ep_1}\big) = & 2 \tr \big(\Lad_2^{-1}(A^\ep_1 - A^\ep_0)\ms{G}(\Omega)(A^\ep_1 - A^\ep_0)\Lad_2^{-1} \big)\\
& \to 2 \tr \big(\Lad_2^{-1}(A_1 - A_0)\ms{G}(\Omega)(A_1 - A_0)\Lad_2^{-1} \big) = \jj_{\Lad,Q}(\mu_*) \,, \q \ep \to 0\,. \qedhere
\end{align*}
\end{proof}

\medskip
\noindent \textbf{Primal-dual formulations.}
We proceed to study in more depth the optimality conditions by viewing $\ms{G}$ as the main variable and $\ms{(q,R)}$ as the control variable, which will be useful in Section \ref{sec:Riema_iter}. 
We first observe 
\begin{align} \label{eq:dis_dual_1}
   {\rm WB}^2_{\Lad}(\ms{G}_0,\ms{G}_1) & = \inf_{\ms{G}} \inf_{\ms{q},\ms{R}} \left\{ 
   \jj_{\Lad,Q}(\mu)      
        \,;\ \mu = (\ms{G}, \ms{q}, \ms{R}) \in \ce_\infty([0,1];\ms{G}_0,\ms{G}_1)
         \right\}\,,
\end{align}
by taking the inf in \eqref{eq:distance} over $\ms{G}$ and $(\ms{q},\ms{R})$ separately. Recall 
the formulation \eqref{rep:enerycost} of $\jj_{\Lad,Q}(\mu)$, which motivates us to 
introduce a weighted semi-inner product: 
\begin{align} \label{eq:innerweight}
    \big\l (u,W), (u',W') \big\r_{L^2_{\ms{G},\Lad}(Q)} := \big\l u \Lad_1^{\dag}, u' \Lad_1^{\dag} \big\r_{L^2_{\ms{G}}(Q)} +  \big\l W \Lad_2^{-1}, W' \Lad_2^{-1} \big\r_{L^2_{\ms{G}}(Q)} \,,
\end{align}
and the associated seminorm $\norm{\dd}_{L^2_{\ms{G},\Lad}(Q)}$ on the space of measurable functions valued in $\R^{n \t k} \t \MM^n$. The corresponding Hilbert space, denoted by $L^2_{\ms{G},\Lad}(Q,\R^{n \t k} \t \MM^n)$, is defined as the quotient space by the subspace 
$\ker\big (\norm{\dd}_{L^2_{\ms{G},\Lad}(Q)}\big)$. Hence, we can rewrite \eqref{rep:enerycost} as $\jj_{\Lad,Q}(\mu) = \norm{(G^\dag q,G^\dag R)}^2_{L^2_{\ms{G},\Lad}(Q)}/2$. 
Moreover, we define the set 
\begin{equation} \label{def:acce}
    \mathcal{AC}([0,1];\ms{G}_0,\ms{G}_1): = \{\ms{G} \in \M(Q,\S^n)\,; \ \exists \ms{(q,R)} \in \M(Q,\R^{n \t k} \t \MM^n) \ \text{s.t.}\ \ms{(G,q,R)} \in \ce_\infty([0,1];\ms{G}_0,\ms{G}_1)\}\,,
\end{equation}
and the associated energy functional: for $\ms{G} \in  \mathcal{AC}([0,1];\ms{G}_0,\ms{G}_1)$, 
\begin{align} \label{eq:sub_optprob}
    \jj^{\Lad}_{\ms{G}_0,\ms{G}_1}(\ms{G}) := \inf_{\ms{(q,R)}}  \Big\{\frac{1}{2} \norm{(G^\dag q, G^\dag R )}^2_{L^2_{\ms{G},\Lad}(Q)}\,;\   \ms{(G,q,R)} \in \ce_\infty([0,1];\ms{G}_0,\ms{G}_1)
    \Big\}\,.
\end{align}
We will see in Remark \ref{rem_ac} that $\mathcal{AC}([0,1];\ms{G}_0,\ms{G}_1)$ is closely related to the set of absolutely continuous curves in the metric space $(\M(\Omega,\S^n_+), {\rm WB}_{\Lad})$. 
With the help of these notions, \eqref{eq:dis_dual_1} can be reformulated in a compact form: 
\begin{equation} \label{def:kbalphag}
   {\rm WB}^2_{\Lad}(\ms{G}_0,\ms{G}_1) = \inf_{\ms{G}\in 
   \mathcal{AC}([0,1];\ms{G}_0,\ms{G}_1)} \jj^{\Lad}_{\ms{G}_0,\ms{G}_1}(\ms{G})\,.
\end{equation} 

Similarly to \eqref{rep:enerycost}, by Lemma \ref{lem:fcabs}, we also note that for $\ms{(G,q,R)} \in \ce_\infty([0,1];\ms{G}_0,\ms{G}_1)$, the weak formulation \eqref{eq:weak_ctneq} can be written as
\begin{equation}  \label{eq:continui_equation}
  \big\l \big(\ms{D}^* \Phi \Lad^2_1, \Phi \Lad_2^2 \big), \big(G^\dag q, G^\dag R\big)
 \big\r_{L^2_{\ms{G}, \Lad}(Q)}  = l_{\ms{G}}(\Phi)\,, \q \forall \Phi \in C^1(Q,\S^n)\,,
\end{equation}
where $l_{\ms{G}}(\dd)$ for $\ms{G} \in \mathcal{AC}([0,1];\ms{G}_0,\ms{G}_1)$ is a linear functional on $C^1(Q,\S^n)$ defined by 
\begin{equation} \label{def:lgphi}
    l_{\ms{G}}(\Phi) =  \l \ms{G}_1, \Phi_1 \r_{\Omega} -  \l \ms{G}_0, \Phi_0 \r_{\Omega} -    \l \ms{G}, \p_t\Phi\r_Q \,.
\end{equation}
Define a bijective map $\Pi: \Phi \to (\ms{D}^* \Phi \Lad_1^2, \Phi \Lad_2^2)$ for $\Phi \in C^1(Q,\S^n)$ and denote $\w{l}_{\ms{G}}: =  l_{\ms{G}} \circ \Pi^{-1}$. In view of \eqref{eq:continui_equation},
the functional $\w{l}_{\ms{G}}$ can be uniquely extended to the space 
\begin{equation} \label{def:hladd}
    H_{\ms{G},\Lad}(\ms{D}^*): = \overline{\left\{ \Pi(\Phi) \,;\ \Phi \in C^1(Q,\S^n)\right\}}^{\norm{\dd}_{L^2_{\ms{G},\Lad}(Q)}}\,,  
\end{equation}
with the norm estimate 
\begin{equation} \label{est:normlg}
    \norm{\w{l}_{\ms{G}}}_{H^*_{\ms{G},\Lad}(\ms{D}^*)} \le \norm{(G^\dag q, G^\dag R)}_{L^2_{\ms{G},\Lad}(Q)}\,.    
\end{equation}
It is worth emphasizing that such kind of extension is independent of the choice of $\ms{(q,R)}$. 

Next, we show that \eqref{eq:sub_optprob} admits a unique minimizer $\ms{(q,R)}$ that also satisfies the equality in 
\eqref{est:normlg}. Note that $(u,W)$ and $(u \P_{\Lad_1}, W \P_{\Lad_2})$ are equivalent in $L^2_{\ms{G},\Lad}(Q,\R^{n \t k} \t \MM^n)$, where $\P_{\Lad_i}$ is the orthogonal projection to $\ran(\Lad_i)$. Hence, for any  $(u,W) \in L^2_{\ms{G},\Lad}(Q,\R^{n \t k} \t \MM^n)$, we can assume
$\ran(u^{\rm T}) \subset \ran (\Lad_1)$ and $\ran(W^{\rm T}) \subset \ran (\Lad_2)$.  Then, it holds that
any $L^2_{\ms{G},\Lad}$-field $(u,W)$ satisfying
 $ \l (\ms{D}^* \Phi \Lad_1^2, \Phi \Lad_2^2), (u,W)\r_{L^2_{\ms{G},\Lad}(Q)}  = l_{\ms{G}}(\Phi)$, $\forall \Phi \in C^1(Q,\S^n)$, induces a measure  $\ms{(q,R)} := (\ms{G} u,\ms{G}W)$ such that $\ms{(G,q,R)} \in \ce_\infty([0,1];\ms{G}_0,\ms{G}_1)$. 
This observation implies that $\jj^{\Lad}_{\ms{G}_0,\ms{G}_1}(\ms{G})$ is actually a uniquely solvable minimum norm problem with an affine constraint:
\begin{align} \label{eq:sub_opt_2}
    \jj^{\Lad}_{\ms{G}_0,\ms{G}_1}(\ms{G}) = \inf  \Big\{\frac{1}{2} \norm{(u,W)}^2_{L^2_{\ms{G},\Lad}(Q)}\,;\  (u,W) \in L^2_{\ms{G},\Lad}(Q,\R^{n \t k} \t \MM^n)\ \text{such that} & \notag  \\
    \big\l (\ms{D}^* \Phi \Lad_1^2, \Phi \Lad_2^2), (u,W) \big\r_{L^2_{\ms{G},\Lad}(Q)} 
    = l_{\ms{G}}(\Phi)\,, \ \forall \Phi \in C^1(Q,\S^n)
   & \Big\}\,.
\end{align}
The unique minimizer $(u_*,W_*)$ to \eqref{eq:sub_opt_2}
 is given by the orthogonal projection of $0$ on the constraint set, equivalently, the Riesz representation of the functional $\w{l}_{\ms{G}}$ on the space $H_{\ms{G},\Lad}(\ms{D}^*)$. It then follows that $(\ms{q}_*,\ms{R}_*) := (\ms{G} u_*,\ms{G}W_*)$ is the desired minimizer to \eqref{eq:sub_optprob} and there holds 
\begin{align} \label{eq:normlg}
    \norm{\w{l}_{\ms{G}}}_{H^*_{\ms{G},\Lad}(\ms{D}^*)} 
    & =   \norm{(u_*,W_*)}_{L^2_{\ms{G},\Lad}(Q)} = \norm{(G^\dag q_*, G^\dag R_*)}_{L^2_{\ms{G},\Lad}(Q)}\,.
\end{align}

We summarize the above facts in the following useful result. 

\begin{theorem} \label{thm:dual_minmax}
${\rm WB}^2_{\Lad}(\ms{G}_0,\ms{G}_1)$ has the following  representation: 
\begin{equation*}
    {\rm WB}^2_{\Lad}(\ms{G}_0,\ms{G}_1)  =  \inf_{\ms{G}\in \mathcal{AC}([0,1];\ms{G}_0,\ms{G}_1)} \jj^{\Lad}_{\ms{G}_0,\ms{G}_1}(\ms{G})\q \text{with}\q \jj^{\Lad}_{\ms{G}_0,\ms{G}_1}(\ms{G}) =  \frac{1}{2}\norm{(u_*, W_*)}^2_{L^2_{\ms{G},\Lad}(Q)}\,,
\end{equation*}
where 
$(u_*, W_*)$ is the Riesz representation of $\w{l}_{\ms{G}}$ in $H_{\ms{G},\Lad}(\ms{D}^*)$ that uniquely solves the minimum norm problem \eqref{eq:sub_opt_2}. 
Moreover, $\jj^{\Lad}_{\ms{G}_0,\ms{G}_1}(\ms{G})$ admits the following dual formulation: 
\begin{equation} \label{opt_3}
    \jj^{\Lad}_{\ms{G}_0,\ms{G}_1}(\ms{G}) = \sup \Big  \{l_{\ms{G}}(\Phi) - \frac{1}{2}\norm{(\ms{D}^* \Phi \Lad_1^2,\Phi \Lad_2^2)}^2_{L^2_{\ms{G},\Lad}(Q)}\,; \ \Phi \in C^1(Q,\S^n) \Big\}\,.
\end{equation} 
\end{theorem}
    
\begin{proof} 
    It suffices to derive the dual formulation \eqref{opt_3} of
$\jj^{\Lad}_{\ms{G}_0,\ms{G}_1}$.  For this, we first note  
\begin{equation*}
     \frac{1}{2}\norm{(u,W)}_{L^2_{\ms{G},\Lad}(Q)}^2 = \sup_{(u',W') \in L^2_{\ms{G},\Lad}(Q,\R^{n \t k} \t \MM^n)} \l (u,W), (u',W')\r_{L^2_{\ms{G},\Lad}(Q)} -  \frac{1}{2}\norm{(u',W')}^2_{L^2_{\ms{G},\Lad}(Q)} \,, 
\end{equation*}
which further implies, by  $(u_*, W_*) \in  H_{\ms{G},\Lad}(\ms{D}^*) \subset L^2_{\ms{G},\Lad}(Q,\R^{n \t k} \t \MM^n)$, for any $\Phi \in C^1(Q,\S^n)$,
\begin{align} \label{eq_dualaux}
    \jj^{\Lad}_{\ms{G}_0,\ms{G}_1}(\ms{G}) = \frac{1}{2}\norm{(u_*, W_*)}_{L^2_{\ms{G},\Lad}(Q)}^2 &\ge \l (u_*, W_*), \big(\ms{D}^* \Phi \Lad^2_1, \Phi \Lad_2^2 \big)\r_{L^2_{\ms{G},\Lad}(Q)}
    - \frac{1}{2}\norm{( \ms{D}^* \Phi \Lad_1^2,\Phi \Lad_2^2)}^2_{L^2_{\ms{G},\Lad}(Q)} \\
    & = l_G(\Phi) - \frac{1}{2}\norm{( \ms{D}^* \Phi \Lad_1^2,\Phi \Lad_2^2)}^2_{L^2_{\ms{G},\Lad}(Q)}\,. \notag
\end{align}
Then, recalling \eqref{def:hladd} and choosing a sequence $\{(\ms{D}^* \Phi_n \Lad_1^2, \Phi_n \Lad_2^2)\}$ with $\Phi_n \in C^1(Q,\S^n)$ in \eqref{eq_dualaux} that approximates $(u_*, W_*)$ gives the desired
\eqref{opt_3}.
\end{proof}

 \medskip
\noindent \textbf{Varying weighted matrices.} 
We regard ${\rm WB}_{\Lad}$ as a family of distances indexed by $\Lad$, and investigate the behaviors of ${\rm WB}_{\Lad}$ and its minimizer when $\Lad$ varies, in particular, when $|\Lad_1|$ or $|\Lad_2|$ tends to zero or infinity. We give a partial answer to this question in the following proposition. 
For ease of exposition, we introduce  
\begin{align} \label{def:sepata_cost}
   \jj^q_{\Lad_1}(\mu) = \jj_{\Lad,Q}(\ms{(G,q,0)}) 
    \,,  \q \jj^R_{\Lad_2}(\mu) = \jj_{\Lad,Q}(\ms{(G,0,R)}) \q \text{for}\  \mu \in \M(Q,\xx)\,.
\end{align}

\begin{proposition} \label{prop:limitweight}
Let $\ms{G}_0,\ms{G}_1 \in \M(\Omega,\S^n_+)$ and  
$\mu_{*,\Lad}$ denote the minimizer to ${\rm WB}^2_\Lad(\ms{G}_0,\ms{G}_1)$ \eqref{eq:distance}.
It holds that ${\rm WB}^2_{(\Lad_1,\Lad_2)}(\ms{G}_0,\ms{G}_1) \to {\rm WB}^2_{(0,\Lad_2)}(\ms{G}_0,\ms{G}_1)$ as $\norm{\Lad_1}_\ff \to 0$, and for any sequence $\{ \Lad_{1,j}\}_{j \in \NN} \subset \S^k_+$ with $\norm{\Lad_{1,j}}_\ff \to 0$, the associated minimizer $\mu_{*,(\Lad_{1,j},\Lad_2)}$, up to a subsequence, weak* converges to a minimizer $\mu_*$ to ${\rm WB}^2_{(0,\Lad_2)}(\ms{G}_0,\ms{G}_1)$. 
\end{proposition}

\begin{proof} 
We first claim that $\norm{\Lad_1}_\ff^2\jj^q_{\Lad_1}(\mu_{*,\Lad})$ and $\jj^R_{\Lad_2}(\mu_{*,\Lad})$ are bounded when $\norm{\Lad_1}_\ff \to 0$, which, by estimates \eqref{priest_qR} and \eqref{priest_G}, implies that $\mu_{*,\Lad}$ is bounded in $\M(Q,\xx)$. For this, we consider the set 
\begin{align} \label{def:ceq}
\ce_{\Lad_1, q} : =  \arg\min\{\jj^q_{\Lad_1}(\mu)\,; \ \mu \in \ce([0,1];\ms{G}_0,\ms{G}_1)\}\,.
\end{align}
Similarly to the proof of Lemma \ref{lem:com_kb_and_b},  we have that $\ce_{\Lad_1, q}$ is nonempty and contains at least one element with $\ms{q} = 0$, and $\min\{\jj^q_{\Lad_1}(\mu)\,; \ \mu \in \ce([0,1];\ms{G}_0,\ms{G}_1)\} = 0$. Since $\mu_{*,\Lad}$ minimizes $\jj_{\Lad,Q}(\dd)$, it follows that
\begin{equation} \label{est_aux_1}
    \jj_{\Lad,Q}(\mu_{*,\Lad}) = \jj^q_{\Lad_1}(\mu_{*,\Lad}) +  \jj^R_{\Lad_2}(\mu_{*,\Lad})\le \jj_{\Lad,Q}(\mu) =  \jj^R_{\Lad_2}(\mu)\,, \q \forall \mu = \ms{(G,0,R)}\in \ce_{\Lad_1, q}\,.
\end{equation}
Noting $\{\ms{(G,0,R)}\in \ce_{\Lad_1, q}\} = \{\ms{(G,0,R)} \in \ce([0,1];\ms{G}_0,\ms{G}_1)\}$, \eqref{est_aux_1} yields that $\jj^R_{\Lad_2}(\mu_{*,\Lad})$ is bounded by a constant independent of $\Lad_1$. Moreover, multiplying $\norm{\Lad_1}_\ff^2$ on both sides of \eqref{est_aux_1} and then letting $\norm{\Lad_1}_\ff \to 0$ , we obtain
\begin{equation}  \label{est_aux_2}
    \lim_{\norm{\Lad_1}_\ff \to 0} \norm{\Lad_1}_\ff^2\,\jj^q_{\Lad_1}(\mu_{*,\Lad})  = 0\,.
\end{equation}
Then the boundedness of $\norm{\Lad_1}_\ff^2\jj^q_{\Lad_1}(\mu_{*,\Lad})$ for small enough $\norm{\Lad_1}_\ff$ follows. We complete the proof of the claim. 

By the boundedness of $\norm{\mu_{*,\Lad}}_{\rm TV}$ as $\norm{\Lad_1}_\ff \to 0$, 
we are allowed to take a subsequence $\{\Lad_{1,j}\}_{j \in \NN} $ in $\S_+^n$ such that  the minimizer $\mu_{*,\w{\Lad}_j}$ with $\w{\Lad}_j = (\Lad_{1,j},\Lad_2)$ weak* converges to a measure $\mu_* \in \M(Q,\xx)$ when $n \to \infty$, which clearly satisfies $\mu_* \in \ce([0,1]; \ms{G}_0,\ms{G}_1)$. Then, by the weak* lower semicontinuity of $\jj^R_{\Lad_2}$ and \eqref{est_aux_1}, we have   
\begin{equation}  \label{est_aux_3}
  \jj^R_{\Lad_2}(\mu_*) \le \liminf_{j\to \infty} \jj^R_{\Lad_2}(\mu_{*,\w{\Lad}_j}) \le \limsup_{j\to \infty} {\rm WB}^2_{\w{\Lad}_j}(\ms{G}_0,\ms{G}_1) \le \inf \{\jj^R_{\Lad_2}(\mu)\,;\ \mu = \ms{(G,0,R)}\in \ce_{\Lad_1, q}\}\,.
\end{equation}
The right-hand side of \eqref{est_aux_3} is recognized as ${\rm WB}_{(0,\Lad_2)}(\ms{G}_0,\ms{G}_1)$ and the inf is attained; see Remark \ref{rem:bures_2} and Theorem \ref{thm:strong_dual}.
Also, by \eqref{priest_qR} and \eqref{est_aux_2}, it holds that the limit measure $\mu_* \in \ce([0,1];\ms{G}_0,\ms{G}_1)$ is of the form $\mu_* 
 = \ms{(G_*,0,R_*)}$. The proof is completed by \eqref{est_aux_3}.  
\end{proof}

Proposition \ref{prop:limitweight} above tells us that 
the measure $\ms{q}$ is forced to be nearly zero, if 
the transportation part is given too much weight (i.e., $\norm{\Lad_1}_\ff$ is small, cf.\,\eqref{rep:enerycost}), equivalently, if the problem is on a large scale (cf.\,Remark \ref{rem;space}). It is also possible and interesting to consider other limiting regimes, e.g., $\norm{\Lad_1}_\ff \to \infty$, $\norm{\Lad_2}_\ff \to 0$, or only let part of eigenvalues of $\Lad_i$ vanish, which, however, is beyond the scope of this work.

\section{Geometric properties and Riemannian interpretation} \label{sec:Riema_iter}

In this section, we shall study the space $\M(\Omega,\S^n_+)$ equipped with the distance ${\rm WB}_{\Lad}(\dd,\dd)$ from the metric point of view. In particular, we will prove that $(\M(\Omega,\S^n_+),{\rm WB}_{\Lad})$ is a complete geodesic space with a Riemannian interpretation. 
We first show that  
${\rm WB}_{\Lad}(\dd,\dd)$ is indeed a metric on $\M(\Omega,\S^n_+)$, which is a simple corollary of the following characterization of ${\rm WB}_{\Lad}(\dd,\dd)$ by standard reparameterization techniques 
(cf.\,\cite[Lemma 1.1.4]{ambrosio2008gradient} or \cite[Theorem 5.4]{dolbeault2009new}). We denote by $\w{\ce}([a,b];\ms{G}_0,\ms{G}_1)$ the set of measures $\mu \in \ce([a,b];\ms{G}_0,\ms{G}_1)$ that can be disintegrated as $\mu = \int_a^b \d_t \otimes \mu_t\, \rd t$. It is clear that $\ce_\infty \subset \w{\ce} \subset \ce$. 

\begin{lemma}  \label{thm:geodesic_space}
For $\ms{G}_0,\ms{G}_1 \in \M(\Omega,\S_+^n)$ and $b > a > 0$, there holds   
    \begin{align} \label{eq:length_energy}
    {\rm WB}_{\Lad}(\ms{G}_0,\ms{G}_1) = \inf_{\mu \in \w{\ce}([a,b];\ms{G}_0,\ms{G}_1)} \int_a^b  \jj_{\Lad,\Omega}(\mu_t)^{1/2} \,  \rd t \,.
    \end{align}
    Moreover, the minimizer to the problem \eqref{eq:change_time} gives  a constant-speed minimizer $\mu$ to \eqref{eq:length_energy}, which satisfies
    \begin{align} \label{eq:cons_energy}
        (b - a) J_{\Lad,\Omega}(\mu_t)^{1/2} ={\rm WB}_{\Lad}(\ms{G}_0,\ms{G}_1) \q \text{for}\ a.e.\, t \in [a,b]\,.
    \end{align}   
\end{lemma} 

The proof is provided in Appendix \ref{app_B} for completeness.
The above lemma is an analog of 
a well-known geometric fact that minimizing the energy of a parametric curve is the same as minimizing its length with constant-speed constraint \cite{flaherty2013riemannian}. The following result summarizes some fundamental properties of $(\M(\Omega,\S^n_+), {\rm WB}_{\Lad})$. 


\begin{proposition} \label{prop:metric_prop}
    $(\M(\Omega,\S^n_+), {\rm WB}_{\Lad})$ is a complete metric space. Moreover, the topology induced by the metric ${\rm WB}_{\Lad}$ is stronger than the weak* one, i.e., $\lim_{n \to \infty}{\rm WB}_{\Lad}(\ms{G}^n,\ms{G}) = 0$ implies the weak* convergence of $\ms{G}^n$ to $\ms{G}$.
\end{proposition}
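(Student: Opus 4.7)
The plan is to verify the metric axioms, prove the topological strengthening, and then derive completeness from lower semicontinuity together with a tightness argument based on \eqref{priest_G}. The main obstacle will be securing a uniform total-mass bound for a Cauchy sequence; once this is in place, the rest is standard.

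For the metric axioms, non-negativity is immediate, and symmetry follows from the time reversal $\tilde\mu_t := (\ms{G}_{1-t}, -\ms{q}_{1-t}, -\ms{R}_{1-t})$ which sends $\ce([0,1]; \ms{G}_0, \ms{G}_1)$ bijectively to $\ce([0,1]; \ms{G}_1, \ms{G}_0)$ while preserving $\jj_{\Lad, Q}$ (since $J_\Lad$ is even in $(q, R)$). The triangle inequality uses the length formulation \eqref{eq:length_energy}: given near-optimal $\mu^{01} \in \ce([0,1]; \ms{G}_0, \ms{G}_1)$ and, via Lemma \ref{lemma:timescaling}, $\mu^{12} \in \ce([1,2]; \ms{G}_1, \ms{G}_2)$, their concatenation lies in $\ce([0,2]; \ms{G}_0, \ms{G}_2)$ because the interior $\d_1$-terms in the two continuity equations cancel, and the lengths on $[0,1]$ and $[1,2]$ add up to bound ${\rm WB}_\Lad(\ms{G}_0, \ms{G}_2)$. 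The vanishing distance direction ${\rm WB}_\Lad = 0 \Rightarrow \ms{G}_0 = \ms{G}_1$ proceeds by extracting a minimizer $\mu = (\ms{G}, \ms{q}, \ms{R})$ from Theorem \ref{thm:strong_dual}; representation \eqref{rep:enerycost} then forces $G^\dag q \Lad_1^\dag = 0$ and $G^\dag R \Lad_2^{-1} = 0$ $\lad$-a.e., and combining this with $\ran[q, R] \subset \ran G = (\ker G)^\perp$ and $\ran q^{\rm T} \subset \ran \Lad_1$ from Lemma \ref{lem:fcabs} yields $\ms{q} = \ms{R} = 0$. The continuity equation reduces to $\p_t \ms{G} = \d_0 \otimes \ms{G}_0 - \d_1 \otimes \ms{G}_1$, so the weak-star continuous representative from Proposition \ref{prop:disinte_path} is constant on $[0,1]$, giving $\ms{G}_0 = \ms{G}_1$.

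For the topological comparison, assume ${\rm WB}_\Lad(\ms{G}^n, \ms{G}) \to 0$ and pick curves $\mu^n \in \ce([0,1]; \ms{G}^n, \ms{G})$ with $\jj_{\Lad, Q}(\mu^n) \to 0$. The estimate \eqref{priest_G} bounds $\tr \ms{G}^n_t(\Omega)$ uniformly in $n$ and $t$, and then Lemma \ref{lem:priorimu} gives $|\ms{q}^n|(Q) + |\ms{R}^n|(Q) \le C \, \jj_{\Lad, Q}(\mu^n)^{1/2} \to 0$. Taking the time-independent test function $\Phi(t, x) = \Psi(x)$ with $\Psi \in C^1(\Omega, \S^n)$ in \eqref{eq:time_a_b} over $[0,1]$ yields
\begin{align*}
\l \ms{G} - \ms{G}^n, \Psi \r_\Omega = \int_Q \ms{D}^* \Psi \dd \rd \ms{q}^n + \Psi \dd \rd \ms{R}^n \,,
\end{align*}
which is of order $\jj_{\Lad, Q}(\mu^n)^{1/2} \to 0$ for each fixed $\Psi$; density of $C^1(\Omega, \S^n)$ in $C(\Omega, \S^n)$ together with the uniform bound on $\tr \ms{G}^n(\Omega)$ upgrades this to weak-star convergence $\ms{G}^n \rightharpoonup^* \ms{G}$.

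For completeness, let $\{\ms{G}^n\}$ be Cauchy in ${\rm WB}_\Lad$. Fix $N$ with ${\rm WB}_\Lad(\ms{G}^n, \ms{G}^N) \le 1$ for all $n \ge N$; applying \eqref{priest_G} to near-optimal curves joining $\ms{G}^N$ to $\ms{G}^n$ shows $\sup_n \tr \ms{G}^n(\Omega) < \infty$. By Banach-Alaoglu we extract a subsequence $\ms{G}^{n_k} \rightharpoonup^* \ms{G}$, and $\ms{G} \in \M(\Omega, \S^n_+)$ since the cone is weak-star closed. The weak-star lower semicontinuity from Corollary \ref{lem:lsckbalpa} then gives
\begin{align*}
{\rm WB}_\Lad(\ms{G}^n, \ms{G}) \le \liminf_{k \to \infty} {\rm WB}_\Lad(\ms{G}^n, \ms{G}^{n_k}) \,,
\end{align*}
whose right-hand side is arbitrarily small for large $n$ by the Cauchy property. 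Hence $\ms{G}^n \to \ms{G}$ in ${\rm WB}_\Lad$ and the space is complete.
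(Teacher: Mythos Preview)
Your proof is correct and follows the same overall strategy as the paper: metric axioms via the length formulation \eqref{eq:length_energy}, completeness via weak--star compactness plus the lower semicontinuity in Corollary~\ref{lem:lsckbalpa}. One minor slip: in the topological-comparison step you take $\mu^n \in \ce([0,1];\ms{G}^n,\ms{G})$, so \eqref{priest_G} as stated bounds $\tr\ms{G}^n_t(\Omega)$ in terms of the \emph{initial} mass $\tr\ms{G}^n(\Omega)$, which is not yet controlled; simply reverse the curves (or invoke the time-symmetric form of \eqref{priest_G}) so the fixed endpoint $\ms{G}$ sits at $t=0$.

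The one place where your argument genuinely differs from the paper is the proof that ${\rm WB}_\Lad$-convergence implies weak--star convergence. The paper argues indirectly: ${\rm WB}_\Lad$-boundedness is equivalent to TV-boundedness (Lemma~\ref{lem:formetric}), so every subsequence has a weak--star convergent subsubsequence, and lower semicontinuity pins the limit down as $\ms{G}$. You instead give a direct quantitative argument, reading off $\l\ms{G}-\ms{G}^n,\Psi\r_\Omega$ from the continuity equation with a time-independent test function and using Lemma~\ref{lem:priorimu} to show the right-hand side is $O(\jj_{\Lad,Q}(\mu^n)^{1/2})$. Your route is more elementary and yields an explicit rate; the paper's route is shorter once Lemma~\ref{lem:formetric} is available and reuses the completeness machinery.
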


The proof needs the following lemma from Lemma \ref{lem:com_kb_and_b} and a priori estimates \eqref{priest_qR} and \eqref{priest_G}. 

\begin{lemma} \label{lem:formetric}
A subset of $\M(\Omega,\S^n_+)$ is bounded with respect to 
the distance ${\rm WB}_\Lad$ if and only if it is bounded 
with respect to the total variation norm. Hence, a bounded set in $(\M(\Omega,\S^n_+), {\rm WB}_\Lad)$ is weak* relatively compact.   
\end{lemma}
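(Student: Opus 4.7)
The plan is to prove the equivalence by establishing two one-sided inequalities that relate ${\rm KB}_\Lad$ distance to the total mass (equivalently, total variation via \eqref{eq:trtotal}), and then invoke Banach--Alaoglu for the last assertion. Throughout I will freely use that $|\ms{G}| \sim \tr \ms{G}$ for $\ms{G} \in \M(\Omega,\S^n_+)$, so it suffices to control $\tr \ms{G}(\Omega)$.

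First I would handle the easy direction (bounded total variation $\Rightarrow$ bounded ${\rm KB}_\Lad$). Apply Lemma \ref{lem:com_kb_and_b} with $\ms{G}_0 = 0$ and any $\ms{G} \in \mathcal{S}$, choosing $\lad$ so that $|\ms{G}| \ll \lad$. Since $G_{0,\lad} = 0$ and $|\sqrt{G_\lad}|^2 = \tr(G_\lad)$, this reduces to
\begin{equation*}
    {\rm KB}^2_\Lad(0,\ms{G}) \le 2 |\Lad_2^{-1}|^2 \int_\Omega \tr(G_\lad) \rd \lad = 2 |\Lad_2^{-1}|^2 \tr \ms{G}(\Omega)\,,
\end{equation*}
which is uniformly bounded on $\mathcal{S}$, so $\mathcal{S}$ lies in a ${\rm KB}_\Lad$-ball around $0$.

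For the converse (bounded ${\rm KB}_\Lad$ $\Rightarrow$ bounded total variation), I would fix a reference $\ms{G}_* \in \mathcal{S}$ and let $M := \sup_{\ms{G} \in \mathcal{S}} {\rm KB}_\Lad(\ms{G}_*,\ms{G}) < +\infty$. For an arbitrary $\ms{G} \in \mathcal{S}$ and any $\ep > 0$, pick a near-optimal curve $\mu^\ep = \ms{(G^\ep,q^\ep,R^\ep)} \in \ce_\infty([0,1];\ms{G}_*,\ms{G})$ with $\jj_{\Lad,Q}(\mu^\ep) \le (M + \ep)^2$. Using the representation \eqref{rep:enerycost} I get in particular
\begin{equation*}
    \tfrac{1}{2}\norm{G^{\ep,\dag} R^\ep \Lad_2^{-1}}^2_{L^2_{\ms{G}^\ep}(Q)} \le \jj_{\Lad,Q}(\mu^\ep) \le (M+\ep)^2\,,
\end{equation*}
and then feed this into the a priori bound \eqref{priest_G} from Proposition \ref{prop:disinte_path} at the endpoint $t = 1$. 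Since the weak--star continuous representative $\w{\ms{G}}^\ep_t$ coincides with $\ms{G}$ at $t = 1$ by the temporal boundary condition built into Definition \ref{def:abs_conti_eq} (see \eqref{eq:time_a_b}), this yields
\begin{equation*}
    \tr \ms{G}(\Omega) \le C \bigl(\tr \ms{G}_*(\Omega) + 2(M+\ep)^2 |\Lad_2|^2 \bigr)\,,
\end{equation*}
and sending $\ep \to 0$ gives a bound uniform in $\ms{G} \in \mathcal{S}$.

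The last statement is then immediate: a ${\rm KB}_\Lad$-bounded set is total-variation bounded by the above, hence contained in a closed ball of $\M(\Omega,\MM^n) = C(\Omega,\MM^n)^*$, which is weak--star compact by the Banach--Alaoglu theorem (and separability of $C(\Omega,\MM^n)$ gives sequential compactness); the weak--star closedness of $\M(\Omega,\S^n_+)$ recalled in Section \ref{sec:notation} keeps limit points in the cone. I do not anticipate any serious obstacle here — everything is already in place; the only point that requires a little care is making sure that \eqref{priest_G} is applied to the genuine endpoint $\ms{G}$ rather than just to an a.e.\ representative, which is guaranteed by the continuity statement in Proposition \ref{prop:disinte_path}(ii).
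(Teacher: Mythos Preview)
Your proof is correct and follows exactly the route the paper indicates: Lemma~\ref{lem:com_kb_and_b} (applied with $\ms{G}_0=0$) for one direction, and the a~priori estimate~\eqref{priest_G} from Proposition~\ref{prop:disinte_path} along a near-optimal curve for the other. The care you take with the endpoint via Proposition~\ref{prop:disinte_path}(ii) is appropriate, and the final weak--star compactness claim is standard.
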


\begin{proof}[Proof of Proposition \ref{prop:metric_prop}]
First, note that ${\rm WB}_{\Lad}$ is a function from $\M(\Omega,\S^n_+) \t \M(\Omega,\S^n_+)$ to $[0,+\infty)$. It is also easy to check the symmetry ${\rm WB}_{\Lad}(\ms{G}_0,\ms{G}_1) = {\rm WB}_{\Lad}(\ms{G}_1,\ms{G}_0)$ by Lemma \ref{lemma:timescaling} and the triangle inequality by \eqref{eq:length_energy}. To show that ${\rm WB}_{\Lad}$ is a metric, it suffices to prove that ${\rm WB}_{\Lad}(\ms{G}_0,\ms{G}_1) = 0$ implies $\ms{G}_0 = \ms{G}_1$. For this, suppose that $\mu = \ms{(G,q,R)}$ is a minimizer to \eqref{eq:distance} with $\jj_{\Lad,Q}(\mu) = 0$. Recalling the formula \eqref{rep:enerycost}, we have $\ms{(q,R)} = 0$. Then, taking test functions $\Phi(t,x) = \Psi(x)$ with $\Psi(x) \in C^1(\Omega,\S^n)$ in \eqref{eq:weak_ctneq}, we find $\l \ms{G}_1 - \ms{G}_0, \Psi\r_{\Omega} = 0$,  $\forall \Psi \in C^1(\Omega,\S^n)$, which implies $\ms{G}_0 = \ms{G}_1$.  
Next, we show that the metric space $(\M(\Omega,\S^n_+), {\rm WB}_{\Lad})$ is complete. Let $\{\ms{G}^n\}_{n \in \NN}$ be a Cauchy sequence in $(\M(\Omega,\S^n_+), {\rm WB}_{\Lad})$, and hence also bounded in ${\rm WB}_{\Lad}$. By Lemma \ref{lem:formetric}, we have that $\ms{G}^n$, up to a subsequence, weak* converges to a measure $\ms{G} \in \M(\Omega,\S_+^n)$. Then, by Corollary \ref{lem:lsckbalpa} and the fact that $\{\ms{G}^n\}$ is a Cauchy sequence, for small $\ep > 0$ and large enough $m$, there holds 
\begin{align*}
  \ep \ge  \liminf_{n \to 0}{\rm WB}_{\Lad}(\ms{G}^n,\ms{G}^m) \ge {\rm WB}_{\Lad}(\ms{G},\ms{G}^m)\,,
\end{align*}
which immediately gives ${\rm WB}_{\Lad}(\ms{G},\ms{G}^m) \to 0$ as $m  \to \infty$. To finish, we show that $\ms{G}^n$ weak* converges to $\ms{G}$ if $\ms{G}^n$ converges to $\ms{G}$ in $(\M(\Omega,\S^n_+), {\rm WB}_{\Lad})$. To do so, it suffices to note that by a similar argument as above,  every subsequence of $\ms{G}^n$ has a weak* convergent sub-subsequence to $\ms{G}$, which readily gives the weak* convergence of $\ms{G}^n$ to $\ms{G}$. 
\end{proof}

The main aim of this section is to show that  $(\M(\Omega,\S^n),{\rm WB}_{\Lad})$ is a geodesic space and then equip it with some differential structure that is consistent with the metric structure, in the spirit of \cite{dolbeault2009new,ambrosio2008gradient}.

For the reader's convenience, we recall some basic concepts for the analysis in metric spaces \cite{ambrosio2004topics}.  Let $(X,d)$ be a metric space 
and $\{\ww_t\}_{t \in [a, b]}$ be a curve in  $(X,d)$ (i.e., a continuous map from $[a,b]$ to $X$).  We say that it is absolutely continuous if there exists a $L^1$-function $g$ such that $d(\ww_t,\ww_s) \le \int_s^t g(r) \,\rd r$ for any  $a \le s \le t \le b$. 
 Moreover, the curve is said to have finite $p$-energy if $g \in L^p([a,b],\R)$.
The metric derivative $|\ww_t'|$ of $\{\ww_t\}_{t \in [a, b]}$ at the time point $t$ is defined by $|\ww'_t| : = \lim_{\d \to 0}|\d|^{-1} d(\ww_{t + \d},\ww_t)$, if the limit exists.
It can be shown \cite[Theorem 1.1.2]{ambrosio2008gradient} that for an absolutely continuous curve $\ww_t$, the metric derivative $|\ww'_t|$ is well-defined for a.e.\,$t \in [a,b]$ and satisfies $|\ww'_t| \le g(t)$.

The length ${\rm L}(\ww_t)$ of an absolutely continuous curve $\{\ww_t\}_{t \in [a,b]}$ is defined as ${\rm  L}(\ww_t) = \int_a^b |\ww'_t| \,\rd t$, which is invariant 
with respect to the reparameterization. Then, $(X,d)$ is a geodesic space if for any $x,y \in X$, there holds 
\begin{align} \label{eq3}
    d(x,y) = \min \{{\rm L}(\ww_t)\,; \  \{\ww_t\}_{t \in [0,1]}\ \text{is absolutely continuous with}\ \ww(0) = x\,, \ww(1) = y \}\,,
\end{align}
where the minimizer exists and is called the (minimizing) geodesic between $x$ and $y$. Recall
\cite[Lemma 1.1.4]{ambrosio2008gradient} that any absolutely continuous curve can be reparameterized as a Lipschitz one with constant metric derivative $|\ww'_t| = {\rm L}(\ww_t)$ a.e.. Hence, we can always assume that the geodesic is constant-speed (i.e., $|\ww_t'|$ is constant a.e.). Then, it is clear from definition \eqref{eq3} that a curve $\{\ww_t\}_{t \in [0,1]}$ is a constant-speed geodesic if and only if it satisfies $d(\ww_s,\ww_t) = |t -s |d (\ww_0,\ww_1)$ for any $0 < s < t < 1$. 

From the above concepts, we see that for our purpose, a key step is to characterize the absolutely continuous curves in the metric space $(\M(\Omega, \S^n_+),{\rm WB}_{\Lad})$, which is given by the following theorem extended from \cite[Theorem 5.17]{dolbeault2009new}.

\begin{theorem} \label{prop:for_geodesic}
A curve $\{\ms{G}_t\}_{t \in [a,b]}$, $b > a > 0$, is absolutely continuous with respect to 
the metric ${\rm WB}_{\Lad}$ if and only if  there exists $(\ms{q},\ms{R}) \in \M(Q,\R^{n \t k} \t \MM^n)$ such that 
 $\mu = (\ms{G},\ms{q},\ms{R}) \in 
 \w{\ce}([a,b];\ms{G}_0,\ms{G}_1)$  and 
\begin{equation} \label{eq:finite_length}
    \int_a^b \jj_{\Lad,\Omega}(\mu_t)^{1/2}\, \rd t < +  \infty \,.   
\end{equation}
In this case, we have the metric derivative $|\ms{G}_t'|$ satisfying 
\begin{equation} \label{eq:meder}
    |\ms{G}_t'| \le \jj_{\Lad,\Omega}(\mu_t)^{1/2}\q \text{for}\ a.e.\,t\in[a,b]\,,
\end{equation}
and there exists unique $(\ms{q_*}, \ms{R_*})$ such that the equality in \eqref{eq:meder}
holds a.e., where the uniqueness is in the sense of equivalence class: $\ms{(q,R)} \sim \ms{(q',R')}$ if and only if $\jj_{\Lad,Q_a^b}(\ms{(G,q-q',R-R')}) = 0 $. If $\ms{G}_t$ has finite $2$-energy, then $(\ms{q_*}, \ms{R_*}) = 
(\ms{G}u_*, \ms{G}W_*)$ with the $L^2_{\ms{G},\Lad}$-field $(u_*,W_*)$ given in 
Theorem \ref{thm:dual_minmax}.  
\end{theorem}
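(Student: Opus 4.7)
I will prove the two directions of the equivalence separately, establish the equality in \eqref{eq:meder}, and then derive the uniqueness and explicit characterization of the optimal field.

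\textbf{The ``if'' direction.} Suppose $\mu = \ms{(G,q,R)} \in \ce([a,b];\ms{G}_a,\ms{G}_b)$ satisfies \eqref{eq:finite_length}. For any $a \le s < t \le b$, the restriction $\mu|_{Q_s^t}$ lies in $\ce([s,t];\ms{G}_s,\ms{G}_t)$ thanks to the weak--star continuous representative of $\ms{G}_t$ provided by Proposition \ref{prop:disinte_path}(ii). Applying the length--energy equivalence of Proposition \ref{thm:geodesic_space} on $[s,t]$ yields
\begin{equation*}
{\rm WB}_{\Lad}(\ms{G}_s,\ms{G}_t) \le \int_s^t \jj_{\Lad,\Omega}(\mu_r)^{1/2}\,\rd r\,,
\end{equation*}
so $\ms{G}_t$ is absolutely continuous with integrable majorant $g(r) := \jj_{\Lad,\Omega}(\mu_r)^{1/2}$, and the standard fact $|\ms{G}_t'| \le g(t)$ a.e.\ yields \eqref{eq:meder}.

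\textbf{The ``only if'' direction (finite $2$-energy case).} Assume first that $g(t) := |\ms{G}_t'| \in L^2([a,b])$. For each $n \in \NN$ set $t_j^n := a + j(b-a)/n$, and by Theorem \ref{thm:strong_dual} pick a minimizer $\mu^{n,j} \in \ce([t_{j-1}^n, t_j^n]; \ms{G}_{t_{j-1}^n}, \ms{G}_{t_j^n})$ of the problem \eqref{eq:change_time} on that subinterval. Concatenating yields $\mu^n \in \ce([a,b]; \ms{G}_a, \ms{G}_b)$ with
\begin{equation*}
\jj_{\Lad,Q_a^b}(\mu^n) = \sum_{j=1}^n \frac{n}{b-a}\,{\rm WB}_{\Lad}^2(\ms{G}_{t_{j-1}^n}, \ms{G}_{t_j^n}) \le \sum_{j=1}^n \frac{n}{b-a}\Big(\int_{t_{j-1}^n}^{t_j^n} g(r)\,\rd r\Big)^2 \le \int_a^b g(r)^2\,\rd r\,,
\end{equation*}
by \eqref{eq:change_time}, the definition of absolute continuity, and Jensen's inequality. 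Proposition \ref{prop:compact} then produces a subsequential weak--star limit $\mu = \ms{(G,q,R)} \in \ce([a,b];\ms{G}_a,\ms{G}_b)$ satisfying $\jj_{\Lad,Q_a^b}(\mu) \le \int_a^b g(r)^2\,\rd r$, which gives in particular \eqref{eq:finite_length}. Combining this with the inequality $|\ms{G}_t'| \le \jj_{\Lad,\Omega}(\mu_t)^{1/2}$ from the ``if'' direction forces
\begin{equation*}
\int_a^b |\ms{G}_t'|^2\,\rd t \le \int_a^b \jj_{\Lad,\Omega}(\mu_t)\,\rd t = \jj_{\Lad,Q_a^b}(\mu) \le \int_a^b g(t)^2\,\rd t = \int_a^b |\ms{G}_t'|^2\,\rd t\,,
\end{equation*}
so all inequalities are equalities and \eqref{eq:meder} is saturated for a.e.\ $t$. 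For a general absolutely continuous curve with $L^1$ majorant, apply Lemma \ref{lemma:timescaling} to the (regularized) arc--length reparameterization $\ms{s}(t) := \int_a^t (g(r)+\epsilon)\,\rd r$, recover a connecting measure in the reparameterized variable via the $2$--energy case, and then transport it back, letting $\epsilon \downarrow 0$ using the scaling \eqref{eq:timescinlemma} and Proposition \ref{prop:compact}.

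\textbf{Uniqueness and explicit form.} Any other admissible pair $(\ms{q}',\ms{R}')$ saturating \eqref{eq:meder} gives a curve with the same total energy $\int_a^b |\ms{G}_t'|^2\,\rd t$, so its associated fields $(u',W') = (G^\dag q',\,G^\dag R')$ solve the minimum--norm problem \eqref{eq:sub_opt_2}. Uniqueness of the Riesz representation of $l_{\ms{G}}$ in the Hilbert space $H_{\ms{G},\Lad}(\ms{D}^*)$ (Theorem \ref{thm:dual_minmax}) forces $(u',W') = (u^*_\Lad,W^*_\Lad)$, hence $(\ms{q}^*,\ms{R}^*) = (\ms{G}u^*_\Lad\,\P_{\Lad_1},\,\ms{G}W^*_\Lad)$ is the unique optimal field, as claimed. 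The main obstacle is controlling the pointwise--in--$t$ structure of the weak--star limit $\mu$ obtained from the concatenation scheme: only after comparing the chain of integral inequalities above does the sharp pointwise identity $\jj_{\Lad,\Omega}(\mu_t)^{1/2} = |\ms{G}_t'|$ drop out, and this must be combined with the lower semicontinuity \eqref{eq:lsc_jalpha} and the minimality of $|\ms{G}_t'|$ as a majorant in the absolute--continuity inequality.
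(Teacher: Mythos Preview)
Your overall strategy matches the paper's (piecewise-geodesic concatenation followed by compactness), and the ``if'' direction as well as the uniqueness/explicit-form argument are essentially correct. However, there is a genuine gap in the ``only if'' direction.

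When you invoke Proposition~\ref{prop:compact}, the weak--star limit $\mu$ has a first component --- call it $\w{\ms{G}}$ --- that is \emph{a priori} different from the given curve $\{\ms{G}_t\}$. Your concatenated curves $\mu^n$ have first components $\ms{G}^n$ that coincide with the original $\ms{G}_t$ only at the partition points $t_j^n$; in between, $\ms{G}^n$ follows the minimizing geodesic, not the prescribed curve. Writing the limit as ``$\mu=(\ms{G},\ms{q},\ms{R})$'' thus silently assumes what must be proved. Without the identification $\w{\ms{G}}_t = \ms{G}_t$, the ``if'' direction only yields $|\w{\ms{G}}_t'| \le \jj_{\Lad,\Omega}(\mu_t)^{1/2}$, which says nothing about the metric derivative of the original curve, and your chain of equalities collapses.

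The paper closes this gap explicitly: for each $t$ one picks partition points $s_N \to t$; by construction $\ms{G}^N_{s_N} = \ms{G}_{s_N}$, Proposition~\ref{prop:compact} gives $\ms{G}^N_{s_N} \to \w{\ms{G}}_t$ weak--star, and since the original curve is ${\rm WB}_\Lad$--absolutely continuous, Proposition~\ref{prop:metric_prop} gives $\ms{G}_{s_N} \to \ms{G}_t$ weak--star as well. Hence $\w{\ms{G}}_t = \ms{G}_t$ for every $t$. Once this identification is in place, your integral-equality argument (pointwise inequality $|\ms{G}_t'|^2 \le \jj_{\Lad,\Omega}(\mu_t)$ together with equality of the integrals over $[a,b]$) does force the a.e.\ equality, and is in fact slightly cleaner than the paper's route via Lebesgue differentiation on arbitrary subintervals.
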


\begin{remark} \label{rem_ac}
As a corollary of Theorem \ref{prop:for_geodesic}, we have that $\mc{AC}([0,1];\ms{G}_0,\ms{G}_1)$ in \eqref{def:acce} is nothing else than the set of absolutely continuous curves with finite $2$-energy. 
\end{remark}

\begin{proof}
It suffices to consider the case $[a,b] = [0,1]$. 
We first consider the trivial \emph{if} part. For $\mu \in \w{\ce}([0,1];\ms{G}_0,\ms{G}_1)$ with the property \eqref{eq:finite_length}, it follows from \eqref{eq:length_energy} that 
\begin{align*} 
   {\rm WB}_{\Lad}(\ms{G}_s,\ms{G}_t) \le \int_s^t \jj_{\Lad,\Omega}(\mu_\tau)^{1/2}\, \rd \tau \q \forall 0 \le s\le t \le 1\,,
\end{align*}
which, by definition, readily implies that $\{\ms{G}_t\}_{t \in [0,1]}$ is absolutely continuous and 
\eqref{eq:meder} holds. 
We now consider the \emph{only if} part. Let $\{\ms{G}_t\}_{t \in [0,1]}$ be an absolutely continuous curve, which, by reparameterization, can be further assumed to be Lipschitz with the Lipschitz constant denoted by ${\rm Lip}(\ms{G}_t)$. We will approximate it by piecewise constant-speed curves. We fix an integer $N \in \NN$ with the step size $\tau = 2^{-N} $. Let $\{\mu_t^{k,N}\}_{t \in [(k-1)\tau,k \tau]}$ be a minimizer to \eqref{eq:change_time} with $[a,b] = [(k - 1)\tau, k \tau]$, which satisfies 
\begin{align} \label{auxeq_3201}
    \tau^{1/2} \jj_{\Lad,\Omega}(\mu^{k,N}_t)^{1/2} = \tau^{-1/2}{\rm WB}_{\Lad}(\ms{G}_{(k-1)\tau}, \ms{G}_{k \tau}) \le \Big(\int_{(k-1)\tau}^{k\tau} |\ms{G}_t'|^2 \,\rd t\Big)^{1/2}\,, \q a.e.\  t \in [(k-1)\tau,k\tau]\,,
\end{align}
by Lemma \ref{thm:geodesic_space} and the absolute continuity of $\ms{G}_t$.  
We glue the curves $\big\{\mu^{k,N}_t\big\}_{t \in [(k-1)\tau,k\tau]}$ with $k = 1,\ldots, 2^N$ and obtain a new one $\{\mu^N_t = (\ms{G}_t^N,\ms{q}_t^N,\ms{R}_t^N)\}_{t \in [0,1]} \in \ce_\infty([0,1];\ms{G}_0,\ms{G}_1)$.

Next, note that for any $(a,b) \subset [0,1]$, there exists $k_1^N, k_2^N \in \NN$ with $N$ large enough such that $[(k^N_1 + 1)\tau, (k_2^N - 1) \tau] \subset (a,b) \subset [k^N_1 \tau, k_2^N \tau]$. By squaring \eqref{auxeq_3201} and summing it from $k = k_1^N + 1$ to $ k = k_2^N$, there holds
\begin{align} \label{auxeq_3202}
\int_a^b \jj_{\Lad,\Omega}(\mu_t^N)\, \rd t \le \sum_{k = k^N_1 + 1}^{k^N_2} \int_{(k-1)\tau}^{k\tau} \jj_{\Lad,\Omega}(\mu^{k, N}_t)\, \rd t\le \int_{a}^{b} |\ms{G}_t'|^2\, \rd t + 2 \tau {\rm Lip}(\ms{G}_t)^2\,.
\end{align}
By taking $a = 0$, $b = 1$ in \eqref{auxeq_3202}, we observe that $\int_0^1 \jj_{\Lad,\Omega}(\mu_t^N) \,\rd t$ 
is uniformly bounded in $N$. By Proposition \ref{prop:compact}, up to a subsequence, 
$\{\mu^N_t\}_{t \in [0,1]}$ weak* converges to a measure $\w{\mu} = (\w{\ms{G}},\w{\ms{q}},\w{\ms{R}}) \in \ce_\infty([0,1],\ms{G}_0,\ms{G}_1)$. Moreover, 
it follows from \eqref{eq:lsc_jalpha} and \eqref{auxeq_3202} that, for $[a,b] \subset [0,1]$, 
\begin{align} \label{auxeq_3203}
    \int_a^b \jj_{\Lad,\Omega}(\w{\mu}_t)\, \rd t \le \liminf_{N \to +\infty} \int_a^b \jj_{\Lad,\Omega}(\mu_t^N)\, \rd t \le \int_{a}^{b} |\ms{G}_t'|^2\, \rd t \,.
    \end{align}

We now show $\w{\ms{G}}_t = \ms{G}_t$ for $0 \le t \le 1$.  
Note that for any $t \in [0,1]$, there exists a sequence of integers $k_N$ such that $s_N = k_N 2^{-N} \to t$ as $N \to \infty$, which implies that $\ms{G}^N_{s_N} = \ms{G}_{s_N}$ weak* converges to $\w{\ms{G}}_t$ by Proposition \ref{prop:compact}.  Meanwhile, $\ms{G}_{s_N}$ weak* converges to $\ms{G}_t$ by the continuity of 
 $\ms{G}_t$. We hence have 
   $\w{\ms{G}}_t = \ms{G}_t$. Then, it follows from \eqref{auxeq_3203} that
\begin{equation*}
    \jj_{\Lad,\Omega}(\w{\mu}_t) = \jj_{\Lad,\Omega}(\ms{G}_t,\w{\ms{q}}_t,\w{\ms{R}}_t) \le |\ms{G}_t'|^2\,,
\end{equation*} 
by Lebesgue differentiation theorem. The proof of the \emph{only if} direction is completed by noting that \eqref{eq:finite_length} and \eqref{eq:meder} are invariant with respect to the parameterization. The uniqueness of $(\ms{q}_*, \ms{R}_*)$ follows from the linearity of the continuity equation in the variable $(\ms{q},\ms{R})$ and the strict convexity of the $L^2_{\ms{G}}$-norm. 

We finally show that when $\ms{G}_t$ is absolutely continuous with finite $2$-energy, 
$\mu := (\ms{G}, \ms{G}u_*, \ms{G}W_*) \in \ce_\infty([0,1];\ms{G}_0,\ms{G}_1)$ satisfies $\jj_{\Lad,\Omega}(\mu_t)^{1/2} \le |\ms{G}_t'|$ for a.e.\,$t \in [0,1]$, where
$(u_*,W_*)$ is given in Theorem \ref{thm:dual_minmax} (i.e., the Riesz representation of $\w{l}_{\ms{G}}$ in $H_{\ms{G},\Lad}(\ms{D}^*)$).
Let $(a,b) \subset [0,1]$, and $\eta \in C_c^\infty((a,b))$ with $0 \le \eta \le 1$, and $\{(\ms{D}^* \Phi_n \Lad_1^2, \Phi_n \Lad_2^2)\}$ with $\Phi_n \in C^1(Q,\S^n)$ be a sequence approximating
$(u_*,  W_*)$. Then, by using \eqref{eq:continui_equation} and noting $\ms{D}^* (\eta^2 \Phi) = \eta^2 \ms{D}^* (\Phi)$, we have  
\begin{align} \label{auxeq_3204}
    &\normm{(\eta u_*, \eta W_*)}^2_{L^2_{\ms{G},\Lad}(Q)} = \lim_{n \to + \infty} \big\l (\eta^2 u_*, \eta^2 W_*), (\ms{D}^*\Phi_n \Lad_1^2, \Phi_n \Lad_2^2) \big\r_{L^2_{\ms{G},\Lad}(Q)} = \lim_{n \to + \infty} l_{\ms{G}}(\eta^2 \Phi_n)\,.
\end{align}
By \emph{only if} part proved above, there exists some $\ms{(q,R)}$ such that
\begin{align} \label{auxeq_3205}
    \left|l_{\ms{G}}(\eta^2 \Phi_n)\right| &\le \normm{(G^\dag q,  G^\dag R)}_{L^2_{\ms{G},\Lad}(Q_a^b)} \normm{(\ms{D}^* \eta^2 \Phi_n, \eta^2 \Phi_n)}_{L^2_{\ms{G},\Lad}(Q_a^b)} \notag \\
    & \le \Big(\int_a^b |\ms{G}_t'|^2 \, \rd t \Big)^{1/2}  \normm{( \ms{D}^* \Phi_n, \Phi_n)}_{L^2_{\ms{G},\Lad}(Q_a^b)}\,.
\end{align}
Combining \eqref{auxeq_3204} with \eqref{auxeq_3205} and letting $\eta$ approximate $\chi_{[a,b]}$, we obtain 
\begin{equation} \label{eq5}
    \normm{( u_*, W_*)}_{L^2_{\ms{G},\Lad}(Q_a^b)} \le \Big(\int_a^b |\ms{G}_t'|^2 \,\rd t \Big)^{1/2}\,.
\end{equation}
Then, by Lebesgue differentiation theorem again, the inequality \eqref{eq5} gives the desired $\jj_{\Lad,\Omega}(\mu_t)^{1/2} \le |\ms{G}_t'|$ for the measure $\mu = (\ms{G}, \ms{G}u_*, \ms{G}W_*)$. The proof is complete.
\end{proof}

From Lemma \ref{thm:geodesic_space} and Theorem \ref{prop:for_geodesic}, we have 
\begin{align} \label{eq:lengthspace}
    {\rm WB}_\Lad(\ms{G}_0,\ms{G}_1) & = \inf_{\ms{G}} \inf_{\ms{(q,R)}} \Big\{\int_0^1 \jj_{\Lad,\Omega}(\mu_t)^{1/2} \,\rd t\,;\ \mu = \ms{(G,q,R)} \in \w{\ce}([0,1];\ms{G}_0,\ms{G}_1) \Big\}\notag \\
    & = \inf_{\ms{G}} \Big\{ \int_0^1 |\ms{G}_t'| \, \rd t\,; \ \{\ms{G}\}_{t \in [0,1]} \ \text{is absolutely continuous with}\ \ms{G}_t|_{t = 0} = \ms{G}_0\,, \ms{G}_t|_{t = 1} = \ms{G}_1  \Big\}\,.
\end{align}
Note that if $\{\mu_t\}_{t \in [0,1]} \in \ce_\infty([0,1];\ms{G}_0,\ms{G}_1)$ minimizes
\eqref{eq:distance}, then for any $0 \le a <  b \le 1$, $\{\mu_t\}_{t \in [a,b]}$ is a minimizer to \eqref{eq:change_time} with $\ms{G}_0 = \ms{G}_t|_{t = a}$ and $\ms{G}_1 = \ms{G}_t|_{t = b}$. Recalling the constant-speed property \eqref{eq:cons_energy} of the minimizer $\mu = \ms{(G,q,R)}$, we readily see that the associated $\{\ms{G}_t\}_{t \in [0,1]}$ is the desired constant-speed geodesic: 
\begin{align} \label{eq:cons_spped}
    {\rm WB}_{\Lad}(\ms{G}_s,\ms{G}_t) =  |t - s| {\rm WB}_{\Lad}(\ms{G}_0,\ms{G}_1)\,, \q \forall 0 \le s \le t \le 1\,.
\end{align} 
It allows us to conclude that the $\inf$ in \eqref{eq:lengthspace} is attained, and the  main result follows. 

\begin{corollary} \label{cor:geo_me}
    $(\M(\Omega,\S^n_+), {\rm WB}_{\Lad})$ is a geodesic space. The constant-speed geodesic connecting $\ms{G}_0, \ms{G}_1 \in \M(\Omega,\S^n_+)$ is given by the minimizer to \eqref{eq:distance}. 
\end{corollary}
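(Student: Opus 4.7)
The plan is to assemble the ingredients already established. By Proposition \ref{prop:metric_prop} we know $(\M(\Omega,\S^n_+), {\rm WB}_{\Lad})$ is a complete metric space, so it remains to show that any $\ms{G}_0, \ms{G}_1 \in \M(\Omega,\S^n_+)$ can be joined by an absolutely continuous curve whose length realizes ${\rm WB}_{\Lad}(\ms{G}_0,\ms{G}_1)$, and that this realizing curve is nothing but (the time disintegration of) the minimizer to \eqref{eq:distance}.

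First I would take a minimizer $\mu = \ms{(G,q,R)} \in \ce([0,1];\ms{G}_0,\ms{G}_1)$ to \eqref{eq:distance}, whose existence is guaranteed by Theorem \ref{thm:strong_dual}. Proposition \ref{prop:disinte_path} identifies $\ms{G}$ with a weak--star continuous curve $\{\ms{G}_t\}_{t \in [0,1]}$ via disintegration. Combining the length representation \eqref{eq:length_energy} in Proposition \ref{thm:geodesic_space} with the constant-speed identity \eqref{eq:cons_energy}, the energy-minimizing curve also minimizes the length functional, so
\begin{equation*}
{\rm WB}_{\Lad}(\ms{G}_0,\ms{G}_1) = \int_0^1 \jj_{\Lad,\Omega}(\mu_t)^{1/2} \rd t\,.
\end{equation*}
Theorem \ref{prop:for_geodesic} now provides the bridge to the metric side: $\{\ms{G}_t\}_{t \in [0,1]}$ is absolutely continuous with metric derivative bounded by $|\ms{G}_t'| \le \jj_{\Lad,\Omega}(\mu_t)^{1/2}$ a.e. Together with the trivial lower bound $\int_0^1 |\ms{G}_t'|\rd t \ge {\rm WB}_{\Lad}(\ms{G}_0,\ms{G}_1)$ (immediate from absolute continuity and the triangle inequality), this pinches the length of $\{\ms{G}_t\}$ to equal ${\rm WB}_{\Lad}(\ms{G}_0,\ms{G}_1)$, thereby realizing the infimum in \eqref{eq:lengthspace}.

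Finally, I would promote this length-minimizer to a constant-speed geodesic. Using the stability of minimizers of \eqref{eq:distance} under restriction of the time interval (already observed in the paragraph preceding the statement), for any $0 \le s < t \le 1$ the restriction of $\mu$ to $[s,t]$ is a minimizer joining $\ms{G}_s$ and $\ms{G}_t$; applying the constant-speed formula \eqref{eq:cons_energy} both to the full interval and to $[s,t]$ yields
\begin{equation*}
{\rm WB}_{\Lad}(\ms{G}_s,\ms{G}_t) = (t-s)\jj_{\Lad,\Omega}(\mu_\tau)^{1/2} = |t-s|\,{\rm WB}_{\Lad}(\ms{G}_0,\ms{G}_1)\,,
\end{equation*}
which is precisely the defining property of a constant-speed geodesic. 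I do not expect a genuine obstacle here since the corollary is essentially a repackaging of previous results; the only point requiring care is to keep straight the identification between the measure-theoretic curve furnished by Proposition \ref{prop:disinte_path} and the metric-theoretic curve in $(\M(\Omega,\S^n_+),{\rm WB}_{\Lad})$, which is exactly what Theorem \ref{prop:for_geodesic} supplies.
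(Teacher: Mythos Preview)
Your proposal is correct and follows essentially the same approach as the paper: the paragraph preceding the corollary already assembles exactly these pieces---the length representation \eqref{eq:lengthspace} from Proposition \ref{thm:geodesic_space} and Theorem \ref{prop:for_geodesic}, the restriction-stability of minimizers, and the constant-speed identity \eqref{eq:cons_energy}---to obtain \eqref{eq:cons_spped}, with the corollary then stated as an immediate consequence. Your write-up is slightly more explicit (e.g., the pinching argument via the triangle-inequality lower bound on length), but the logical route is the same.
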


Another important application of Theorem \ref{prop:for_geodesic} is that we can view the set of  $\S^n_+$-valued measures as a pseudo-Riemannian manifold, following \cite[Proposition 8.4.5]{ambrosio2008gradient}. We define the tangent space at each $\ms{G} \in \mc{M}(\Omega,\S_+^n)$ by 
\begin{align} \label{def:tang}
    \Tan(\ms{G}): = \big\{&\ms{(q,R)} \in \mc{M}(\Omega,\R^{n \t k} \t \MM_n)\,; \ \jj_{\Lad,\Omega}(\ms{\mu}) < \infty \ \, \text{with}\ \, \mu = \ms{(G,q,R)} \in \mc{M}(\Omega, \xx); \notag \\ 
    & \jj_{\Lad,\Omega}(\mu) \le  \jj_{\Lad,\Omega}(\ms{(G,q+ \h{q}, R + \h{R})})\,,\ \forall \ms{(\h{q}, \h{R})} \ \text{satisfying} \ \ms{D} \h{\ms{q}} =\h{ \ms{R}}^{\rm sym}
   \big\}\,.
\end{align}
From Theorem \ref{prop:for_geodesic}, we have that among all the measures $\ms{(q,R)}$ 
generating $\{\ms{G}_t\}_{t \in [0,1]}$ by the continuity equation, there is a unique one $\ms{(q_*,R_*)}$ with minimal $\jj_{\Lad,\Omega}(\mu_t)$ given by $|\ms{G}_t'|$ for a.e.\,$t \in [0,1]$, that is,  $(\ms{q}_{*,t},\ms{R}_{*,t}) \in \Tan(\ms{G}_t)$ a.e.\,by \eqref{def:tang}. We also introduce the space $\Tan_{field}(\ms{G})$ similar to  $H_{\ms{G},\Lad}(\ms{D}^*)$ \eqref{def:hladd}:
$$
 \Tan_{field}(\ms{G}) = \overline{\left\{(\ms{D}^* \Phi \Lad_1^2, \Phi \Lad_2^2)\,;\ \Phi \in C^1(\Omega,\S^n)\right\}}^{\norm{\dd}_{L^2_{\ms{G},\Lad}(\Omega)}}\,.
$$
Then, similarly to the argument for Theorem \ref{thm:dual_minmax}, the tangent space $\Tan(\ms{G})$ can be characterized as follows:
\begin{equation} \label{eq6}
    \ms{(q,R)} \in \Tan(\ms{G})\q \text{if and only if}\q \ms{(q,R)}= \ms{G}(u,W)\  \text{with}\  (u,W) \in \Tan_{field}(\ms{G})\,.
\end{equation}
We summarize the above discussions in the following corollary, which provides a Riemannian interpretation of the transport distance ${\rm WB}_\Lad(\dd,\dd)$. 

\begin{corollary} \label{coro:riemann}
Let $\{\ms{G}_t\}_{t \in [0,1]}$ be an absolutely continuous curve in $(\M(\Omega,\S_+^n),{\rm WB}_\Lad)$ and $\{(\ms{q}_t,\ms{R}_t)\}_{t \in [0,1]}$ be the family of measures in 
$\M(\Omega,\R^{n \t k} \t \MM^n)$
such that $\mu = (\ms{G},\ms{q},\ms{R}) \in \ce ([0,1];\ms{G}_0,\ms{G}_1)$ and $ \jj_{\Lad,\Omega}(\mu_t)$ is finite a.e..
Then $|\ms{G}_t'| = \jj_{\Lad,\Omega}(\mu_t)$
holds for a.e.\,$t \in [0,1]$ if and only if $(\ms{q}_t, \ms{R}_t) \in \Tan(\ms{G}_t)$ a.e.,
where $\Tan(\ms{G})$ is defined in \eqref{def:tang}
and characterized by \eqref{eq6}.
Moreover, for absolutely continuous $\ms{G}_t$ with finite 2-energy (i.e., $\ms{G} \in \mc{AC}([0,1];\ms{G}_0,\ms{G}_1)$), let $(u_*,W_*)$ be the unique minimizer to \eqref{eq:sub_opt_2}. Then, there holds $(u_{*,t},W_{*,t}) \in \Tan_{field}(\ms{G}_t)$ a.e..
\end{corollary}

\section{Cone space and spherical distance} \label{seccone}

In this section, we discuss the conic structure of our weighted transport distance ${\rm WB}_\Lad$, which extends the results in \cite[Section 4]{brenier2020optimal} and \cite[Section 5]{monsaingeon2020schr}. The starting point is a spherical  distance associated with ${\rm WB}_\Lad$: 
 \begin{align} \label{def:sphdist}
        {\rm SWB}_{\Lad}^2(\ms{G}_0,\ms{G}_1) = \inf\big\{\jj_{\Lad,Q}(\mu)\,;\ \mu \in \w{\ce}([0,1];\ms{G}_0,\ms{G}_1)\,, \tr_\Lad\ms{G}_t(\Omega) = 1\big\}\,,\q \text{for}\ \ms{G}_0,\ms{G}_1 \in \mc{M}_1\,,
\end{align}
where $\tr_\Lad(X) = \tr\big(\w{\Lad}_2^{-1}X\w{\Lad}_2^{-1}\big)$ with $\w{\Lad}_2 = n \Lad_2/\tr(\Lad_2)$ is the scaled trace and 
\begin{equation} \label{def:normeasure}
    \mc{M}_1: = \{\ms{G} \in \mc{M}(\Omega,\S_+^n)\,;\ \tr_\Lad \ms{G}(\Omega) = 1\}\,.
\end{equation}
We will prove that $(\M_1, {\rm SWB}_{\Lad})$ is a complete geodesic space and $(\M(\Omega,\S_+^n),{\rm WB}_\Lad)$ can be viewed as its metric cone. 
Let us first recall some basic concepts \cite{burago2022course,laschos2019geometric}. We consider a metric space $(X,d_X)$ with  diameter ${\rm diam}(X) = \sup_{x,y\in X}d_X(x,y) \le \pi$. The associated cone is defined by $\mathfrak{C}(X) := X \t [0,\infty) \backslash X \times \{0\}$ with the metric 
\begin{equation} \label{eq:conedistance}
d^2_{\mathfrak{C}(X)}([x_0,r_0],[x_1,r_1]) := r_0^2 + r_1^2 - 2 r_0 r_1 \cos(d_X(x_0,x_1))\,,
\end{equation}
where a point in $\mathfrak{C}(X)$ is of the form $[x,r]$ with $x \in X$ and $r \ge 0$ and satisfies  the equivalence relation $[x_0,0] \sim [x_1,0]$. It can be proved that for $x_0, x_1 \in X$ with $0 < d_X(x_0, x_1) < \pi$ and $r_0,r_1 > 0$, there is one-to-one correspondence between the geodesics for $d_{\cc(X)}([x_0,r_0],[x_1,r_1])$ and for $d_{X}(x_0,x_1)$; see \cite[Theorem 2.6]{laschos2019geometric}. In particular, we have the following useful lemmas from \cite[Lemma 4.4]{brenier2020optimal}
and \cite[Theorem 2.2]{laschos2019geometric}, respectively. 
\begin{lemma}\label{lemcone1}  
If $X$ is a length space, then the distance $d_X(x_0,x_1)$ can be characterized by
\begin{align*}
    d_X(x_0,x_1) = \inf\Big\{\int_0^1 \big|[x_t,1]'\big|_{\cc(X)}\,\rd t\,;\ [x_t,1]\ \text{is absolutely continuous and connects}\ [x_0,1]\ \text{and}\ [x_1,1]\Big\}\,,
\end{align*}
where $|[x_t,1]'|_{\cc(X)}$ is the metric derivative in the space $(\cc(X),d_{\cc(X)})$. 
\end{lemma}


\begin{lemma} \label{lemcone3}
Let $\cc(X)$ be the cone as above and $(\cc(X), d)$ be a metric space for some metric $d$. If there holds 
\begin{equation} \label{eq:scaling}
    d^2([x_0,r_0], [x_1, r_1]) = r_0 r_1 d^2([x_0,1],[x_1,1]) + (r_0 - r_1)^2\,,
\end{equation}
and $0 < d^2([x_0,1],[x_1,1]) \le 4$ for $x_0 \neq x_1$, then $d_X(x_0,x_1): = \arccos(1 - d^2([x_0,1], [x_1, 1])/2)$ is a metric on $X$ such that \eqref{eq:conedistance} holds, equivalently, $(\cc(X), d)$ is a metric cone over $(X,d_X)$. 
\end{lemma}

We are now ready to consider the conic properties of $(\M(\Omega,\S^n_+), {\rm WB}_{\Lad})$. For this, we set $r := \sqrt{\tr_\Lad (\ms{G}(\Omega))} \ge 0$ for a measure $\ms{G} \in \mc{M}(\Omega,\S_+^n)$ and identify $\ms{G}$ with $[\ms{G}/r^2,r] \in \cc(\mc{M}_1)$. 

\begin{proposition} \label{propconee}
Suppose there holds $\ms{D}^*(\Lad_2^{-2}) = 0$ and let $c := \sqrt{2}n/\tr(\Lad_2)$. We have that
$(\M(\Omega,\S_+^n),{\rm WB}_\Lad/c)$ is a metric cone over $(\mc{M}_1, d)$ for some metric $d$.    
\end{proposition}
\begin{proof}
We note from \eqref{auxeqscaling} in the proof of 
Lemma \ref{lem:com_kb_and_b} that
\begin{equation*}
        {\rm WB}^2_{\Lad}(\ms{G}_0,\ms{G}_1) \le 
2 \int_{\Omega} \Big\lVert \Big(\sqrt {G_1} - \sqrt{G_0}\Big) \Lad_2^{-1} \Big\lVert_\ff^2\ \rd \lad \le 4 \big(n/\tr(\Lad_2)\big)^2 \big(\tr_{\Lad} \ms{G}_0(\Omega) + \tr_{\Lad}\ms{G}_1(\Omega)\big)\,,
\end{equation*}
which yields ${\rm WB}_{\Lad}^2(\ms{G}_0,\ms{G}_1) \le 4 c^2$ for $\ms{G}_0,\ms{G}_1 \in \mc{M}_1$. By Lemma \ref{lemcone3}, it suffices to check the scaling property \eqref{eq:scaling}:
\begin{equation}\label{eqsscalingeb}
     {\rm WB}_{\Lad}^2(r_0^2 \ms{G}_0, r_1^2 \ms{G}_1)/c^2 = r_0 r_1 {\rm WB}_{\Lad}^2(\ms{G}_0, \ms{G}_1)/c^2 + (r_0 - r_1)^2\,,
\end{equation}
for $\ms{G}_0, \ms{G}_1 \in \mc{M}_1$ and $r_0,r_1 \ge 0$ to show that $(\M(\Omega,\S_+^n),{\rm WB}_\Lad/c)$ is a 
metric cone. Note that \eqref{eqsscalingeb} for the case of $r_0 = 0$ or $r_1 = 0$ follows from Proposition \ref{prop:inflatingmeaure}. Thus, 
we can assume $r_0, r_1 > 0$. Let $\{\mu_t = (\ms{G}_t,\ms{q}_t,\ms{R}_t)\}_{t \in [0,1]} \in \w{\ce}([0,1];\ms{G}_0,\ms{G}_1)$ be an admissible 
curve. We define scalar functions $b(t) = r_0 + (r_1 - r_0)t$ and $a(t) := t r_1 /b(t)$. It is clear that $a(t)$ is strictly increasing with  inverse denoted by $t(a)$. We then define 
$\w{\ms{G}}_t = b(t)^2\ms{G}_{a((t)}$ with 
\begin{equation*}
    \w{\ms{q}}_t = a'(t)b(t)^2 \ms{q}_{a(t)}\,,\q \w{\ms{R}}_t = a'(t)b(t)^2 \ms{R}_{a(t)} + 2b(t)(r_1 - r_0)\ms{G}_{a(t)}\,,
\end{equation*}
which satisfies the continuity equation with endpoints $r_0^2 \ms{G}_0$ and $r_1^2 \ms{G}_1$. We now compute 
\begin{align}  \label{auxeqcone1} \jj_{\Lad,Q}\big(\w{\ms{G}},\w{\ms{q}},\w{\ms{R}}\big) = & \int_0^1 a'(t(a))\,  b(t(a))^2  \jj_{\Lad,\Omega}(\ms{G}_a, \ms{q}_a, \ms{R}_a) \, \rd a 
 + c^2 (r_1 - r_0)^2 \int_0^1 \tr_\Lad \ms{G}_{a(t)}(\Omega)\, \rd t \notag \\
 & + c^2 \int_0^1  b(t(a))\, (r_1 - r_0) \tr_\Lad \ms{R}_{a}(\Omega)\, \rd a\,.
\end{align}
 The last two terms in \eqref{auxeqcone1} can be  simplified by \eqref{eq:weak_ctneq} on $[0,1]$ with test function $\Phi_s = b(t(s)) \,\Lad_2^{-2}$: 
 \begin{equation*}
     \int_{0}^1 t'(a) (r_1 - r_0) \tr_\Lad \ms{G}_a(\Omega) +   b(t(a))   \tr_\Lad \ms{R}_a(\Omega) \,\rd a = r_1 \tr_\Lad \ms{G}_1(\Omega) - r_0 \tr_\Lad \ms{G}_0(\Omega)\,,
 \end{equation*}
 which implies, thanks to $\tr_\Lad \ms{G}_0(\Omega) = \tr_\Lad \ms{G}_1(\Omega) = 1$,
  \begin{equation} \label{auxeqcone2} 
     \int_{0}^1  (r_1 - r_0)^2 \tr_\Lad \ms{G}_{a(t)}(\Omega)\, \rd t  +  \int_0^1  b(t(a)) (r_1 - r_0)  \tr_\Lad \ms{R}_a(\Omega) \,\rd a = (r_1 - r_0)^2\,.
 \end{equation}
Therefore, by noting $a'(t) b(t)^2 = r_0 r_1$ and using \eqref{auxeqcone2}, it follows that 
\begin{align*}
\jj_{\Lad,Q}\big(\w{\ms{G}},\w{\ms{q}},\w{\ms{R}}\big) =  r_0 r_1 \int_0^1  \jj_{\Lad,\Omega}(\ms{G}_a, \ms{q}_a, \ms{R}_a) \, \rd a 
 + c^2 (r_1 - r_0)^2\,,
\end{align*}
which readily gives ${\rm WB}_{\Lad}^2(r_0^2 \ms{G}_0, r_1^2 \ms{G}_1)/c^2 \le r_0 r_1 {\rm WB}_{\Lad}^2(\ms{G}_0, \ms{G}_1)/c^2 + (r_0 - r_1)^2$. The other direction can be proved similarly and the proof is complete. 
\end{proof}

\begin{theorem} \label{thmcone}
Suppose there holds $\ms{D}^*(\Lad_2^{-2}) = 0$ and let $c := \sqrt{2}n/\tr(\Lad_2)$. Then, $(\M(\Omega,\S_+^n),{\rm WB}_\Lad/c)$ is a metric cone over $(\M_1, {\rm SWB}_\Lad/c)$, namely, for $\ms{G}_0, \ms{G}_1 \in \mc{M}_1$ and $r_0,r_1 \ge 0$, 
\begin{equation} \label{auxeqcone3}
     {\rm WB}_{\Lad}^2(r_0^2 \ms{G}_0, r_1^2 \ms{G}_1)/c^2 = 
     r_0^2 + r_1^2 - 2 r_0 r_1 \cos({\rm SWB}_\Lad(\ms{G}_0, \ms{G}_1)/c)\,,
\end{equation}
and $(\M_1, {\rm SWB}_\Lad/c)$ is a complete geodesic space with ${\rm diam}(\M_1) \le \pi$. 
\end{theorem}

\begin{proof}
We first show that the metric $d$ on $\mc{M}_1$ in Proposition \ref{propconee} is given by ${\rm SWB}_\Lad/c$. By Corollary \ref{cor:geo_me} 
and \cite[Corollary 5.11]{bridson2013metric}, we have that $(\mc{M}_1,d)$ is a geodesic space, which, by Lemma \ref{lemcone1}, gives, for $\ms{G}_0,\ms{G}_1 \in \mc{M}_1$, 
\begin{equation*}
    d(\ms{G}_0,\ms{G}_1) = \inf\Big\{\int_0^1 |\ms{G}_t'|\, \rd t\,;\ \ms{G}_t\ \text{is absolutely continuous in} \ (\M(\Omega,\S_+^n),{\rm WB}_\Lad/c)\ \text{with}\ \ms{G}_t \in \mc{M}_1  \Big\}\,.
\end{equation*}
It then follows from Theorem \ref{prop:for_geodesic} and definition \eqref{def:sphdist} that $d(\ms{G}_0,\ms{G}_1) = {\rm SWB}_{\Lad}(\ms{G}_0,\ms{G}_1)/c$ and hence \eqref{auxeqcone3} holds. 
Recalling ${\rm WB}_{\Lad}^2(\ms{G}_0,\ms{G}_1)/c^2 \le 4$ for $\ms{G}_0,\ms{G}_1 \in \mc{M}_1$, \eqref{auxeqcone3} gives $0 \le {\rm SWB}_\Lad(\ms{G}_0, \ms{G}_1)/c \le \pi$. 
Finally, for the completeness of $(\M_1, {\rm SWB}_\Lad/c)$, it suffices to note that ${\rm SWB}_\Lad$ and ${\rm WB}_\Lad$ are topologically equivalent on $\M_1$, again by \eqref{auxeqcone3}, and $\M_1$ is a closed set in $(\M(\Omega,\S_+^n),{\rm WB}_\Lad)$ by Proposition \ref{prop:metric_prop}.
\end{proof}

\section{Example and discussion} \label{sec:example_model}

In this section, we detail the connections between our model \eqref{eq:distance} and the existing ones.

\medskip

\noindent \textbf{Example} (Kantorovich-Bures metric \cite{brenier2020optimal}).  
We set the dimension parameters $n = m =d$ and $k = 1$ and the weighted matrices $\Lad_i = I$ for $i = 1, 2$ in \eqref{eq:closed_convex_set} and consider the differential operator $\ms{D} = \na_s$ for the continuity equation \eqref{eq:weak_ctneq}, where $ \na_s$ is the symmetric gradient defined by $\na_s(q) = \frac{1}{2}(\na q + (\na q)^{{\rm T}})$ for a smooth vector field $q \in C_c^\infty(\R^d,\R^d)$. Then, \eqref{eq:distance} gives the convex formulation of the Kantorovich-Bures metric $d_{KB}$ on $\M(\Omega, \S_+^d)$ \cite[Definition 2.1]{brenier2020optimal}: 
\begin{multline} 
{\rm WB}^2_{(I,I)}(\ms{G}_0,\ms{G}_1) = \frac{1}{2}d^2_{KB}(\ms{G}_0,\ms{G}_1) = \inf\big\{\jj_{\Lad,Q}(\mu)  \,;\ \mu = \ms{(G,q,R)} \in \M(Q,\xx)\ \text{satisfies} \notag\\ 
 \p_t \ms{G} = \{ - \na \ms{q}_t + \ms{R}_t\}^{\sym} \ \text{with}\  \ms{G}_t|_{t = 0} = \ms{G}_0\,,\ \ms{G}_t|_{t = 1} = \ms{G}_1
\big\}\,,\tag{$\mathcal{P}_{\rm WB}$}\label{def:kan_bure}
\end{multline} 
for $\ms{G}_0, \ms{G}_1 \in \M(\Omega,\S^d_+)$, where $\jj_{\Lad, Q}(\mu)$ with $\Lad = (I,I)$ is given by \eqref{rep:enerycost}:
\begin{equation*}
\jj_{\Lad,Q}(\mu) = \frac{1}{2} \norm{G^\dag q }^2_{L^2_{\ms{G}}(Q)} + \frac{1}{2} \norm{G^\dag R}^2_{L^2_{\ms{G}}(Q)}\,. 
\end{equation*}

\noindent \textbf{Example} (Wasserstein-Fisher-Rao metric \cite{chizat2018interpolating,kondratyev2016new,liero2016optimal}). If we set $n = m = 1$, $k = d$, and $\Lad_1 = \sqrt{\alpha} I$, $\Lad_2 = \sqrt{\beta}I$ with $\alpha, \beta > 0$, and consider the differential operator $\ms{D} = \ddiv$, then \eqref{eq:distance} gives the Wasserstein-Fisher-Rao metric \cite[(3.1)]{liero2016optimal}: for given distributions $\rho_0, \rho_1 \in \M(\Omega,\R_+)$, 
\begin{multline} 
{\rm WFR}^2(\rho_0, \rho_1)  = \inf\Big\{\int_0^1 \int_\Omega \rho^{\dag}\Big(\frac{1}{2\alpha}|q|^2 + \frac{1}{2\beta} r^2\Big)\,\rd x\,\rd t \,;\ 
 \p_t \rho + \ddiv \,q = r \ \text{with}\  \rho_t|_{t = 0} = \rho_0\,,\ \rho_t|_{t = 1} = \rho_1 \Big\}\,.\tag{$\mathcal{P}_{\rm WFR}$} \label{def:wfr_metric}
\end{multline}

\noindent \textbf{Example} (Matricial interpolation distance \cite{chen2019interpolation}).
Let $N$ be a positive integer and $(\MM^n)^N$ denote the space of block-row vectors $(A_1,\ldots,A_N)$ with $A_i \in \MM^n$. The spaces $(\S^n)^N$ and $(\AA^n)^N$ are defined similarly. For $M \in (\MM^n)^N$, we define its component transpose by $M^t := (M_1^{\rm T},\ldots, M_N^{\rm T})$. We fix a sequence of symmetric matrices $\{L_k\}_{k=1}^N \subset \S^n$ and define the linear operator $\na_L : \S^n \to (\AA^n)^N$ by $(\na_L X)_k =  L_k X - XL_k$. We denote by $\na_L^*$ its dual operator with respect to the Frobenius inner product. 
We now let $k = n (d + N)$ and write $\ms{q} \in  \M(Q, \R^{n \t k})$ for $[\ms{q}_0, \ms{q}_1]$ with $\ms{q}_0 \in \M(Q, (\MM^n)^d)$ and $\ms{q}_1 \in \M(Q,(\MM^n)^N)$. With the above notions, we define  
$$
\ms{D}\,\ms{q} := \frac{1}{2}\ddiv (\ms{q}_0 + \ms{q}_0^t)  - \frac{1}{2}\na_L^* (\ms{q}_1 - \ms{q}_1^t)\,. 
$$  
Then, it is clear that \eqref{eq:distance} with weighted matrices $\Lad_i = I$ for $i = 1,2$ gives 
the model in \cite[(5.7a)--(5.7c)]{chen2019interpolation}:
\begin{multline} \label{def:chen19int}
       {\rm W}_{2,{\rm FR}}(\ms{G}_0, \ms{G}_1)^2 = \frac{1}{2} \inf \big\{ \norm{G^\dag q_0}^2_{L^2_{\ms{G}(Q)}} + \norm{G^\dag q_1}^2_{L^2_{\ms{G}(Q)}} + \norm{G^\dag R}^2_{L^2_{\ms{G}(Q)}}\,; \\
\p_t \ms{G} = - \frac{1}{2}\ddiv (\ms{q}_0 + \ms{q}_0^t) + \frac{1}{2}\na_L^* (\ms{q}_1 - \ms{q}_1^t) + \ms{R}^{{\rm sym}}\ \text{with}\ \ms{G}_t|_{t = 0} = \ms{G}_0\,,\  \ms{G}_t|_{t = 1} = \ms{G}_1 \big\}\,.
       \tag{$\mathcal{P}_{2,{\rm FR}}$} 
\end{multline}

\smallskip 

We next relate our model \eqref{eq:distance} to the matrix-valued optimal ballistic transport problems in \cite{brenier2018initial,vorotnikov2022partial}. As reviewed in the introduction, 
Brenier \cite{brenier2018initial} recently attempted to find the weak solution of the incompressible Euler equation on the domain $[0,T] \times \Omega \subset \R^{1 + d}$ (we omit the initial and boundary conditions for simplicity): 
\begin{equation} \label{euler0}
 \p_t v + \ddiv\,(v \otimes v)  + \na  p = 0\,,\q \ddiv\, v = 0\,,
\end{equation}
by  minimizing the kinetic energy $\int_0^T\int_\Omega |v(t,x)|^2 \, \rd x\, \rd t$, where $v$ is a $\R^n$-valued vector field and $p$ is a scalar function. It turns out that
this problem admits a concave maximization dual problem, to which the relaxed solution always exists under very light assumptions. Such an approach was extended by Vorotnikov \cite{vorotnikov2022partial} in an abstract functional analytic framework that includes a broad class of PDEs with quadratic nonlinearity as examples, such as the Hamilton-Jacobi equation, the template matching equation, and the multidimensional Camassa-Holm equation. More precisely, \cite{vorotnikov2022partial} considered the following abstract Euler equation on $[0,T] \times \Omega$\,:
\begin{equation} \label{eq:abseular}
    \p_t v = \ms{P} \circ \ms{L}\, (v \otimes v)\,,\q v(0,\dd) = v_0 \in \ms{P}(L^2(\Omega, \R^n))\,, 
\end{equation}
where $\ms{P}$ is an orthogonal projection and $\ms{L}: L^2(\Omega,\S^n) \to L^2(\Omega,\R^n)$ is a (closed densely defined) linear operator. One can see that
for $\ms{L} = - \ddiv$ and $\ms{P}$ being the Leray projection, the problem \eqref{eq:abseular} reduces to \eqref{euler0}.
The dual problem associated with the weak solution of \eqref{eq:abseular} with minimal kinetic energy reads as follows:
 \begin{align} \label{dual1}
 \sup \Big\{\int_0^T\int_\Omega v_0 \dd q  - \frac{1}{2} q \dd G^\dag q \ \rd x\, \rd t\,;\ \p_t G + 2 (\ms{L}^* \circ \ms{P})\,q = 0\ \text{with}\  G(T) = I \Big\}\,,
\end{align}
where $G$ and $q$ are $\S^n_{+}$-valued and $\R^n$-valued vector fields, respectively. Note that the Hamilton-Jacobi equation $\p_t \psi + \frac{1}{2} |\na \psi|^2 = 0$ can be reformulated as $\p_t v + \frac{1}{2} \na \tr(v \otimes v) = 0$ by letting $v = \na \psi$, which is a special case of \eqref{eq:abseular} with $\ms{P} = I$ and $\ms{L} = - \frac{1}{2} \na \tr$. The corresponding dual maximization problem is given by
 \begin{align} \label{dual2}
 \sup \Big\{-\int_\Omega \psi_0 \rho_0 \, \rd x - \frac{1}{2} \int_0^T\int_\Omega  \rho^\dag |q|^2 \, \rd x\, \rd t\,;\ \p_t \rho + \ddiv \, q = 0\ \text{with}\  \rho(T) = 1 \Big\}\,,
\end{align}
which closely relates to the ballistic transport problem \cite{barton2019dynamic}. In view of \eqref{dual1} and \eqref{dual2}, one may regard 
\begin{equation} \label{matrixctn}
\p_t G + 2 (\ms{L}^* \circ \ms{P})\,q = 0    
\end{equation}
as a matricial continuity equation, and our model \eqref{eq:abs_ctn_eq} can be hence viewed as an unbalanced variant of \eqref{matrixctn}. Then, the conservativity condition $\ms{D}^*(I) = 0$ for \eqref{matrixctn} is simply $\ms{P} \circ \ms{L}(I)  = 
0$, which has been used to guarantee the existence of a measure-valued solution to \eqref{dual1}; see \cite[Theorem 4.6]{vorotnikov2022partial}. Thanks to the above observations, one may expect that each meaningful choice of $\ms{L}$ and $\ms{P}$ in \cite[Section 6]{vorotnikov2022partial} can 
generate a reasonable distance \eqref{eq:distance} with $\ms{D} = 2 (\ms{L}^* \circ \ms{P})$. For instance, setting $n = d$, $\ms{P} = I$, and $\ms{L} = - \ddiv - \frac{1}{2} \na \tr$ in \eqref{eq:abseular} gives the template matching equation $\p_t v + \ddiv\, (v \otimes v) + \frac{1}{2} \na |v|^2 = 0$ and  
a distance \eqref{eq:distance} with $\ms{D} = 2 (\ms{L}^* \circ \ms{P})$:
\begin{equation}\label{eq:newmodel}
    \inf\big\{\jj_{\Lad,Q}\ms{(G,q,R)} \,;\ 
 \p_t \ms{G} + 2 \na_s \ms{q} + \ddiv \ms{q} I =  \ms{R}_t^{\sym} \ \text{with}\  \ms{G}_t|_{t = 0} = \ms{G}_0\,,\, \ms{G}_t|_{t = 1} = \ms{G}_1
\big\}\,.
\end{equation}

\begin{remark}
An important question is how to compare these matrix-valued optimal transport models \eqref{def:kan_bure}, \eqref{def:chen19int}, and \eqref{eq:newmodel} (as well as others in the literature), which requires a deeper theoretical analysis and is completely open, to the best of our knowledge. 
\end{remark}



\section{Concluding remarks}
We have proposed a general class of unbalanced matrix-valued optimal transport distances ${\rm WB}_{\Lad}(\dd,\dd)$ over the space $\M(\Omega,\S^n_+)$, called the weighted Wasserstein-Bures metric. The definition relies on a dynamic formulation and convex analysis.
We have shown that $\M(\Omega,\S^n_+)$ equipped with the metric ${\rm WB}_{\Lad}(\dd,\dd)$ is a complete geodesic space, and it can be viewed as a metric cone. In the follow-up work \cite{li2020general2}, 
we have considered the convergence of 
the discrete approximation of the transport model \eqref{eq:distance}. Our results provide a unified framework for unbalanced
transport distances on matrix-valued measures and directly apply to various existing models such as 
the Kantorovich-Bures distance \eqref{def:kan_bure}, the matricial interpolation distance \eqref{def:chen19int}, and the WFR one \eqref{def:wfr_metric}. Meanwhile, it paves the way for practical applications, in particular, diffusion tensor imaging as in \cite{chen2018efficient,ryu2018vector,peyre2019quantum}.

\titleformat{\section}{\bfseries}{\appendixname~\thesection .}{0.5em}{}
\titleformat{\subsection}{\normalfont\itshape}{\thesubsection.}{0.5em}{}

\appendices

\section{Auxiliary proofs} \label{app_B}



\begin{proof} [Proof of Lemma \ref{lem: conj_cvx}]
    For $\mu \in \M(\mathcal{X},\xx)$, by definition,      
    we have 
   $
        \iota_{C(\mathcal{X},\mathcal{O}_\Lad)}^*(\mu) = \sup\{\l \mu, \Xi\r_{\mathcal{X}}\,; \Xi \in C(\mathcal{X},\mathcal{O}_\Lad) \}\,.
  $
To show that the admissible set $C(\mathcal{X},\mathcal{O}_\Lad)$ can be relaxed to 
$L^\infty_{|\mu|} (\mathcal{X},\mathcal{O}_\Lad)$, it suffices to prove 
\begin{equation} \label{inproof:claim}
    \sup_{ \Xi \in L_{|\mu|}^\infty(\mathcal{X},\mathcal{O}_\Lad)}\l \mu, \Xi \r_{\mathcal{X}} \le  \sup_{ \Xi \in C(\mathcal{X},\mathcal{O}_\Lad)}\l \mu, \Xi\r_{\mathcal{X}}\,.
\end{equation}
For this, we consider an essentially bounded measurable field $\Xi \in L_{|\mu|}^\infty(\mathcal{X},\mathcal{O}_\Lad)$. Without loss of generality,
we assume that it is bounded by $\norm{\Xi}_\infty$ everywhere.  By Lusin's theorem, for any $\ep > 0$, there exists a continuous field with compact support $\w{\Xi}$ such that 
    \begin{equation} \label{eq:lusinset_est}
        |\mu|(\{x \in \mathcal{X}\,; \ \Xi(x) \neq \w{\Xi}(x) \}) \le \ep\,.
    \end{equation}
Define $\P_{\mathcal{O}_\Lad}$ as the $L^2$-projection from $\xx$ to the closed convex set $\mathcal{O}_\Lad$. By abuse of notation, we still denote by $\w{\Xi}$ the composite function $\P_{\mathcal{O}_\Lad} \circ \w{\Xi} \in C(\mathcal{X},\mathcal{O}_\Lad)$. It is clear that  $\norm{\w{\Xi}}_\infty \le  \norm{\Xi}_\infty$, and \eqref{eq:lusinset_est} still holds.
Then it follows that 
   $
        | \l \mu, \Xi\r_{\mathcal{X}} - \l \mu, \w{\Xi} \r_{\mathcal{X}}| \le  2 \ep \norm{\Xi}_\infty\,,
   $
    which further implies 
    \begin{equation*}
        \l \mu, \Xi\r_{\mathcal{X}} \le \l \mu, \w{\Xi}\r_{\mathcal{X}} + 2 \ep \norm{\Xi}_\infty \le  \sup_{ \Xi \in C(\mathcal{X},\mathcal{O}_\Lad)}\l \mu, \Xi\r_{\mathcal{X}} + 2 \ep \norm{\Xi}_\infty \,.
    \end{equation*}
    Since $\ep$ is arbitrary, we have proved the claim \eqref{inproof:claim}. Thus, we can take the pointwise $\sup$ in \eqref{eq:conj_gmu} and obtain the desired $\iota_{C(\mathcal{X},\mathcal{O}_\Lad)}^*(\mu) = \jj_{\Lad,\mathcal{X}}(\mu)$ by Proposition \ref{prop:subgrad_J}. Next, we characterize the subgradient $\p \jj_{\Lad,\mathcal{X}}(\mu)$. By Lemma \ref{lem:subgrad_relation}, we have 
      $\Xi \in \p \jj_{\Lad,\mathcal{X}}(\mu) \bigcap C(\mathcal{X},\xx)$ if and only if 
 $
        \l \mu, \Xi\r_{\mathcal{X}} =   \iota_{C(\mathcal{X},\mathcal{O}_\Lad)}(\Xi) + \jj_{\Lad,\mathcal{X}}(\mu) \,, 
$
    which yields $\Xi \in C(\mathcal{X},\mathcal{O}_\Lad)$ and 
    \begin{align} \label{auxeq:app_1}
        \int_{\mathcal{X}}  \mu_\lad \dd \Xi - J_\Lad(\mu_\lad)\, \rd \lad = 0  \,,
    \end{align}
     where $\lad$ is a reference measure such that $|\mu| \ll \lad$ and $\mu_\lad$ is the density of $\mu$. We  note from $J_\Lad = \iota^*_{\mathcal{O}_\Lad}$ and $\Xi(x) \in \mathcal{O}_\Lad$ that  $ \mu_\lad \dd \Xi - J_\Lad(\mu_\lad) \le 0$, $\lad$-a.e., where by \eqref{auxeq:app_1}, the equality actually holds $\lad$-a.e.. Then \eqref{eq:subg_conj_gmu} follows.
\end{proof}

\begin{proof} [Proof of Lemma \ref{thm:geodesic_space}]
It suffices to consider $[a,b] = [0,1]$. We denote by $\w{{\rm WB}}_\Lad$ the right-hand side of \eqref{eq:length_energy}. By H\"{o}lder's inequality and recalling \eqref{eq:distance} with the admissible set $\w{\ce}([0,1];\ms{G}_0,\ms{G}_1)$, we have $\w{{\rm WB}}_\Lad \le {\rm WB}_\Lad$. For the other direction, we consider $\{\mu_t\}_{t \in [0,1]} \in \w{\ce}([0,1];\ms{G}_0,\ms{G}_1)$ and reparameterize it by the $\ep$-arc length function $s = \ms{s}_\ep(t)$: 
\begin{equation*}
    s = \ms{s}_\ep(t) = \int_0^t \Big(\jj_{\Lad,\Omega}(\mu_\tau)^{1/2} + \ep \Big)\, \rd \tau: [0,1] \to [0, L(\mu_t) + \ep ]\,,
\end{equation*}
where $L(\mu_t): = \int_0^1 \jj_{\Lad,\Omega}(\mu_\tau)^{1/2}\,  \rd \tau $. It is clear that $\ms{s}_\ep(t)$ is strictly increasing and absolutely continuous and has an absolutely continuous inverse. Then,  by Lemma \ref{lemma:timescaling} and writing 
 $\w{\mu}^\ep_s = \mu_{\ms{s}_\ep^{-1}(s)}$ for short, we have 
     \begin{align} \label{auxeq:app_2}
        {\rm WB}^2_\Lad(\ms{G}_0,\ms{G}_1)  \le  (L(\mu_t) + \ep) \int^{L(\mu_t) + \ep}_0 \jj_{\Lad,\Omega}(\w{\mu}^\ep_s)\, \rd s = (L(\mu_t) + \ep) \int^1_0 \frac{\jj_{\Lad,\Omega}(\mu_t)}{\jj_{\Lad,\Omega}(\mu_t)^{1/2} + \ep}\, \rd t\,.
     \end{align}
     where the first inequality is by \eqref{eq:change_time} with $[a,b] = [0, L(\mu_t) + \ep]$. Letting $\ep \to 0$ in \eqref{auxeq:app_2}, we can find ${\rm WB}_{\Lad} \le \w{{\rm WB}}_\Lad$. 
     If we assume that $\mu$ minimizes \eqref{eq:distance}, we have 
    \begin{align*}
        {\rm WB}_\Lad(\ms{G}_0,\ms{G}_1) =  \Big(\int_0^1 \jj_{\Lad,\Omega}(\mu_t) \, \rd t \Big)^{1/2} \le \int_0^1 \jj_{\Lad,\Omega}(\mu_t)^{1/2}\, \rd t \,,
    \end{align*}  
     which implies that
     $\jj_{\Lad, \Omega}(\mu_t)$ is constant a.e..  
     Then \eqref{eq:cons_energy} immediately follows. 
\end{proof}


\begin{thebibliography}{10}

\bibitem{ambrosio2008gradient}
L.~Ambrosio, N.~Gigli, and G.~Savar{\'e}.
\newblock {\em Gradient flows: in metric spaces and in the space of probability
  measures}.
\newblock Springer Science \& Business Media, 2005.

\bibitem{ambrosio2004topics}
L.~Ambrosio and P.~Tilli.
\newblock {\em Topics on analysis in metric spaces}, volume~25.
\newblock Oxford University Press on Demand, 2004.

\bibitem{arjovsky2017wasserstein}
M.~Arjovsky, S.~Chintala, and L.~Bottou.
\newblock Wasserstein generative adversarial networks.
\newblock In {\em International conference on machine learning}, pages
  214--223. PMLR, 2017.

\bibitem{barbu2012convexity}
V.~Barbu and T.~Precupanu.
\newblock {\em Convexity and optimization in Banach spaces}.
\newblock Springer Science \& Business Media, 2012.

\bibitem{barton2019dynamic}
A.~Barton and N.~Ghoussoub.
\newblock Dynamic and stochastic propagation of the brenier optimal mass
  transport.
\newblock {\em European Journal of Applied Mathematics}, 30(6):1264--1299,
  2019.

\bibitem{bauschke2011convex}
H.~H. Bauschke, P.~L. Combettes, et~al.
\newblock {\em Convex analysis and monotone operator theory in Hilbert spaces},
  volume 408.
\newblock Springer, 2011.

\bibitem{benamou2003numerical}
J.-D. Benamou.
\newblock Numerical resolution of an “unbalanced” mass transport problem.
\newblock {\em ESAIM: Mathematical Modelling and Numerical
  Analysis-Mod{\'e}lisation Math{\'e}matique et Analyse Num{\'e}rique},
  37(5):851--868, 2003.

\bibitem{benamou2000computational}
J.-D. Benamou and Y.~Brenier.
\newblock A computational fluid mechanics solution to the monge-kantorovich
  mass transfer problem.
\newblock {\em Numerische Mathematik}, 84(3):375--393, 2000.

\bibitem{bhatia2013matrix}
R.~Bhatia.
\newblock {\em Matrix analysis}, volume 169.
\newblock Springer Science \& Business Media, 2013.

\bibitem{bhatia2019bures}
R.~Bhatia, T.~Jain, and Y.~Lim.
\newblock On the bures--wasserstein distance between positive definite
  matrices.
\newblock {\em Expositiones Mathematicae}, 37(2):165--191, 2019.

\bibitem{bouchitte2020convex}
G.~Bouchitt{\'e}.
\newblock Convex analysis and duality.
\newblock {\em arXiv preprint arXiv:2004.09330}, 2020.

\bibitem{bouchitte1988integral}
G.~Bouchitt{\'e} and M.~Valadier.
\newblock Integral representation of convex functionals on a space of measures.
\newblock {\em Journal of functional analysis}, 80(2):398--420, 1988.

\bibitem{brenier1991polar}
Y.~Brenier.
\newblock Polar factorization and monotone rearrangement of vector-valued
  functions.
\newblock {\em Communications on pure and applied mathematics}, 44(4):375--417,
  1991.

\bibitem{brenier2018initial}
Y.~Brenier.
\newblock The initial value problem for the euler equations of incompressible
  fluids viewed as a concave maximization problem.
\newblock {\em Communications in Mathematical Physics}, 364(2):579--605, 2018.

\bibitem{brenier2020optimal}
Y.~Brenier and D.~Vorotnikov.
\newblock On optimal transport of matrix-valued measures.
\newblock {\em SIAM Journal on Mathematical Analysis}, 52(3):2849--2873, 2020.

\bibitem{brezis2010functional}
H.~Brezis.
\newblock {\em Functional analysis, Sobolev spaces and partial differential
  equations}.
\newblock Springer Science \& Business Media, 2010.

\bibitem{bridson2013metric}
M.~R. Bridson and A.~Haefliger.
\newblock {\em Metric spaces of non-positive curvature}, volume 319.
\newblock Springer Science \& Business Media, 2013.

\bibitem{burago2022course}
D.~Burago, Y.~Burago, and S.~Ivanov.
\newblock {\em A course in metric geometry}, volume~33.
\newblock American Mathematical Society, 2022.

\bibitem{caffarelli2010free}
L.~A. Caffarelli and R.~J. McCann.
\newblock Free boundaries in optimal transport and monge-ampere obstacle
  problems.
\newblock {\em Annals of mathematics}, pages 673--730, 2010.

\bibitem{carlen2014analog}
E.~A. Carlen and J.~Maas.
\newblock An analog of the 2-wasserstein metric in non-commutative probability
  under which the fermionic fokker--planck equation is gradient flow for the
  entropy.
\newblock {\em Communications in mathematical physics}, 331(3):887--926, 2014.

\bibitem{carlen2017gradient}
E.~A. Carlen and J.~Maas.
\newblock Gradient flow and entropy inequalities for quantum markov semigroups
  with detailed balance.
\newblock {\em Journal of Functional Analysis}, 273(5):1810--1869, 2017.

\bibitem{chen2020matrix}
Y.~Chen, W.~Gangbo, T.~T. Georgiou, and A.~Tannenbaum.
\newblock On the matrix monge--kantorovich problem.
\newblock {\em European Journal of Applied Mathematics}, 31(4):574--600, 2020.

\bibitem{chen2017matrix}
Y.~Chen, T.~T. Georgiou, and A.~Tannenbaum.
\newblock Matrix optimal mass transport: a quantum mechanical approach.
\newblock {\em IEEE Transactions on Automatic Control}, 63(8):2612--2619, 2017.

\bibitem{chen2019interpolation}
Y.~Chen, T.~T. Georgiou, and A.~Tannenbaum.
\newblock Interpolation of matrices and matrix-valued densities: The unbalanced
  case.
\newblock {\em European Journal of Applied Mathematics}, 30(3):458--480, 2019.

\bibitem{chen2018efficient}
Y.~Chen, E.~Haber, K.~Yamamoto, T.~T. Georgiou, and A.~Tannenbaum.
\newblock An efficient algorithm for matrix-valued and vector-valued optimal
  mass transport.
\newblock {\em Journal of Scientific Computing}, 77:79--100, 2018.

\bibitem{chizat2018interpolating}
L.~Chizat, G.~Peyr{\'e}, B.~Schmitzer, and F.-X. Vialard.
\newblock An interpolating distance between optimal transport and fisher--rao
  metrics.
\newblock {\em Foundations of Computational Mathematics}, 18(1):1--44, 2018.

\bibitem{chizat2018scaling}
L.~Chizat, G.~Peyr{\'e}, B.~Schmitzer, and F.-X. Vialard.
\newblock Scaling algorithms for unbalanced optimal transport problems.
\newblock {\em Mathematics of Computation}, 87(314):2563--2609, 2018.

\bibitem{chizat2018unbalanced}
L.~Chizat, G.~Peyr{\'e}, B.~Schmitzer, and F.-X. Vialard.
\newblock Unbalanced optimal transport: Dynamic and kantorovich formulations.
\newblock {\em Journal of Functional Analysis}, 274(11):3090--3123, 2018.

\bibitem{cole2023quantum}
S.~Cole, M.~Eckstein, S.~Friedland, and K.~{\.Z}yczkowski.
\newblock On quantum optimal transport.
\newblock {\em Mathematical Physics, Analysis and Geometry}, 26(2):14, 2023.

\bibitem{datta2020relating}
N.~Datta and C.~Rouz{\'e}.
\newblock Relating relative entropy, optimal transport and fisher information:
  a quantum hwi inequality.
\newblock {\em Annales Henri Poincar{\'e}}, 21(7):2115--2150, 2020.

\bibitem{de2021quantum2}
G.~De~Palma, M.~Marvian, D.~Trevisan, and S.~Lloyd.
\newblock The quantum wasserstein distance of order 1.
\newblock {\em IEEE Transactions on Information Theory}, 67(10):6627--6643,
  2021.

\bibitem{de2021quantum}
G.~De~Palma and D.~Trevisan.
\newblock Quantum optimal transport with quantum channels.
\newblock In {\em Annales Henri Poincar{\'e}}, volume~22, pages 3199--3234.
  Springer, 2021.

\bibitem{dolbeault2009new}
J.~Dolbeault, B.~Nazaret, and G.~Savar{\'e}.
\newblock A new class of transport distances between measures.
\newblock {\em Calculus of Variations and Partial Differential Equations},
  34(2):193--231, 2009.

\bibitem{duran1997lpspace}
A.~J. Duran and P.~Lopez-Rodriguez.
\newblock The lpspace of a positive definite matrix of measures and density of
  matrix polynomials inl1.
\newblock {\em journal of approximation theory}, 90(2):299--318, 1997.

\bibitem{evans2015measure}
L.~C. Evans and R.~F. Gariepy.
\newblock {\em Measure theory and fine properties of functions}.
\newblock CRC press, 2015.

\bibitem{ferradans2014regularized}
S.~Ferradans, N.~Papadakis, G.~Peyr{\'e}, and J.-F. Aujol.
\newblock Regularized discrete optimal transport.
\newblock {\em SIAM Journal on Imaging Sciences}, 7(3):1853--1882, 2014.

\bibitem{figalli2010optimal}
A.~Figalli.
\newblock The optimal partial transport problem.
\newblock {\em Archive for rational mechanics and analysis}, 195(2):533--560,
  2010.

\bibitem{figalli2010new}
A.~Figalli and N.~Gigli.
\newblock A new transportation distance between non-negative measures, with
  applications to gradients flows with dirichlet boundary conditions.
\newblock {\em Journal de math{\'e}matiques pures et appliqu{\'e}es},
  94(2):107--130, 2010.

\bibitem{flaherty2013riemannian}
F.~Flaherty and M.~do~Carmo.
\newblock {\em Riemannian Geometry}.
\newblock Mathematics: Theory \& Applications. Birkh{\"a}user Boston, 2013.

\bibitem{fleissner2021minimizing}
F.~C. Fleissner.
\newblock A minimizing movement approach to a class of scalar
  reaction--diffusion equations.
\newblock {\em ESAIM: Control, Optimisation and Calculus of Variations}, 27:18,
  2021.

\bibitem{folland1999real}
G.~B. Folland.
\newblock {\em Real analysis: modern techniques and their applications},
  volume~40.
\newblock John Wiley \& Sons, 1999.

\bibitem{frogner2015learning}
C.~Frogner, C.~Zhang, H.~Mobahi, M.~Araya, and T.~A. Poggio.
\newblock Learning with a wasserstein loss.
\newblock {\em Advances in neural information processing systems}, 28, 2015.

\bibitem{gallouet2017jko}
T.~O. Gallou{\"e}t and L.~Monsaingeon.
\newblock A jko splitting scheme for kantorovich--fisher--rao gradient flows.
\newblock {\em SIAM Journal on Mathematical Analysis}, 49(2):1100--1130, 2017.

\bibitem{golse2016mean}
F.~Golse, C.~Mouhot, and T.~Paul.
\newblock On the mean field and classical limits of quantum mechanics.
\newblock {\em Communications in Mathematical Physics}, 343:165--205, 2016.

\bibitem{golse2017schrodinger}
F.~Golse and T.~Paul.
\newblock The schr{\"o}dinger equation in the mean-field and semiclassical
  regime.
\newblock {\em Archive for Rational Mechanics and Analysis}, 223:57--94, 2017.

\bibitem{golse2018wave}
F.~Golse and T.~Paul.
\newblock Wave packets and the quadratic monge--kantorovich distance in quantum
  mechanics.
\newblock {\em Comptes Rendus Mathematique}, 356(2):177--197, 2018.

\bibitem{gross1975hypercontractivity}
L.~Gross.
\newblock Hypercontractivity and logarithmic sobolev inequalities for the
  clifford-dirichlet form.
\newblock {\em Duke Mathematical Journal}, 42(3):383--396, 1975.

\bibitem{guittet2002extended}
K.~Guittet.
\newblock Extended kantorovich norms: a tool for optimization.
\newblock {\em Technical Report 4402}, 2002.

\bibitem{hanin1999extension}
L.~G. Hanin.
\newblock An extension of the kantorovich norm.
\newblock {\em Contemporary Mathematics}, 226:113--130, 1999.

\bibitem{jordan1998variational}
R.~Jordan, D.~Kinderlehrer, and F.~Otto.
\newblock The variational formulation of the fokker--planck equation.
\newblock {\em SIAM journal on mathematical analysis}, 29(1):1--17, 1998.

\bibitem{kantorovich1942translocation}
L.~V. Kantorovich.
\newblock On the translocation of masses.
\newblock In {\em Dokl. Akad. Nauk. USSR (NS)}, volume~37, pages 199--201,
  1942.

\bibitem{kantorovich1957functional}
L.~V. Kantorovich and G.~S. Rubinshtein.
\newblock On a functional space and certain extremum problems.
\newblock {\em Doklady Akademii Nauk}, 115(6):1058--1061, 1957.

\bibitem{kantorovich1958space}
L.~V. Kantorovich and S.~Rubinshtein.
\newblock On a space of totally additive functions.
\newblock {\em Vestnik of the St. Petersburg University: Mathematics},
  13(7):52--59, 1958.

\bibitem{kastoryano2013quantum}
M.~J. Kastoryano and K.~Temme.
\newblock Quantum logarithmic sobolev inequalities and rapid mixing.
\newblock {\em Journal of Mathematical Physics}, 54(5):052202, 2013.

\bibitem{kondratyev2016new}
S.~Kondratyev, L.~Monsaingeon, D.~Vorotnikov, et~al.
\newblock A new optimal transport distance on the space of finite radon
  measures.
\newblock {\em Advances in Differential Equations}, 21(11/12):1117--1164, 2016.

\bibitem{kondratyev2019spherical}
S.~Kondratyev and D.~Vorotnikov.
\newblock Spherical hellinger--kantorovich gradient flows.
\newblock {\em SIAM Journal on Mathematical Analysis}, 51(3):2053--2084, 2019.

\bibitem{kondratyev2020convex}
S.~Kondratyev and D.~Vorotnikov.
\newblock Convex sobolev inequalities related to unbalanced optimal transport.
\newblock {\em Journal of Differential Equations}, 268(7):3705--3724, 2020.

\bibitem{kondratyev2020nonlinear}
S.~Kondratyev and D.~Vorotnikov.
\newblock Nonlinear fokker-planck equations with reaction as gradient flows of
  the free energy.
\newblock {\em Journal of Functional Analysis}, 278(2):108310, 2020.

\bibitem{laschos2019geometric}
V.~Laschos and A.~Mielke.
\newblock Geometric properties of cones with applications on the
  hellinger--kantorovich space, and a new distance on the space of probability
  measures.
\newblock {\em Journal of Functional Analysis}, 276(11):3529--3576, 2019.

\bibitem{le2014diffusion}
D.~Le~Bihan.
\newblock Diffusion mri: what water tells us about the brain.
\newblock {\em EMBO molecular medicine}, 6(5):569--573, 2014.

\bibitem{li2022interpolation}
B.~Li and J.~Lu.
\newblock Interpolation between modified logarithmic sobolev and poincare
  inequalities for quantum markovian dynamics.
\newblock {\em Journal of Statistical Physics}, 2023.

\bibitem{li2020general2}
B.~Li and J.~Zou.
\newblock On the convergence of discrete dynamic unbalanced transport models.
\newblock {\em arXiv preprint arXiv:2310.09420}, 2023.

\bibitem{liero2016optimal}
M.~Liero, A.~Mielke, and G.~Savar{\'e}.
\newblock Optimal transport in competition with reaction: The
  hellinger--kantorovich distance and geodesic curves.
\newblock {\em SIAM Journal on Mathematical Analysis}, 48(4):2869--2911, 2016.

\bibitem{liero2018optimal}
M.~Liero, A.~Mielke, and G.~Savar{\'e}.
\newblock Optimal entropy-transport problems and a new hellinger--kantorovich
  distance between positive measures.
\newblock {\em Inventiones mathematicae}, 211(3):969--1117, 2018.

\bibitem{lombardi2015eulerian}
D.~Lombardi and E.~Maitre.
\newblock Eulerian models and algorithms for unbalanced optimal transport.
\newblock {\em ESAIM: Mathematical Modelling and Numerical
  Analysis-Mod{\'e}lisation Math{\'e}matique et Analyse Num{\'e}rique},
  49(6):1717--1744, 2015.

\bibitem{lombardini2022obstructions}
L.~Lombardini and F.~Rossi.
\newblock Obstructions to extension of wasserstein distances for variable
  masses.
\newblock {\em Proceedings of the American Mathematical Society},
  150(11):4879--4890, 2022.

\bibitem{lott2009ricci}
J.~Lott and C.~Villani.
\newblock Ricci curvature for metric-measure spaces via optimal transport.
\newblock {\em Annals of Mathematics}, pages 903--991, 2009.

\bibitem{maas2015generalized}
J.~Maas, M.~Rumpf, C.~Sch{\"o}nlieb, and S.~Simon.
\newblock A generalized model for optimal transport of images including
  dissipation and density modulation.
\newblock {\em ESAIM: Mathematical Modelling and Numerical Analysis},
  49(6):1745--1769, 2015.

\bibitem{mccann1997convexity}
R.~J. McCann.
\newblock A convexity principle for interacting gases.
\newblock {\em Advances in mathematics}, 128(1):153--179, 1997.

\bibitem{monge1781memoire}
G.~Monge.
\newblock M{\'e}moire sur la th{\'e}orie des d{\'e}blais et des remblais.
\newblock {\em Histoire de l'Acad{\'e}mie Royale des Sciences de Paris}, 1781.

\bibitem{monsaingeon2020schr}
L.~Monsaingeon and D.~Vorotnikov.
\newblock The schr{\"o}dinger problem on the non-commutative fisher-rao space.
\newblock {\em Calculus of Variations and Partial Differential Equations},
  60(1):14, 2021.

\bibitem{olkiewicz1999hypercontractivity}
R.~Olkiewicz and B.~Zegarlinski.
\newblock Hypercontractivity in noncommutative $l_p$ spaces.
\newblock {\em Journal of functional analysis}, 161(1):246--285, 1999.

\bibitem{otto2001geometry}
F.~Otto.
\newblock The geometry of dissipative evolution equations: the porous medium
  equation.
\newblock {\em Communications in Partial Differential Equations}, 26(1-2),
  2001.

\bibitem{otto2000generalization}
F.~Otto and C.~Villani.
\newblock Generalization of an inequality by talagrand and links with the
  logarithmic sobolev inequality.
\newblock {\em Journal of Functional Analysis}, 173(2):361--400, 2000.

\bibitem{peyre2019quantum}
G.~Peyr{\'e}, L.~Chizat, F.-X. Vialard, and J.~Solomon.
\newblock Quantum entropic regularization of matrix-valued optimal transport.
\newblock {\em European Journal of Applied Mathematics}, 30(6):1079--1102,
  2019.

\bibitem{piccoli2014generalized}
B.~Piccoli and F.~Rossi.
\newblock Generalized wasserstein distance and its application to transport
  equations with source.
\newblock {\em Archive for Rational Mechanics and Analysis}, 211(1):335--358,
  2014.

\bibitem{piccoli2016properties}
B.~Piccoli and F.~Rossi.
\newblock On properties of the generalized wasserstein distance.
\newblock {\em Archive for Rational Mechanics and Analysis}, 222(3):1339--1365,
  2016.

\bibitem{powers1970free}
R.~T. Powers and E.~St{\o}rmer.
\newblock Free states of the canonical anticommutation relations.
\newblock {\em Communications in Mathematical Physics}, 16(1):1--33, 1970.

\bibitem{reid1970some}
W.~T. Reid.
\newblock Some elementary properties of proper values and proper vectors of
  matrix functions.
\newblock {\em SIAM Journal on Applied Mathematics}, 18(2):259--266, 1970.

\bibitem{robertson1968decomposition}
J.~B. Robertson, M.~Rosenberg, et~al.
\newblock The decomposition of matrix-valued measures.
\newblock {\em The Michigan Mathematical Journal}, 15(3):353--368, 1968.

\bibitem{rockafellar1971integrals}
R.~Rockafellar.
\newblock Integrals which are convex functionals. ii.
\newblock {\em Pacific Journal of Mathematics}, 39(2):439--469, 1971.

\bibitem{rouze2019concentration}
C.~Rouz{\'e} and N.~Datta.
\newblock Concentration of quantum states from quantum functional and
  transportation cost inequalities.
\newblock {\em Journal of Mathematical Physics}, 60(1):012202, 2019.

\bibitem{rudin2006real}
W.~Rudin.
\newblock {\em Real and complex analysis}.
\newblock Tata McGraw-hill education, 2006.

\bibitem{ryu2018vector}
E.~K. Ryu, Y.~Chen, W.~Li, and S.~Osher.
\newblock Vector and matrix optimal mass transport: theory, algorithm, and
  applications.
\newblock {\em SIAM Journal on Scientific Computing}, 40(5):A3675--A3698, 2018.

\bibitem{santambrogio2015optimal}
F.~Santambrogio.
\newblock Optimal transport for applied mathematicians.
\newblock {\em Birk{\"a}user, NY}, 55(58-63):94, 2015.

\bibitem{sturm2006geometry}
K.-T. Sturm.
\newblock On the geometry of metric measure spaces {I} and {II}.
\newblock {\em Acta mathematica}, 196(1):65--177, 2006.

\bibitem{villani2003topics}
C.~Villani.
\newblock {\em Topics in optimal transportation}.
\newblock Number~58 in Graduate Studies in Mathematics. American Mathematical
  Soc., 2003.

\bibitem{villani2008optimal}
C.~Villani.
\newblock {\em Optimal transport: old and new}, volume 338.
\newblock Springer Science \& Business Media, 2008.

\bibitem{vorotnikov2022partial}
D.~Vorotnikov.
\newblock Partial differential equations with quadratic nonlinearities viewed
  as matrix-valued optimal ballistic transport problems.
\newblock {\em Archive for Rational Mechanics and Analysis}, 243(3):1653--1698,
  2022.

\bibitem{wandell2016clarifying}
B.~A. Wandell.
\newblock Clarifying human white matter.
\newblock {\em Annual review of neuroscience}, 39, 2016.

\bibitem{wirth2021curvature}
M.~Wirth and H.~Zhang.
\newblock Curvature-dimension conditions for symmetric quantum markov
  semigroups.
\newblock {\em Annales Henri Poincar{\'e}}, pages 1--34, 2022.

\end{thebibliography}
\end{document}